 \numberwithin{equation}{section}
\numberwithin{figure}{section}
\numberwithin{table}{section}
\newcommand{\R}{\mathbb{R}}
\newcommand{\N}{\mathbb{N}}
\newcommand\Z{\mathbb{Z}}
\newcommand{\cC}{\mathcal{C}}
\newcommand{\cN}{\mathcal{N}}
\newcommand{\cR}{\mathcal{R}}
\renewcommand{\div}{{\rm div}\,}
\newcommand{\da}{\delta\! a}
\newcommand{\dn}{\delta\! n}
\newcommand{\dr}{\delta\! r}
\newcommand{\du}{\delta\! u}
\newcommand{\dX}{\delta\! X}
\newcommand{\dH}{\delta\! H}
\newcommand{\ep}{\varepsilon}
\renewcommand{\.}{\;\!}
\def\cC{{\mathcal C}}
\def\cF{{\mathcal F}}
\def\cH{{\mathcal H}}
\def\cL{{\mathcal L}}
\def\cP{{\mathcal P}}
\def\cS{{\mathcal S}}
\def\Id{\hbox{\rm Id}}
\renewcommand{\div}{\mbox{\rm div}\;\!}
\def\ddj{\dot\Delta_j}
\def\wh{\widehat}
\def\wt{\widetilde}
\newcommand{\Sum}{\displaystyle \sum}
\newcommand{\with}{\quad\hbox{with}\quad}
\newcommand{\andf}{\quad\hbox{and}\quad}
\newcommand{\f}
{\mathfrak f \,\!}
\newtheorem{thm}{Theorem}[section]
\newtheorem{lem}{Lemma}[section]
\newtheorem{rmk}{Remark}
\title[Euler system with nonlocal pressure]{The compressible Euler system with nonlocal pressure: global existence and relaxation}
\author{Raphael Danchin \& Piotr Bogusław Mucha }
\address{R. Danchin: Univ Paris Est Creteil, Univ Gustave Eiffel, CNRS, LAMA UMR8050, F-94010 Creteil, France}
\address{P.B. Mucha: Institute of Applied Mathematics and Mechanics, University of Warsaw, Banacha 2, 02-097 Warsaw, Poland}
\date{}
\begin{document}

\maketitle

\begin{abstract}   
 We here investigate  a modification of the compressible barotropic Euler system
with friction, involving a fuzzy nonlocal pressure term in place of the conventional one. This nonlocal term is parameterized by $\ep > 0$ and formally 
tends to the classical pressure when $\ep$ approaches zero.
The central challenge is to establish that this system is a reliable approximation of the classical compressible Euler system.
We establish the global existence and uniqueness of regular solutions 
in the neighborhood of the static state with density  $1$ and  null velocity. Our results are demonstrated independently of the parameter $\ep,$ which 
enable us to prove the  convergence of solutions 
to those of the classical Euler system.  Another consequence is 
the rigorous justification of the convergence  of the mass equation 
to various versions of the porous media equation 
 in the asymptotic limit where the friction tends to infinity.
Note that our results are demonstrated in the whole space, which 
 necessitates  to use the   $L^1(\mathbb{R}_+;\dot B^\sigma_{2,1}(\R^d))$ 
 spaces framework.
\end{abstract}

\vskip1cm

{\small 

\noindent
{\sc MSC:}{  35Q35, 76N10 }

\noindent
{\sc Key words:} { damped Euler system, nonlocal pressure, aggregation, almost hyperbolic systems, Besov spaces, time integrability, regular solutions, relaxation, porous media equation.}

}

\section{Introduction}

The phenomena of collective behavior are at the crossroads of various scientific disciplines and are currently the subject of active research.
They find their roots in diverse fields such as sociology, biology, and classical physics \cite{CS,OPA,TBD}. At the microscale level, these phenomena  are often described by simple Ordinary Differential Equations, as in e.g. the $N$-body problem. However, when the number of agents or particles becomes prohibitively large, such naive descriptions prove to be ineffective. Consequently, at the macroscale, it becomes suitable to adopt a hydrodynamical approach to model and understand these complex systems \cite{CCTT,DMPW,MO}.

This paper delves into the analysis of a modified version of the classical compressible Euler system, incorporating a nonlocal force designed to induce mass alignment among the constituent elements. 
This modification consists in replacing the classical pressure term by a non-local fuzzy approximation, which is designed to model the communication of each particle/agent with other particles located in a non-trivial neighborhood.

\medbreak
More precisely, we are concerned with the following class of systems in the whole space $\R^d$:
\begin{equation}\label{Kepsilon}
  \left\{  \begin{array}{l}
         \rho_t + \div (\rho u)=0 \\[1ex]
         \rho u_t + \rho u \cdot \nabla u  + \f \rho u = -\rho \nabla K_\ep \ast \rho.
    \end{array}\right.
\end{equation}
Above, $\rho=\rho(t,x)\in\R_+$ and $u=u(t,x)\in\R^d$ denote the
density and velocity functions of the studied ``matter'', respectively.
The positive real number $\f$ is the friction coefficient
and the family of smooth potentials $(K_\ep)_{\ep>0}$ is assumed to tend 
to  the Dirac measure at $0,$ when $\ep$ goes to $0.$ The convolution in the 
right-hand side of \eqref{Kepsilon} is taken with respect to space variables. 
Hence formally, in the limit, we obtain the 
following compressible Euler system with friction:
\begin{equation}\label{Euler}\left\{\begin{aligned}
&\rho_t + \div (\rho u)=0, \\
&\rho u_t + \rho u \cdot \nabla u  + \f \rho u +\frac 12 \nabla  \rho^2=0.\end{aligned}\right.
\end{equation}

Our primary goal is to establish the global well-posedness
of System \eqref{Kepsilon} supplemented with initial data
$(\rho_0,u_0)$ which are perturbations of the constant solution $(\rho,u)=(1,0).$
Because the nonlocal term $\nabla K_\ep\ast \rho$ is rather smooth, 
proving local well-posedness results in the case of sufficiently smooth data
bounded away from zero  presents no particular difficulty.  
Indeed,  the velocity satisfies  a damped Burgers equation with a smooth source term, that 
can be considered independently  of the density equation.
In this way however, it is  difficult to prove the 
global existence since, typically, the source term $\nabla K_\ep\ast\rho,$ albeit smooth,  causes a linear growth 
of $L^1$-in-time norms of $\nabla u.$ Back to the transport equation, it is thus impossible to get uniform bounds in time for the density, and thus to close the estimates for all positive time.  Likewise, getting a control independent of $\ep$ in this way is hopeless.
\smallbreak
The main difficulty is that our system  does not enter in the classical theory of hyperbolic equations. 
Even  for fixed values of parameters $\f$ and $\ep$, a nonstandard approach is necessitated. The main points of our analysis (that is also valid for more  general pressure functions  than 
$P(\rho)=\rho^2/2$) are the following:

\smallbreak

\textbullet \ 
A fundamental challenge arises from the essential requirement of $L^1$ time integrability  for $\nabla u,$ that is,
\begin{equation}\label{eq:Lip}
\int_0^\infty \| \nabla u \|_{L^\infty}\,  dt < \infty.
\end{equation}
This is the key to controlling for all time the transport
terms of  \eqref{Kepsilon},   namely  $u \cdot \nabla \rho$ and $u\cdot \nabla u.$ 
Given the hyperbolic nature of the system, \eqref{eq:Lip} can only be achieved thanks to the dissipative term $\f \rho u$. 
In the whole space context, there exists no inherent mechanism to induce rapid temporal decay (we shall showcase  below that there is
no `spectral' gap for the linearized system).  
A way to overcome  the difficulty is to use  the framework of homogeneous Besov  spaces of type $\dot{B}^s_{2,1}(\mathbb{R}^d)$. Here, the factor `1' 
will enable us  to attain the  $L^1$ integrability over time, while
the factor `2'  reflects the fact that our framework is  related to the $L^2$ space, in keeping with the quasilinear hyperbolic nature of the system.

\smallbreak

\textbullet \
In order to achieve global results with  some uniformity with respect to $\ep,$ the mathematical analysis is   subtle. 
In fact, instead of helping, the smoothing kernel 
$K_\ep$ destroys the nice partially dissipative symmetrizable structure
of \eqref{Euler}. The so-called Shizuta-Kawashima condition (first pointed
out in \cite{Ka84}) is not satisfied, and the more modern approach 
of Beauchard-Zuazua \cite{BZ} (revisited in \cite{CBD2,D-rev})
cannot be used as is. 
Compared to \eqref{Euler}, the difficulty is that the operator 
$\nabla K_\ep$ provides less dissipation than the full gradient, as may be 
 already observed  on the following linearization of \eqref{Kepsilon}:
\begin{equation}\label{eq:eulerlin}
\left\{  \begin{array}{l}   a_t + \div u=0, \\[1ex]
         u_t +  u  +  \nabla K_\ep a=0.
    \end{array}\right.     
\end{equation}
In Fourier variables, the matrix of the system reads
reads $$\begin{pmatrix}0&i\xi\\i\xi^T\wh K_\ep(\xi)&1\end{pmatrix},\qquad \xi\in\R^d.
$$
The eigenvalues are $1$ with multiplicity $d-1$ (incompressible part of $u$)
and: 
\begin{itemize}
\item[] $\lambda^\pm(\xi)=\frac12\Bigl(1\pm\sqrt{1-4|\xi|^2\wh K_\ep(\xi)}\Bigr)\:$ if $\:4|\xi|^2\wh K_\ep(\xi)\leq1$;
\smallbreak
\item[] $\lambda^\pm(\xi)=\frac12\Bigl(1\pm i\sqrt{4|\xi|^2\wh K_\ep(\xi)-1}\Bigr)\:$ if $\:4|\xi|^2\wh K_\ep(\xi)\geq1.$
\end{itemize}\smallbreak
The Euler situation corresponds to $\ep=0,$ that is $\wh K_0\equiv 1.$
We then have two distinct regimes: low frequencies with one parabolic mode
and $d$ damped modes, and high frequencies with only damped modes. 

If $\ep>0,$ then the regime where $4|\xi|^2\wh K_\ep(\xi)<1$
is likely to include arbitrarily high  frequencies, since  the functions $\wh K_\ep$
that we will consider here have 
algebraic decay at  $\infty.$ Furthermore, for small values of $|\xi|^2\wh K_\ep(\xi)$
we  have $\lambda^-(\xi)\simeq |\xi|^2\wh K_\ep(\xi)$
that is, a \emph{degenerate} parabolic mode. 
A key observation is that in this regime the combination $w:=u+\nabla K_\ep a$ 
(often referred to in this article as the `damped mode')
tends to undergo an exponential dissipation.
\smallbreak 

\textbullet \
An essential requirement in our study is the establishment of uniform dependence on the parameter $\ep$. This is clearly needed
for justifying rigorously the convergence to the Euler system 
\eqref{Euler} in the asymptotics $\ep\to0.$

Leveraging energy-based techniques, 
we succeed in  controlling the essential quantities required for our analysis,  uniformly as $\ep \to 0$. This enables us to precisely determine the diffusive limit of our system. It is worth noting that our approach and functional framework
for solving 
\eqref{Kepsilon} is inspired by the recent  paper \cite{CBD2}. 
However, the loss of symmetry caused by the 
kernel $K_\ep$ will entail a number of difficulties 
that will be described in detail in the next section. 
For older global existence results concerning 
System \eqref{Euler} and the relaxation limit, the reader may consult \cite{CG,JR,XK1,XK2}.

\smallbreak

\textbullet \
To recover the optimal 
information coming from the basic spectral analysis that we performed 
above for \eqref{eq:eulerlin}, it is convenient 
to localize the system by means of a dyadic decomposition in the Fourier space
(the so-called Littlewood-Paley decomposition) then to implement the method 
that was used in \cite{CBD2,CBD3} for \eqref{Euler}. 
There is one more difficulty: in the process, in order to compensate the loss
of symmetry with respect to $(a,u),$ 
one has somehow to look at $K_\ep a$ as an `independent' unknown. 
This  leads us to consider commutators of nonlinear terms with $K_\ep.$ A central objective lies in the meticulous control of these commutators, \emph{uniformly with respect to $\ep$.}
In this endeavor, we have  to extend the techniques delineated in \cite[Chap. 2]{BCD} to accommodate more intricate scenarios, wherein paraproduct operator and expansion techniques become indispensable for addressing higher-order terms.
Here, the key is to use a  Taylor expansion at order two;  
which necessitates  a control 
of $\|\nabla^2 u\|_{L^\infty}$. This leads us to use a dual level of regularity
while, for the classical compressible  Euler system, 
it is enough to control $\|\nabla u\|_{L^\infty},$
and thus to use only one level of regularity.  

\smallskip 

\textbullet \
The last part of our study concerns the relaxation limit  $\f \to \infty$.
A distinctive feature of our functional setting is that it allows 
to deduce the general case $\f>0$ from 
the particular case $\f=1$ by mere rescaling, provided parameter $\ep$ has been suitably modified. Then, the key to 
proving the strong convergence is to look at  $w$ defined above as the beneficial and dissipative component of our system\footnote{Here we can  draw an analogy with our use
in \cite{DM-adv} of \emph{the effective viscous flux} for  viscous compressible flows, so as to  justify the convergence to the inhomogeneous incompressible Navier-Stokes equations.}. 

Depending on the type of asymptotics we are looking at, we will justify rigorously the transition to porous media type equations, namely:
\begin{equation*}
\partial_t r - \div  (r \nabla K_\ep r) = 0 \ \hbox{ or }\  \partial_t n - \div (n\nabla n) = 0.
\end{equation*}
It is noteworthy that when $\ep > 0$, the resulting equation corresponds to some degenerate porous media equation, with no parabolic  smoothing-out effect.


\section{Derivation  from the particle system}

In order to have a better understanding of the model presented  in the introduction, let us delve into the interactions occurring among particles at the microscopic level. 
We therefore  look at  second-order agent models in their general formulation:
consider  a set of  $N$ identical particles, each of which is identified by the index $k$, ranging from $1$ to $N$. At any time $t$, particle $k$ occupies the position  $x_k(t)$ and moves with an instantaneous velocity $v_k(t)$.

In our analysis, we make the underlying assumption that communication between these particles solely depends on aggregation-repulsion effects, contingent upon the positions of the agents. Furthermore, we incorporate frictional effects into the model to govern and ensure the system's stability.
Consequently, denoting by $\f$ the (nonnegative) friction coefficient, the temporal evolution of both position $\{x_k\}$ and velocity $\{v_k\}$ for each particle, where $k$ spans the values from $1$ to $N$,
 is governed by the following system of equations:
\begin{equation}\label{eq:boltz}
    \left\{\begin{array}{l}
        \displaystyle \dot x_k= v_k  \\[2pt]
        \displaystyle \dot v_k = -\f  v_k -\frac{1}{N} \sum_{l\in \{1,\cdots,N\}} \nabla K_\ep (x_k-x_l).
    \end{array}\right.
\end{equation}
%
Changing the scale from micro to macro setting, jumping over the kinetic formulation,
leads to System \eqref{Kepsilon} (see details in Appendix).
%
Then, assuming that  $K_\ep \to \delta$ as $\ep \to 0,$ we formally 
obtain $\rho \,\nabla K_\ep \ast \rho \to \frac{1}{2} \nabla \rho^2,$ and thus  
the Euler system \eqref{Euler}.


A simple example of a family of potentials $(K_\ep)_{\ep>0}$ can be built from 
 the characteristic function of the ball, namely we set
 (for a suitable normalization constant $c_d$):
$$
    K_\ep(x):=c_d\,\ep^{-d}(1-\ep^{-1}|x|)\chi_{B(0,\ep)}(x)
\ \hbox{ so that }\ 
    \nabla K_\ep = c_d\,\ep^{-d-1} \chi_{B(0,\ep)} \frac{x}{|x|}\cdotp
$$
To better understand the effects modelled by this potential,  let us concentrate on the mono-dimensional case. Then
$$
    K'_\ep \ast \rho(x) =
    \ep^{-2}\int_{|z|\leq \ep} \frac{z}{|z|} \rho(x-z)\,dz= 
    \ep^{-2}\int_{|z|\leq \ep} {\rm sgn}(z) (\rho(x-z) -\rho(x))\, dz.
$$
We observe that the force term arising from
the integral on the right-hand side of the equation stems from the 
necessity of maintaining mass balance over the intervals $(-\ep, 0)$ and $(0, \ep)$. For
multidimensional systems, while the weightings may become somewhat more intricate, 
the underlying mechanism remains fundamentally unchanged.
To gain a visual insight into the impact of this nonlocal term, 
the reader may pay attention to  Fig. \ref{fig:1} below. \nobreak
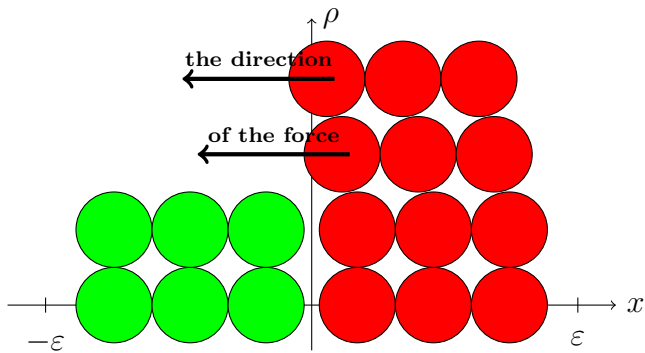
\begin{figure}[ht] 
  \centering
\begin{tikzpicture}
  \draw[->] (-4.0,0) -- (4,0) node[right] {$x$};
  \draw[->] (0,-0.6) -- (0,3.8) node[right] {$\rho$};
 \draw[-] (-3.5,-0.1) -- (-3.5,0.1);
 \node at (-3.5,-0.2) [below] {$-\varepsilon$};
\draw[-] (3.5,-0.1) -- (3.5,0.1);
  \node at (3.5,-0.2) [below] {$\varepsilon$};
  \foreach \x/\y in { -2.6/0, -1.6/0, -0.6/0, 
  -2.6/1, -1.6/1, -0.6/1} {
    \draw[fill=green] (\x,\y) circle (0.5);
  }
  \foreach \x/\y in {0.6/0, 1.6/0, 2.6/0,  
  0.6/1, 1.6/1, 2.6/1,  
  0.4/2, 1.4/2, 2.4/2, 
  0.2/3, 1.2/3, 2.2/3} {
    \draw[fill=red] (\x,\y) circle (0.5);
  }
  \draw[->, ultra thick] (0.5,2) -- (-1.5,2) node[midway, above] {\tiny \bf of the force};
  \draw[->, ultra thick] (0.3,3) -- (-1.7,3) node[midway, above] {{\tiny \bf the direction }};
\end{tikzpicture}
\caption{The mass contributed by the green balls on the segment $(-\ep, 0)$ exerts a comparatively lesser influence compared to that of the red balls located in the segment $(0, \ep)$. Consequently, the resultant force is oriented 
on the left. 
}
\label{fig:1}
\end{figure}

\goodbreak
The salient points of this analysis are valid in the  specific case
where the pressure is  of the form $P(\rho) \sim \rho^2$. To 
achieve more  general barotropic constitutive relations, one can introduce the density-induced communication protocol  of \cite{MT,MMP}. 
In that case we assume the communication between $k$-th and $l$-th agent to be 
of the form 
     ${\mathcal N}(K_\ep \ast \rho ) \nabla K_\ep (x_k-x_l),$
for some given function $\mathcal{N}$. 
At the level of the particle system,  ${\mathcal N}(K_\ep 
\ast \rho)$ measures the mass/number of particles in some vicinity of the agent $x_k$. At the hydrodynamical level, the convolution $K_\ep \ast \rho$ describes an average value of the density function $\rho$ in the vicinity of the examined point. Accordingly, the inclusion of the $\mathcal{N}$ factor serves to augment or diminish the influence of communication relative to the average density in the given region. In this way, the effects showed at Fig. \ref{fig:1} are  rescaled in terms of the magnitude of the mass in the considered neighborhood
 and our model System \eqref{Kepsilon} has to be replaced with the following more general one:
\begin{equation}\label{Kepsilon-PP}
  \left\{  \begin{array}{l}
         \rho_t + \div (\rho u)=0 \\[1ex]
         u_t +  u \cdot \nabla u  + \f  u + {\mathcal N}(K_\ep \ast \rho) \nabla K_\ep \ast \rho=0.
    \end{array}\right.
\end{equation}
 For $\ep$ tending to $0,$ we formally have
\begin{equation}\label{eq:P}
  \rho  {\mathcal N}(K_\ep \ast \rho) \nabla K_\ep  \ast \rho 
    \to \rho \mathcal{N}(\rho) \nabla \rho = \nabla P(\rho)
    \with P'(\rho)=\rho \mathcal{N}(\rho).
\end{equation}
Hence,  we get  the classical barotropic Euler system
\begin{equation}\label{Euler-P}\left\{\begin{aligned}
&\rho_t + \div (\rho u)=0, \\
& u_t +  u \cdot \nabla u  + \f  u + \mathcal{N}(\rho) \nabla  \rho =0.\end{aligned}\right. 
\end{equation}
Note that the classical  pressure law  $P(\rho) = \rho^\gamma$ ($\gamma\geq1$) 
(and thus the  isentropic Euler system with friction)
may be achieved if taking   $\mathcal{N}(\rho) = \rho^{\gamma -2},$ up to a multiplicative constant.   


\section{Results}

Before presenting our main results, some
definitions, assumptions  and notation are in order. 
Let us first specify our assumptions on the family 
$(K_\ep)_{\ep>0}.$
Since our approach  bases essentially on the Fourier transform,  the convergence of $K_\ep$
to the Dirac measure can be equivalently seen  as 
    $ \widehat{K}_{\ep}  \to 1 \mbox{ locally on } \R^d$.
Our analysis requires
$\wh K_\ep$ to keep
the same order of magnitude 
inside any annulus $\{2^{j-1}\leq |\xi|\leq 2^{j+1}\} \subset \R^d$ with $j\in\Z.$ 
In fact, we shall assume throughout that 
$$K_\ep=L_\ep\ast L_\ep$$ with $L_\ep$ a  real valued function such that $\wh L_\ep$ 
is nonincreasing with range in $[0,1],$ satisfies $\wh L_\ep(0)=1$ and,  for some $\kappa>0,$ 
\begin{equation}\label{eq:Kep}
\begin{aligned}&\sup_{\ep>0} \bigl(\|L_\ep\|_{L^1}+\|z\nabla L_\ep\|_{L^1}
+\|(z\otimes z)\nabla^2L_\ep\|_{L^1}\bigr)<\infty,\\
\ \kappa \wh L_\ep(\xi)\leq& \wh L_\ep(2\xi)\leq\kappa^{-1} \wh L_\ep(\xi)
\!\andf\! \xi_k\partial_\ell\wh L_\ep(\xi)\lesssim \wh L_\ep(\xi),\quad\! 1\leq k,\ell\leq d,\  \xi\in\R^d,\ \ep>0.\end{aligned}
\end{equation}
The above condition rules out sharp spectral cut-off or Gaussian functions.
A simple example of a family $(K_\ep)_{\ep>0}$ satisfying \eqref{eq:Kep}
is  to take $\widehat{K}_\ep(\xi) = \widehat{K}(\ep \xi)$ with
\begin{equation}\label{def:Kpe}
    \widehat{K}(\xi)= (\wh L(\xi))^2\andf 
    \widehat{L}(\xi)= \frac{1}{(1+|\xi|^2)^{m/2}}\cdotp
\end{equation}
For $m>d$,  one can  show from the standard properties of Fourier transform that \eqref{eq:Kep} is indeed satisfied.
\medbreak
Let us next introduce  the 
Littlewood-Paley decomposition on which on entire
analysis is based. 
Fix  a smooth function $\phi: \R_+ \to [0,1]$ supported in $\{1/2 \leq r \leq 2\}$ such that
$$
    \sum_{k\in \mathbb{Z}} \phi(2^{-k} r) = 1 \quad\hbox{for all }\  r>0.
$$
Setting $ \varphi (\xi):= \phi(|\xi|)$ for all $\xi\in\R^d,$
one can define a homogeneous Littlewood-Paley decomposition $\{\dot \Delta_k\}_{k\in \mathbb{Z}}$
over the space $\R^d$ in the following way:
$$
    \dot \Delta_k u := \varphi(2^{-k}D)u =
    {\mathcal{F}}^{-1}(\varphi 2^{-k} \cdot) \mathcal{F}u) 
    \with iD:=(\partial_{x_1},...,\partial_{x_d}) \ \hbox{ for }\ u\in\cS'(\R^d).
$$
Homogeneous Besov `norms' are  defined as follows for all $s\in\R$ and $1\leq p,q\leq\infty$:
$$
    \|u\|_{\dot B^s_{p,q}(\R^d)}:=
    \left\|2^{sk}\|\dot \Delta_k u\|_{L^p(\R^d)} \right\|_{\ell^q(\mathbb{Z})}.
$$
Actually, as $\|P\|_{\dot B^s_{p,q}}=0$ for any polynomial on $\R^d$, in the general tempered distribution 
setting $\|\cdot\|_{\dot B^s_{p,q}}$ is just a semi-norm. 
To get around the problem, we proceed as in \cite{BCD}, adopting the following definition:
$$
    \dot B^s_{p,q}(\R^d) := \left\{ 
    u \in \mathcal{S}'_h(\R^d): \| u\|_{\dot B^s_{p,q}} < \infty\right\},
$$
where  $\mathcal{S}'_h(\R^d)$ is the set of  tempered distributions
such that for all $\theta \in C^\infty_c (\R^d)$ we have
\begin{equation}\label{cond:LF}
    \lim_{\lambda \to \infty} \theta(\lambda D)u=0
    \mbox{ \ in \ } L^\infty(\R^d).
\end{equation}
Next, in accordance with our preceding 
spectral analysis of the linear system \eqref{eq:eulerlin}, we introduce the following notation
 where the value of the small positive absolute constant $\nu_0$ will be specified later in the paper:  
\begin{eqnarray}\label{eq:fucknot}
&\|z\|^\ell_{\dot B^\sigma_{2,1}}:=\Sum_{2^j\wh L_\ep(2^j)<\nu_0} 2^{j\sigma}\|\ddj z\|_{L^2}
\andf\|z\|^h_{\dot B^\sigma_{2,1}}:=\sum_{2^j\wh L_\ep(2^j)\geq \nu_0} 2^{j\sigma}\|\ddj z\|_{L^2},\\
\label{eq:fucknotbis}
&z^\ell:=\Sum_{2^j\wh L_\ep(2^j)<\nu_0} \ddj z
\andf z^h:=\sum_{2^j\wh L_\ep(2^j)\geq \nu_0} \ddj z.
\end{eqnarray}
Note that this decomposition of frequencies 
does not quite correspond to what will be sometimes called, improperly, in the paper 
\emph{low and high frequencies.} As said before, the fact that  $2^j\wh L_\ep(2^j)<\nu_0$
does not exclude  large values of $2^j.$
\smallbreak

Let us finally introduce the functional spaces that will be used for solving \eqref{Kepsilon}:
for all $\sigma \in\R$ and kernel $K_\ep=L_\ep\star L_\ep$
satisfying \eqref{eq:Kep}, 
the  space $E^\sigma_{K_\ep}$ stands for the 
set of all pairs $(a,u)\in \cC_b(\R_+;
\dot B^{\sigma-1}_{2,1}\cap \dot B^{\sigma}_{2,1})$
such that, in addition:
\begin{multline}\label{def:E}
(\nabla u,\nabla L_\ep\star a)\in  \cC_b(\R_+;B^{\sigma}_{2,1}),
\quad
(u,\nabla u)\in L^1(\R_+;\dot B^\sigma_{2,1})\\\andf 
\int_0^t\bigl(\|(K_\ep a,\nabla K_\ep\ast a)\|^\ell_{\dot B^{\sigma+1}_{2,1}}
+\|\nabla L_\ep\ast a\|_{\dot B^{\sigma}_{2,1}}^h\bigr)d\tau<\infty.
\end{multline}
The version of  $E^\sigma_{K_\ep}$ 
corresponding to the case where $K_\ep\ast$ is the
identity operator will be considered 
for solving the Euler system \eqref{Euler}. We shall
denote it by just  $E^\sigma.$
\medbreak
We are now ready to state our main global existence result for System \eqref{Kepsilon}:
\begin{thm}\label{thm:GWP}  Assume that $d\geq2$ and consider initial data $\rho_0=1+a_0$ and $u_0$
such that  
$$u_0\in \dot B^{\frac d2}_{2,1}\cap \dot B^{\frac d2+2}_{2,1},
\quad a_0\in \dot B^{\frac d2}_{2,1}\cap \dot B^{\frac d2+1}_{2,1}\andf
\nabla^2L_\ep a_0 \in \dot B^{\frac d2}_{2,1}.$$
There exists an absolute  positive constant $\alpha_0$ such that if 
\begin{equation}\label{eq:smalldata}
\|u_0\|_{\dot B^{\frac d2+1}_{2,1}\cap \dot B^{\frac d2+2}_{2,1}}
+\|a_0\|_{\dot B^{\frac d2}_{2,1}\cap \dot B^{\frac d2+1}_{2,1}}
+\|\nabla^2L_\ep a_0 \|_{\dot B^{\frac d2}_{2,1}}\leq\alpha_0,
\end{equation}
then System \eqref{Kepsilon} with $\f=1$ supplemented with initial data 
$(\rho_0,u_0)$ admits a unique global classical
solution $(\rho,u)$ such that  $(a,u)$ with 
$a:=\rho-1$  belongs to the space $E_{K_\ep}^{\frac d2+1}$
defined in \eqref{def:E}.
Furthermore, there exists a constant $C$ independent of $\ep$
such that  for all $t\in\R_+,$ 
\begin{multline}\label{est:1} \|(a,\nabla a,\nabla^2L_\ep a)(t)\|_{\dot B^{\frac d2}_{2,1}}
+\|(u,\nabla u)(t)\|_{\dot B^{\frac d2+1}_{2,1}}+
\int_0^t \bigl(\|(u,\nabla u)\|_{\dot B^{\frac d2+1}_{2,1}} 
\\+\|(K_\ep a,\nabla K_\ep a)\|^\ell_{\dot B^{\frac d2+2}_{2,1}}
+\|\nabla L_\ep a\|_{\dot B^{\frac d2+1}_{2,1}}^h\bigr)d\tau
 \leq C\bigl(\|(a_0,\nabla a_0,\nabla^2L_\ep a_0)\|_{\dot B^{\frac d2}_{2,1}}
+\|(u_0,\nabla u_0)\|_{\dot B^{\frac d2+1}_{2,1}}\bigr)\cdotp\end{multline}
In addition, setting    $w = u +  \nabla K_\ep a,$ we have
\begin{multline}\label{est:2} 
\|(u,w)(t)\|_{\dot B^{\frac d2}_{2,1}} 
+ \int_0^t \|w\|_{\dot B^{\frac d2}_{2,1}}\,d\tau
+ \biggl(\int_0^t  \|u\|_{\dot B^{\frac d2}_{2,1}}^2\,d\tau\biggr)^{1/2}
\\\leq C\bigl(\|(a_0,\nabla a_0,\nabla^2L_\ep a_0)\|_{\dot B^{\frac d2}_{2,1}}
+\|(u_0,\nabla u_0,\nabla^2 u_0)\|_{\dot B^{\frac d2}_{2,1}}\bigr)\cdotp
 \end{multline}
\end{thm}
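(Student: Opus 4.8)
The plan is to build the solution via a standard iteration/continuation scheme, where the heart of the matter is a set of \emph{a priori estimates} uniform in $\ep$ at the two regularity levels $\sigma=\frac d2+1$ and $\sigma=\frac d2$ dictated by \eqref{est:1}--\eqref{est:2}. I would first reformulate \eqref{Kepsilon} with $\f=1$ in terms of $(a,u)$ with $a=\rho-1$, obtaining $a_t+\div u=-\div(au)$ and $u_t+u+\nabla K_\ep a=-u\cdot\nabla u-\nabla K_\ep\ast(au)$ (using $\rho\nabla K_\ep\ast\rho=\nabla K_\ep\ast\rho+\nabla K_\ep\ast(a\rho)$ and absorbing the constant). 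Then I localize by $\ddj$ and, following the spectral picture in the introduction, treat the two frequency regions $\{2^j\wh L_\ep(2^j)<\nu_0\}$ and $\{2^j\wh L_\ep(2^j)\ge\nu_0\}$ separately.

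\textbf{High-frequency regime ($2^j\wh L_\ep(2^j)\ge\nu_0$).} Here I expect a genuine damping: performing an $L^2$ energy estimate on $\ddj u$ gives the $\|u\|_{L^1_t(\dot B^\sigma)}$ contribution directly from the zeroth-order term $u$, and the coupling with $\nabla K_\ep a$ is controlled because $2^j\wh L_\ep(2^j)$ is bounded below, so $\nabla L_\ep\ast a$ inherits $L^1$-in-time integrability from $u$. The commutator/source terms $\div(au)$, $u\cdot\nabla u$, $\nabla K_\ep\ast(au)$ are handled by the standard product and commutator estimates in $\dot B^\sigma_{2,1}$ from \cite{BCD}, with the extra $\nabla K_\ep$ convolutions bounded uniformly in $\ep$ using \eqref{eq:Kep}.

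\textbf{Low-frequency regime ($2^j\wh L_\ep(2^j)<\nu_0$).} This is the delicate part and where I expect the main obstacle. The key is to work with the \emph{damped mode} $w=u+\nabla K_\ep a$: subtracting appropriately, $w$ satisfies an equation of the form $w_t+w=-u\cdot\nabla u-\nabla K_\ep\ast(au)+\nabla K_\ep\div u$ (the last term coming from differentiating $\nabla K_\ep a$ and using the mass equation), which, because $2^j\wh L_\ep(2^j)$ is small, behaves like an exponentially damped equation for $w$ modulo a small off-diagonal term, yielding the $\int_0^t\|w\|_{\dot B^{d/2}_{2,1}}$ bound in \eqref{est:2}. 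Meanwhile $a$ obeys, after eliminating $u=w-\nabla K_\ep a$, a damped-wave-type equation $a_{tt}+a_t-\Delta K_\ep a=(\text{l.o.t.})$; localized in frequency this is the \emph{degenerate parabolic} mode $\lambda^-\simeq|\xi|^2\wh K_\ep(\xi)=|\xi|^2\wh L_\ep(\xi)^2$, so a Lyapunov functional of the type $\mathcal L_j^2\sim\|\ddj u\|_{L^2}^2+\|\ddj\nabla L_\ep a\|_{L^2}^2+2^j\wh L_\ep(2^j)\,(\ddj u\mid\ddj\nabla K_\ep a)_{L^2}$ (exactly in the spirit of \cite{CBD2,CBD3}) should be coercive and decay, producing the $\|(K_\ep a,\nabla K_\ep a)\|^\ell_{\dot B^{\sigma+1}_{2,1}}$ and the $\bigl(\int_0^t\|u\|^2_{\dot B^{d/2}_{2,1}}\bigr)^{1/2}$ terms. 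The crucial technical point — and the hardest — is that the loss of symmetry forces one to treat $K_\ep a$ almost as an independent unknown, so that in all nonlinear terms one meets \emph{commutators with $K_\ep$} such as $[K_\ep\ast,u\cdot\nabla]a$ and their second-order analogues; controlling these \emph{uniformly in $\ep$} requires a second-order Taylor expansion of $L_\ep$, hence a bound on $\|\nabla^2u\|_{L^\infty}\lesssim\|\nabla u\|_{\dot B^{d/2}_{2,1}}$, which is precisely why the higher level $\sigma=\frac d2+1$ (and the hypothesis $\nabla^2L_\ep a_0\in\dot B^{d/2}_{2,1}$) is needed.

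\textbf{Closing the argument.} Summing the localized estimates over $j$ with the $\ell^1(\Z)$ weights, I obtain an inequality of the form $X(t)\le C\,X(0)+C\,X(t)^2$ for the norm $X(t)$ appearing on the left of \eqref{est:1}, and separately the bound \eqref{est:2}; a standard bootstrap under the smallness assumption \eqref{eq:smalldata} then yields global control. Existence is obtained by applying these uniform bounds to a sequence of approximate solutions (Friedrichs mollification or a fixed-point on a short time interval, which is easy since the nonlocal force is smooth), passing to the limit, and uniqueness follows from a stability estimate at a lower regularity level. I would remark that all constants are independent of $\ep$ because every appearance of $\nabla K_\ep=\nabla L_\ep\ast L_\ep$ is estimated through \eqref{eq:Kep} and the frequency splitting \eqref{eq:fucknot}--\eqref{eq:fucknotbis}, never through quantities that blow up as $\ep\to0$.
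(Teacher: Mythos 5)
Your overall architecture — the frequency splitting keyed to whether $2^j\wh L_\ep(2^j)$ is below or above $\nu_0$, the damped mode $w=u+\nabla K_\ep a$ together with the degenerate parabolic mode $a$, a Crin-Barat--Danchin-type Lyapunov functional, second-order Taylor expansions for the commutators with $L_\ep$ forcing control of $\|\nabla^2u\|_{L^\infty}$, and the bootstrap-plus-compactness scheme — coincides with the paper's, and your treatment of the regime $2^j\wh L_\ep(2^j)<\nu_0$ is essentially sound. The genuine gap is in the regime $2^j\wh L_\ep(2^j)\geq\nu_0$, which you dismiss as the easy one. An $L^2$ energy estimate on $\ddj u$ alone yields
$\|u_j(t)\|_{L^2}+\int_0^t\|u_j\|_{L^2}\,d\tau\leq\|u_j(0)\|_{L^2}+\int_0^t\|\ddj\nabla K_\ep a\|_{L^2}\,d\tau+\cdots,$
and the coupling term on the right is a priori bounded only in $L^\infty$ in time, so it produces exactly the linear-in-time growth described in the introduction. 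There is no mechanism by which ``$\nabla L_\ep\ast a$ inherits $L^1$-in-time integrability from $u$'': integrating the velocity equation in time bounds $\bigl\|\int_0^t\nabla K_\ep a\,d\tau\bigr\|$, not $\int_0^t\|\nabla K_\ep a\|\,d\tau$. The $L^1$-in-time dissipation of $a$ in this regime (the term $\|\nabla L_\ep a\|^h_{\dot B^{\sigma}_{2,1}}$ appearing in \eqref{est:1}, without which the $u$-estimate itself cannot be closed) can only be extracted from the cross term $-2\int_{\R^d} a_j\,\div u_j\,dx$ of a hyperbolic energy functional. This is precisely where the paper places its Lyapunov functional \eqref{eq:cLj}; you place yours in the low frequencies instead, where the paper uses the simpler $(a,w)$ diagonalization, and that swap leaves the high frequencies uncovered.

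Two further points. First, the paper deliberately proves its estimates for a \emph{linear system with variable coefficients} $(b,v)$ satisfying \eqref{eq:bv}, with the weights $(1+b)$ (and $(1+c)$) built into $\cL_j^2$ and the doubled unknown $(a_j,L_\ep a_j)$ compensating the asymmetry between $\Id$ and $K_\ep$: the exact cancellation between $\int(1\!+\!c)\nabla L_\ep\ddj((1\!+\!b)\div u)\cdot\nabla L_\ep a_j\,dx$ and its counterpart in the $\div u_j$ energy is what prevents a loss of derivative. Your formulation with all nonlinearities as sources does not address this, and the paper explicitly warns that the source-term route fails. Second, a minor algebra slip: dividing the momentum equation by $\rho$ leaves only $\nabla K_\ep\ast a$ on the right-hand side; your extra term $\nabla K_\ep\ast(au)$ is spurious, and the sign of $\nabla K_\ep\div u$ in your $w$-equation is wrong (one should find $w_t+w+u\cdot\nabla w=[u,\nabla K_\ep]\cdot\nabla a-\nabla K_\ep\div(w-\nabla K_\ep a)-\nabla K_\ep(a\,\div u)$).
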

Several important remarks are in order:
\begin{itemize}
\item[--] The above statement is valid for any $\ep>0,$ \emph{with constants
$\alpha_0$ and $C$ independent of~$\ep$}. 
We stated only the case $\f=1$ for simplicity.  However, whenever the family $(L_\ep)_{\ep>0}$
is given by $L_\ep=\ep^{-d} L(\ep^{-1}\cdot),$ then the rescaling
\begin{equation}\label{eq:keyrescaling}
\rho(t,x)=\wt\rho(\f t,\f x)\andf u(t,x)=\wt u(\f t,\f x)
\end{equation}
 transforms the case $(\f,\ep)$ into  the case $(1,\ep\f).$ 
Hence, one may deduce from the above theorem a global well-posedness
result that  is valid for any $\ep>0$ and $\f>0$ (see the beginning of Section \ref{s:limit}).

\item[--]  The integrability property of the damped mode $w$ is the key 
to proving strong convergence results in the asymptotics $\f\to\infty.$

\item[--]   Our approach is  appropriate for dealing with the \emph{multi-dimensional} case. 
 In the one-dimensional case, the above result is still valid
but the proof has to be slightly modified and it is 
very likely that  stronger results may be obtained by different techniques (see a similar problem in \cite{CCZ}). 

\item[--] A global existence result in the spirit of Theorem \ref{thm:GWP} 
can be established in the more general setting of System \eqref{Kepsilon-PP}
(see Subsection \ref{s:pressure}).
\end{itemize}
Let us quickly outline the proof of Theorem \ref{thm:GWP}.
The core consists in establishing a priori estimates 
in  the functional framework given above for the following linearization 
of \eqref{Kepsilon}: 
\begin{equation}\label{eq:eulerconv}
  \left\{  \begin{array}{l}
         a_t  + \div  u +v\cdot\nabla a+ b\,\div u=f, \\[1ex]
         u_t   +  u + v\cdot\nabla u +\nabla K_\ep \ast  a=g,
    \end{array}\right.
\end{equation}
where the given pair $(b,v)$  satisfies
\begin{equation}\label{eq:bv}
b_t+\div((1+b)v)=0.
\end{equation}
We consider this   linear system with \emph{variable coefficients} since just looking at \eqref{eq:eulerlin} with source terms 
cannot prevent a loss of derivatives. 
Here, we shall actually extend our analysis to the more general situation of 
 $(1+c)\nabla K_\ep\ast a$ in the second line of \eqref{eq:eulerconv}
and to a whole range of regularity exponents.  The first extension  is motivated by our wish to 
be able to consider more general pressure laws (see \eqref{eq:P}) and 
the second one, to have a ready-to-use result for proving  stability estimates (and thus uniqueness)
and the convergence from \eqref{Kepsilon} to \eqref{Euler} by the same stroke.
\smallbreak
Now,  to get optimal a priori estimates for \eqref{eq:eulerconv}, 
we adapt the method of \cite{CBD2}. This consists in, first, 
localizing \eqref{eq:eulerconv} by means of a Littlewood-Paley decomposition then:
\begin{itemize}
\item[--] proving estimates for the (degenerate) parabolic mode $a$
and the damped mode $w=u+\nabla K_\ep a$ rather than for $(a,u),$ 
in the regime of frequencies $\xi$ such that $|\xi|\wh L_\ep(\xi)\leq \nu_0$;
\item[--] considering a Lyapunov functional  \emph{depending  on the coefficient 
$b$} that encodes the information on $a,u,\nabla u,\nabla L_\ep a$
for frequencies such that  $|\xi|\wh L_\ep(\xi)\geq\nu_0.$
As for the Euler equation in \cite{CBD2}, the dependence of this 
functional on $b$ and $c$ is designed to exactly compensate the loss of derivative
coming from $b\,\div u$ and $c\nabla K_\ep\ast a$. This could be seen as a symmetrization of \eqref{eq:eulerconv} after spectral localization  by means of a Littlewood-Paley decomposition. 
\end{itemize}
Since, for proving global existence, we will have to take eventually $b=a$ and $v=u,$
 checking  at every step of the proof that 
only norms of $(b,v)$ that can be controlled in terms of the norms 
coming into play in Theorem \ref{thm:GWP} is fundamental. 
\smallbreak
The other steps of the proof are more standard: having at hand estimates 
for \eqref{eq:eulerconv} in the spaces $E^{\sigma}_{K_\ep},$
one can close the estimates globally for System \eqref{eq:Kep}
under Condition \eqref{eq:smalldata} in the space 
$E^{\frac d2+1}_{K_\ep}$ and prove  stability estimates  in $E^{\frac d2}_{K_\ep}.$  
These will enable us to prove the uniqueness part of the statement. 
As for the existence part, we first smooth out the data and prove
the existence of a sequence of local-in-time solutions $(a^{(n)},u^{(n)})_{n\in\N}$
with high Sobolev regularity. 
Combining our estimates in $E^{\frac d2+1}_{K_\ep}$ with a continuation criterion, 
we then succeed in proving that these smooth solutions are actually global, 
and that  $(a^{(n)},u^{(n)})_{n\in\N}$ is bounded in  $E^{\frac d2}_{K_\ep}.$  
Combining with functional analysis arguments allow to conclude to convergence, up to subsequence, 
to a solution of \eqref{Kepsilon} with the desired properties. 
\medbreak
Our second aim is to justify   that \eqref{Kepsilon} is indeed an approximation of \eqref{Euler}.
More precisely, we  show that
 the solution  of \eqref{Kepsilon} constructed above converges strongly and for all time for $\ep\to0$, to the unique solution of \eqref{Euler}. 
This is achieved in the following theorem that essentially follows from a variation on
stability estimates in $E^{\frac d2}_{K_\ep}.$
\begin{thm}\label{thm:conv} 
Assume,  in addition to \eqref{eq:Kep}, that  $L_\ep=\ep^{-d} L(\ep^{-1}\cdot)$ for a single
function $L.$
Consider initial data $(\rho_0=1+a_0,u_0)$ with $(a_0,u_0)$ in $\dot B^{\frac d2}_{2,1}\cap \dot B^{\frac d2+2}_{2,1}.$
There exists a universal constant $\alpha_0$ such that if 
\begin{equation}\label{eq:smallness2}\|(a_0,u_0)\|_{\dot B^{\frac d2}_{2,1}\cap \dot B^{\frac d2+2}_{2,1}}
\leq \alpha_0,\end{equation}
then, for all $\ep>0,$ System \eqref{Kepsilon} has a unique global 
solution $(\rho^\ep=1+a^\ep,u^\ep)$
with $(a^\ep,u^\ep)$ in $E^{\frac d2+1}_{K_\ep}$
and System \eqref{Euler} has a unique global 
solution $(\rho=1+a,u)$  with 
$(a,u)$ in $E^{\frac d2+1}.$
Furthermore, $$a^\ep\to a\ \hbox{ in }\ L^\infty_{loc}(\R_+;\dot B^{\frac d2+\alpha}_{2,1}),
\ \alpha\in[0,1)\andf 
u^\ep\to u\ \hbox{ in }\ L^\infty_{loc}(\R_+;\dot B^{\frac d2+\beta}_{2,1}),
\ \beta\in[0,2).$$
The above convergence holds uniformly on $\R_+$ if:
\begin{itemize}
    \item[--]  either $\eta\mapsto |\eta|^{-1}(\wh L(\eta)-1)$ is bounded;
\item[--] or  $a_0\in\dot B^{\frac d2-1}_{2,1}.$ 
In this case, we have $a^\ep$ and $a$ in $\cC_b(\R_+;\dot B^{\frac d2-1}_{2,1}).$ \end{itemize}
    \end{thm}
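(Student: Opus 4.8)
The plan is to deduce Theorem \ref{thm:conv} from the uniform-in-$\ep$ estimates of Theorem \ref{thm:GWP} together with a stability estimate in a space of lower regularity applied to the difference of the two solutions. First I would observe that under the smallness assumption \eqref{eq:smallness2}, the hypotheses \eqref{eq:smalldata} of Theorem \ref{thm:GWP} are satisfied uniformly in $\ep$: indeed $\nabla^2 L_\ep a_0 = \ep^{-2}(\nabla^2 L)(\ep^{-1}\cdot)\ast a_0$ after rescaling, and the bound $\|\nabla^2 L_\ep a_0\|_{\dot B^{d/2}_{2,1}}\lesssim \|(z\otimes z)\nabla^2 L_\ep\|_{L^1}\|a_0\|_{\dot B^{d/2+2}_{2,1}}$ follows from \eqref{eq:Kep} and the fact that convolution against $\nabla^2 L_\ep$ gains two derivatives in a way controlled by the first moment bound in \eqref{eq:Kep} (this is exactly the type of commutator/kernel estimate the authors announce they will prove). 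Hence $(\rho^\ep,u^\ep)$ exists globally in $E^{d/2+1}_{K_\ep}$ with the bounds \eqref{est:1}--\eqref{est:2} holding uniformly in $\ep$; taking $\wh K_\ep\equiv1$ gives the corresponding statement for the Euler system, i.e. the unique global $(\rho,u)\in E^{d/2+1}$.

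Next I would write the system for the difference $(\delta a,\delta u):=(a^\ep-a,u^\ep-u)$. Subtracting \eqref{Kepsilon} (with $\f=1$) from \eqref{Euler} one gets a linear system of the form \eqref{eq:eulerconv} for $(\delta a,\delta u)$ with variable coefficients $(b,v)=(a,u)$ (or $(a^\ep,u^\ep)$), and with source terms collecting two kinds of contributions: genuinely quadratic remainders $\delta a\,\div u^\ep$, $\delta u\cdot\nabla a$, $\delta u\cdot\nabla u^\ep$, etc., which are handled by the product laws in $\dot B^{d/2}_{2,1}$ and absorbed using smallness; and the \emph{consistency error} coming from replacing $\nabla K_\ep\ast a$ by $\nabla a$, namely $\nabla(K_\ep\ast a - a)$ together with the discrepancy in the damped/parabolic mode structure. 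The crucial point is that this consistency error is small in a weaker norm: writing $\nabla(K_\ep\ast a-a)=\mathcal F^{-1}\big(i\xi(\wh K_\ep(\xi)-1)\wh a\big)$ and using $\wh K_\ep(\xi)=\wh K(\ep\xi)$ with $\wh K$ smooth, $\wh K(0)=1$, one has $|\wh K(\ep\xi)-1|\lesssim \min(1,\ep|\xi|)$, so that for any $\alpha\in[0,1)$, $\|K_\ep\ast a-a\|_{\dot B^{d/2+\alpha}_{2,1}}\lesssim \ep^{1-\alpha}\|a\|_{\dot B^{d/2+1}_{2,1}}\to0$ (interpolating between the uniform $\dot B^{d/2+1}_{2,1}$ bound on $a$ and the $\ep$-gain in a lower norm), and one plugs this into the stability estimate of Theorem \ref{thm:GWP}'s framework at regularity $\sigma=d/2$ (the space $E^{d/2}_{K_\ep}$, used for uniqueness). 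Closing that estimate via Grönwall using the uniform $L^1_t$ control of $\nabla u^\ep,\nabla u$ gives $\|(\delta a,\delta u)\|_{L^\infty_T(\dot B^{d/2}_{2,1})}\to0$ for each fixed $T$; interpolating once more with the uniform $\dot B^{d/2+2}_{2,1}$ bounds yields convergence of $a^\ep\to a$ in $L^\infty_{loc}(\R_+;\dot B^{d/2+\alpha}_{2,1})$ for $\alpha\in[0,1)$ and of $u^\ep\to u$ in $L^\infty_{loc}(\R_+;\dot B^{d/2+\beta}_{2,1})$ for $\beta\in[0,2)$.

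For the uniformity on all of $\R_+$, the issue is that the naive Grönwall argument loses the $\ep$-smallness over infinite time unless the difference is controlled in a space carrying time integrability. Under the first alternative hypothesis, $|\eta|^{-1}(\wh L(\eta)-1)$ bounded, one gets the \emph{stronger} pointwise bound $|\wh K_\ep(\xi)-1|\lesssim \ep|\xi|$ for \emph{all} $\xi$ (not just the bounded regime), so the consistency error is $O(\ep)$ in $\dot B^{d/2}_{2,1}$ globally; combined with the fact that in the stability estimate the coefficients enter only through $L^1(\R_+;\dot B^{d/2+1}_{2,1})$ norms of $(u^\ep,\nabla u^\ep)$, which are \emph{finite uniformly in $\ep$} by \eqref{est:1}, Grönwall closes on $\R_+$ directly. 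Under the second alternative, $a_0\in\dot B^{d/2-1}_{2,1}$, one first has to upgrade Theorem \ref{thm:GWP} to include the lower-regularity endpoint: propagating $\dot B^{d/2-1}_{2,1}$ regularity on $a$ (and the associated damped mode $w$) gives $a^\ep,a\in\cC_b(\R_+;\dot B^{d/2-1}_{2,1})$ uniformly, and then interpolating the consistency error between $\dot B^{d/2-1}_{2,1}$ and $\dot B^{d/2+1}_{2,1}$ again produces a global-in-time $\ep$-gain; the decay/integrability of $w^\ep$ from \eqref{est:2} is what makes the infinite-time bound work.

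The main obstacle I expect is the stability estimate at regularity $d/2$ for the difference system \emph{uniformly in $\ep$}: because the kernel $K_\ep$ breaks the partially dissipative symmetric structure, one cannot treat $(\delta a,\delta u)$ by a plain energy method but must again work with the degenerate parabolic mode $\delta a$ and the damped mode $\delta w=\delta u+\nabla K_\ep\ast\delta a$ on the frequency regime $|\xi|\wh L_\ep(\xi)\le\nu_0$, and with a $b$- and $c$-dependent Lyapunov functional on the complementary regime — all of which must be checked to be controllable by the norms appearing in Theorem \ref{thm:GWP}. In particular, the source term $\nabla K_\ep\ast a$ in the Euler-side equation has to be rewritten as $\nabla a + \nabla(K_\ep\ast a-a)$ and the commutators of the nonlinearities with $K_\ep$ estimated uniformly, exactly the ``dual level of regularity'' / Taylor-expansion-at-order-two machinery flagged in the introduction; getting those commutator bounds to be $\ep$-uniform (and, for the global-in-time statement, to carry the right time integrability) is the technical heart of the argument.
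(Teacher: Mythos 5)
Your proposal follows essentially the same route as the paper: uniform global existence from Theorem \ref{thm:GWP} and its Euler analogue, then a stability estimate at regularity $d/2$ for the difference system whose only new source term is the consistency error $\nabla(K_\ep-\Id)a$, closed by Gronwall using the global smallness and $L^1$-in-time integrability of the coefficients, followed by interpolation with the uniform $\dot B^{\frac d2+2}_{2,1}$ bounds. One caveat: your claim that $|\wh K(\ep\xi)-1|\lesssim\min(1,\ep|\xi|)$, hence a rate $\ep^{1-\alpha}$, is not available under \eqref{eq:Kep} alone — a Lipschitz bound on $\wh K$ at the origin is precisely the extra hypothesis of the first alternative; in the general case the paper instead deduces $\int_0^t\|\nabla(K_\ep-\Id)a\|_{\dot B^{\frac d2}_{2,1}\cap\dot B^{\frac d2+1}_{2,1}}\,d\tau\to0$ from Lebesgue's dominated convergence theorem (using $\nabla a\in L^2(\R_+;\dot B^{\frac d2}_{2,1})$ and $\nabla^2 a\in L^1(\R_+;\dot B^{\frac d2}_{2,1})$), which yields convergence without a rate and is all the statement requires.
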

In the last part of the paper, we shall investigate  the 
high friction  limit $\f\to\infty$  for  \eqref{Kepsilon}. 
Our goal is to showcase the convergence of the (suitably rescaled) 
density toward a solution of  either the well-known porous   media equation
\begin{equation}\label{eq:pme}
\partial_tn-\div(n\nabla n)=0
\end{equation}
or of  the following  regularization of it: 
\begin{equation}\label{eq:pmeps}
\partial_tr-\div(r\nabla K_\ep\ast r)=0.
\end{equation}
These two equations can be guessed after performing the following   \emph{diffusive} change of variables in \eqref{Kepsilon}: 
\begin{equation}\label{eq:diffusive}
\rho(t,x)=\check \rho(\f^{-1} t,x)\andf u(t,x)=\f^{-1}\check u(\f^{-1} t,x).
\end{equation}
We get 
$$\left\{  \begin{array}{l}
\check \rho_t+\div (\check\rho\check u)=0, \\[1ex]
\f^{-2}\bigl(\check u_t + \check u\cdot\nabla \check u\bigr) +\check u
+\nabla K_{\ep}\ast\check \rho=0.\end{array}\right.$$
Hence, it can be expected that 
$\check u +\nabla K_{\ep}\ast\check \rho$ goes to $0$ when $\f$ tends to $\infty.$
Reverting to the mass equation and assuming that 
$\check\rho\to r,$  one can conclude that  $r$ satisfies \eqref{eq:pmeps}. 
In the same way, if both $\f\to\infty$ and $\ep\to0,$ then 
we will prove that $\check \rho\to n$
 with $n$ satisfying  \eqref{eq:pme}. 
\medbreak
The rest of the paper is organized as follows. In Section \ref{s:linear}, we establish a priori estimates for the  linear  System \eqref{eq:eulerconv}. To accommodate the general pressure case \eqref{Kepsilon-PP}, we 
actually replace the term $\nabla K_\ep\ast a$ with $(1+c)\nabla K_\ep\ast a$
for some given function $c$. At first reading however, setting $c\equiv 0$ is advisable, as it  corresponds to Theorems \ref{thm:GWP} and \ref{thm:conv}, 
(see Subsection \ref{s:pressure} for the general case).  The principal outcome, as presented in Theorem \ref{thm:linear}, furnishes a comprehensive estimate crucial for subsequent developments.
Section \ref{s:GWP} is dedicated to proving the existence of solutions. We outline the main steps of the construction procedure, followed by the proof of uniqueness, and ultimately, the convergence to the classical Euler system under the condition $\ep \to 0$. All these aspects rely on the estimates established in Theorem \ref{thm:linear}.
In Section \ref{s:limit}, we delve into relaxation results, as presented in (\ref{eq:pme})--(\ref{eq:diffusive}). We explore two types of relaxation, yielding modifications and classical versions of the porous equation.
 Subsection \ref{ss:com} is devoted to the study of various commutators, essential for our analysis. Subsequently, in Subsection \ref{s:pressure}, we examine the general case of pressure, emphasizing the distinctions from the original scenario.
Lastly, we provide motivation for transitioning from the particle system (\ref{eq:boltz}) to the hydrodynamical equations under consideration \eqref{Euler} and \eqref{Euler-P}.

\section{Study of a suitable linearized system}\label{s:linear}

This part is devoted to proving  a priori estimates for the following linear system: 
\begin{equation}\label{eq:eulerconvbis}\left\{  \begin{array}{l}
         a_t  +v\cdot\nabla a+ (1+b)\,\div u=f, \\[1ex]
         u_t   +  u + v\cdot\nabla u + (1+c)\nabla K_\ep   a=g,\\[1ex]
         a|_{t=0}=a_0, \qquad u|_{t=0}=u_0.
    \end{array}\right.
\end{equation}
Note that the system \eqref{eq:eulerconv} presented before corresponds to the special 
case $c=0.$ The reason for presenting here this more general class of systems
is motivated  by our desire to consider  \eqref{Kepsilon} with more general pressure laws (see Section \ref{s:pressure}). To short the notation from now we write $K_\ep a$ instead of  $K_\ep \ast a$.
\medbreak
Throughout this section, we assume that the (given) triple  $(b,c,v)$  satisfies the relation \eqref{eq:bv}
and the smallness condition\footnote{We assumed \eqref{eq:smallbc} for simplicity. 
A similar result holds if  $0<r\leq 1+b, 1+c < R$ for any real numbers 
$r$ and $R$:  it is just a matter of adapting
the definition of the Lyapunov functional in \eqref{eq:cLj} below, accordingly.} 
\begin{equation}\label{eq:smallbc}
\max\bigl(\|b\|_{L^\infty(0,T\times\R^d)},\|c\|_{L^\infty(0,T\times\R^d)}\bigr)\leq 1/4. 
\end{equation}
\begin{thm}\label{thm:linear} Let $\sigma$ be in the range $(1-d/2,1+d/2].$
Assume   that $a_0$ and $u_0$ are such that 
\begin{equation}\label{eq:data}
u_0\in\dot B^{\sigma}_{2,1}\cap\dot B^{\sigma+1}_{2,1},\quad
a_0\in \dot B^{\sigma-1}_{2,1}\cap\dot B^{\sigma}_{2,1}\andf L_\ep \nabla a_0\in\dot B^{\sigma}_{2,1},
\end{equation}
and that the source terms $f$ and $g$ verify 
\begin{equation}\label{eq:source}
g\in L^1(0,T; \dot B^{\sigma}_{2,1}\cap\dot B^{\sigma+1}_{2,1}),\quad
f \in  L^1(0,T; \dot B^{\sigma-1}_{2,1}\cap\dot B^{\sigma}_{2,1})
\andf L_\ep\star \nabla f \in L^1(0,T;\dot B^{\sigma}_{2,1}).
\end{equation}
Finally, let us assume that the triple $(b,c,v)$ satisfies \eqref{eq:bv} and \eqref{eq:smallbc} and has enough regularity.
Consider a smooth enough solution $(a,u)$ of \eqref{eq:eulerconv} on $[0,T]\times\R^d,$ and set
$$\begin{aligned}
 X^\sigma_{a,u}(t)&:=\|(a,\nabla a,\nabla^2L_\ep a)(t)\|_{\dot B^{\sigma-1}_{2,1}}
+\|(u,\nabla u)(t)\|_{\dot B^{\sigma}_{2,1}}\cr\andf
 H^\sigma_{a,u}(t)&:=\|(u,\nabla u)\|_{\dot B^{\sigma}_{2,1}} 
 +\|(\nabla K_\ep a,\nabla^2 K_\ep a)\|^\ell_{\dot B^{\sigma}_{2,1}}+\|\nabla L_\ep a\|_{\dot B^{\sigma}_{2,1}}^h.\end{aligned}$$
There exists a constant $C$ \emph{independent of $\ep$ and of $T$} such that 
for all $t\in[0,T),$ there holds:
\begin{multline}   \label{eq:linear}
X^\sigma_{a,u}(t)+\int_0^tH^\sigma_{a,u}\,d\tau
\leq C\biggl(X^\sigma_{a,u}(0) +\int_0^t X^\sigma_{f,g}\,d\tau
+\int_0^t \|\nabla v\|_{\dot B^{\frac d2}_{2,1}\cap
\dot B^{\frac d2+1}_{2,1}}X^\sigma_{a,u}\,d\tau\\
+\int_0^t\|b,\nabla b\|_{\dot B^{\frac d2}_{2,1}}\|u,\nabla u\|_{\dot B^{\sigma}_{2,1}}\,d\tau
\\+\int_0^t\Bigl(\|b,\nabla v,\nabla L_\ep b\|_{\dot B^\sigma_{2,1}}\|\nabla u\|_{L^\infty}+\|\nabla v\|_{\dot B^\sigma_{2,1}}\|\nabla L_\ep a\|_{L^\infty}\Bigr)d\tau\\
+\!\int_0^t\!\Bigl(\|c\|_{\dot B^{\frac d2}_{2,1}}
 \bigl(\|\nabla L_\ep a \|^h_{\dot B^{\sigma}_{2,1}} +\|\nabla K_\ep a \|^\ell_{\dot B^{\sigma}_{2,1}}
 +\|\nabla^2 K_\ep a\|^\ell_{\dot B^{\sigma}_{2,1}}\bigr) \\
 + \bigl(\|\nabla c\|_{\dot B^{\frac d2}_{2,1}}
 +\|c_t\!+\!\div((1\!+\!c)v)\|_{L^\infty}\bigr)\|\nabla L_\ep a\|_{\dot B^{\sigma}_{2,1}} \!+\! \|\nabla c\|_{\dot B^{\frac d2}_{2,1}}\|L_\ep a\|_{\dot B^{\sigma}_{2,1}}\\  
+\|c\|_{\dot B^\sigma_{2,1}}\bigl(\|\nabla L_\ep a\|^h_{L^\infty} +\|\nabla K_\ep a\|^\ell_{L^\infty} +\|\nabla^2 K_\ep a\|^\ell_{L^\infty}\bigr) +\|\nabla c\|_{\dot B^\sigma_{2,1}}\|\nabla L_\ep a\|_{L^\infty}\Bigr)d\tau\biggr),
\end{multline}
and the terms involving  the $L^\infty$  norm  of   $\nabla u$
or $\nabla L_\ep a$ and so on,  are not needed if $\sigma\leq d/2.$
\medbreak
If, in addition, $u_0$ belongs to $\dot B^{\sigma-1}_{2,1}$ and $g,$ to $L^1(0,T;\dot B^{\sigma-1}_{2,1})$
then we also have
\begin{multline}   \label{eq:linearb}
\|(u,w)(t)\|_{\dot B^{\sigma-1}_{2,1}} + \int_0^t \|w\|_{\dot B^{\sigma-1}_{2,1}}\,d\tau\lesssim X^\sigma_{a,u}(0)+\|u_0\|^\ell_{\dot B^{\sigma-1}_{2,1}} 
+ \int_0^t\bigl(\|g\|^\ell_{\dot B^{\sigma-1}_{2,1}} +  X^\sigma_{f,g}\bigr)d\tau
\\ +\int_0^t \|\nabla v\|_{\dot B^{\frac d2}_{2,1}\cap
\dot B^{\frac d2+1}_{2,1}}\bigl(\|u\|^\ell_{\dot B^{\sigma-1}_{2,1}} +X^\sigma_{a,u}\bigr)d\tau
+\int_0^t \|c\|_{\dot B^{\frac d2}_{2,1}}\|K_\ep a\|_{\dot B^{\sigma}_{2,1}}\,d\tau
+\hbox{last  5 lines of \eqref{eq:linear}},\end{multline}
where the `damped mode' $w$ is defined by 
\begin{equation}\label{def:w}
    w = u +  \nabla K_\ep a.
\end{equation}
\end{thm}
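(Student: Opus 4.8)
The plan is to follow the strategy announced after Theorem \ref{thm:GWP}: apply the Littlewood--Paley blocks $\ddj$ to \eqref{eq:eulerconvbis}, split $\Z$ into the \emph{degenerate parabolic range} $2^j\wh L_\ep(2^j)<\nu_0$ and the \emph{hyperbolic range} $2^j\wh L_\ep(2^j)\geq\nu_0$, build in each range a frequency-localized Lyapunov functional whose time derivative controls the corresponding piece of $H^\sigma_{a,u}$, then multiply the $j$-th estimate by $2^{j(\sigma-1)}$ (density level) and $2^{j\sigma}$ (velocity level) and sum. The cornerstone is the damped mode $w=u+\nabla K_\ep a$ of \eqref{def:w}. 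Differentiating the second line of \eqref{eq:eulerconvbis} in time, inserting the first line, and noticing that the two transport contributions to $\nabla K_\ep a$ cancel, one gets
\[
w_t+w+v\cdot\nabla w=g+\nabla K_\ep f-c\,\nabla K_\ep a-[\nabla K_\ep,v\cdot\nabla]a-\nabla K_\ep\bigl((1+b)\div u\bigr),
\]
and, writing $u=w-\nabla K_\ep a$, the term $\nabla K_\ep\div u$ splits into $\nabla K_\ep\div w$ (a genuine loss of one derivative, but paid for by the smoothing of $\nabla K_\ep$) and a second-order-in-$K_\ep a$ piece $\nabla\div(K_\ep\ast\nabla K_\ep a)$ which, in the range $2^j\wh L_\ep(2^j)<\nu_0$, is dominated by $\|\nabla^2K_\ep a\|^\ell_{\dot B^\sigma_{2,1}}$. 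A localized $L^2$ estimate on this equation then exploits the exponential factor $e^{-t}$ coming from the damping $+w$, the transport term being absorbed by $[\ddj,v\cdot\nabla]$ and the $K_\ep$ inside the nonlinear terms by the commutator bounds of Subsection \ref{ss:com}, all uniformly in $\ep$ thanks to \eqref{eq:Kep}; this yields a bound for $\|\ddj w\|_{L^2}$ and $\int_0^t\|\ddj w\|_{L^2}\,d\tau$.

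In the degenerate parabolic range $2^j\wh L_\ep(2^j)<\nu_0$, eliminating $u$ in favour of $w$ in the mass equation gives $a_t+v\cdot\nabla a-(1+b)\Delta K_\ep a=f-(1+b)\div w$, for which the localized dissipation $-\langle(1+b)\Delta K_\ep\ddj a,\ddj a\rangle\simeq\|\nabla L_\ep\ddj a\|_{L^2}^2$ controls, since $2^j\wh L_\ep(2^j)<\nu_0$, both $\|\nabla K_\ep\ddj a\|_{L^2}$ and $\|\nabla^2K_\ep\ddj a\|_{L^2}$. To keep the coefficients $b,c$ under control I would actually add to the block energy of $(\ddj a,\ddj u)$ a small multiple of the cross term $\langle\ddj u,\nabla K_\ep\ddj a\rangle$ (and of $\langle\nabla\ddj u,\nabla\nabla K_\ep\ddj a\rangle$) à la Beauchard--Zuazua, whose time derivative produces the negative definite contributions $-\|\nabla K_\ep\ddj a\|^2_{L^2}$ and $-\|\nabla^2K_\ep\ddj a\|^2_{L^2}$, the leftover terms being absorbed by $\|\ddj u\|_{L^2}$, by the $w$-estimate above, and by commutators. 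Summing over this range delivers the parts $\|(u,\nabla u)\|_{\dot B^\sigma_{2,1}}$, $\|(\nabla K_\ep a,\nabla^2K_\ep a)\|^\ell_{\dot B^\sigma_{2,1}}$ of $H^\sigma_{a,u}$ and the matching part of $X^\sigma_{a,u}$.

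In the hyperbolic range $2^j\wh L_\ep(2^j)\geq\nu_0$, the multiplier $\nabla K_\ep$ is, on each dyadic block, comparable to $2^j\wh L_\ep(2^j)$ times a bounded symbol, so the block behaves like the damped Euler system and I would follow \cite{CBD2,CBD3}: construct a Lyapunov functional $\mathcal L_j^2$ equivalent, uniformly in $\ep$ and $j$, to $\|\ddj(a,\nabla a,\nabla^2L_\ep a)\|_{L^2}^2+\|\ddj(u,\nabla u)\|_{L^2}^2$, assembled from these square norms plus cross terms of the form $\langle\nabla\ddj u,\nabla K_\ep\ddj a\rangle$, and with the relative weights of the $a$- and $u$-parts chosen as the $\wh K_\ep$-weighted analogue of $(1+c)/(1+b)$, precisely so that the two top-order terms $(1+b)\div\ddj u$ and $(1+c)\nabla K_\ep\ddj a$ cancel in $\frac{d}{dt}\mathcal L_j^2$; the residual terms then involve only $[\ddj,b]$, $[\ddj,c]$, $\nabla b$, $\nabla c$, $b_t+\div((1+b)v)$, $c_t+\div((1+c)v)$ and commutators of $K_\ep$ with products and with transport, which is exactly the list on the right-hand side of \eqref{eq:linear}. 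The dissipation of $\mathcal L_j^2$ controls $\|\ddj(u,\nabla u)\|_{L^2}$ and $\|\nabla L_\ep\ddj a\|_{L^2}$, that is the $h$-part of $H^\sigma_{a,u}$.

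Multiplying the $j$-th bound by the appropriate $2^{j(\sigma-1)}$ or $2^{j\sigma}$, summing over the two ranges, and using that the commutator estimates of Subsection \ref{ss:com} produce $\ell^1(\Z)$ sequences bounded by $\|\nabla v\|_{\dot B^{d/2}_{2,1}\cap\dot B^{d/2+1}_{2,1}}X^\sigma_{a,u}$ together with the $b,c$-error terms, one obtains \eqref{eq:linear} (no Gronwall is needed here, the $\|\nabla v\|X^\sigma_{a,u}$ term staying on the right-hand side); the $L^\infty$ terms enter only for $\sigma>d/2$, where $\dot B^\sigma_{2,1}\hookrightarrow L^\infty$ no longer supplies the product estimates available for $\sigma\leq d/2$. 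Finally \eqref{eq:linearb} follows by the same scheme one regularity level lower: $w$ still enjoys exponential damping in $\dot B^{\sigma-1}_{2,1}$, and $u=w-\nabla K_\ep a$ then yields its $\dot B^{\sigma-1}_{2,1}$ bound, the only new low-frequency contributions being $\|u\|^\ell_{\dot B^{\sigma-1}_{2,1}}$ and $\|g\|^\ell_{\dot B^{\sigma-1}_{2,1}}$ kept on the right-hand side. The \emph{main obstacle} will be proving, with constants independent of $\ep$, all the commutator bounds involving $K_\ep=L_\ep\star L_\ep$ --- both $[\ddj,v\cdot\nabla]$ twisted by $K_\ep$ and product commutators such as $K_\ep(b\,\div u)-b\,K_\ep\div u$ and $K_\ep(c\,\nabla K_\ep a)-c\,K_\ep\nabla K_\ep a$ --- which forces a second-order Taylor expansion of the kernel (hence the need to control $\|\nabla^2u\|_{L^\infty}$, the appearance of a second regularity level, and of $\nabla^2L_\ep a$ in $X^\sigma_{a,u}$) and relies on the structural bounds \eqref{eq:Kep} (the pointwise derivative bound $\xi_k\partial_\ell\wh L_\ep\lesssim\wh L_\ep$ for the transport-type commutators, the doubling property $\kappa\wh L_\ep(\xi)\leq\wh L_\ep(2\xi)$ for the dyadic summations); the second delicate point is designing the hyperbolic-range Lyapunov functional so that it is simultaneously equivalent to the $X^\sigma$-block uniformly in $(\ep,j)$, dissipative enough to dominate the $h$-part of $H^\sigma_{a,u}$, and generates only error terms that can later be absorbed by quantities controlled in Theorem \ref{thm:GWP} when $b=c=a$ and $v=u$.
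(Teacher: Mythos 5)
Your proposal is correct and follows essentially the same route as the paper: diagonalization into the degenerate parabolic mode $a$ and the damped mode $w=u+\nabla K_\ep\ast a$ in the regime $2^j\wh L_\ep(2^j)<\nu_0$, a $(b,c)$-dependent frequency-localized Lyapunov functional in the complementary regime (with a cross term and weighted $|\nabla L_\ep a_j|^2$, $(\div u_j)^2$ pieces arranged so that the top-order contributions of $(1+b)\div u$ and $(1+c)\nabla K_\ep a$ cancel), and the $\ep$-uniform commutator estimates of Subsection \ref{ss:com} based on a second-order Taylor expansion of the kernel. The only step you gloss over is the $L^1$-in-time control of $\|\nabla u\|^\ell_{\dot B^\sigma_{2,1}}$, which is \emph{not} a consequence of that of $\|u\|^\ell_{\dot B^\sigma_{2,1}}$ (frequencies with $2^j\wh L_\ep(2^j)<\nu_0$ need not be small), and which the paper obtains by writing the damped transport equation satisfied by $\partial_k u$ and bounding $\nabla((1+c)\nabla K_\ep a)$ separately --- an argument fully compatible with your scheme.
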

\begin{proof} In all that follows, $\{c_j\}_{j\in\Z}$   will denote a nonnegative sequence
with sum equal to $1,$ and we use  the notation 
$$z_j:=\ddj z, \qquad j\in\Z,\quad z\in\cS'(\R^d).$$

\subsection{First step: Low frequencies estimates} 

The first step consists in  estimating the `low frequencies' of $a$ at  level of regularity $\sigma-1,$ then of $w$ at level  $\sigma,$ where $w$ has been defined in \eqref{def:w}.
Estimates  for $u$ and $w$ at level $\sigma-1$ (that is, Inequality \eqref{eq:linearb})
are extra informations that can be obtained afterward. 

As a start, we look at the evolution of $(a,w),$ namely,
\begin{equation}\label{eq:aw}
  \left\{  \begin{array}{l}
         \!\!a_t  - \Delta K_\ep  a +v\cdot\nabla a=-\div w-b\.\div u, \\[1ex]
         \!\!w_t   +  w + v\cdot\nabla w =   [v,\nabla K_\ep]\cdot \nabla a - 
          \nabla K_\ep\div(w- \nabla K_\ep a)-\nabla K_\ep(b \. \div u) -c\nabla K_\ep a.
    \end{array}\right.
\end{equation}
Up to lower order terms, this is a diagonalization of System \eqref{eq:eulerconv}: $a$ may be seen as 
a (degenerate) parabolic mode, while $w$ is a damped mode. 
Now, to prove estimates in Besov spaces for $a$ and $w,$ the unavoidable first step
is to localize  \eqref{eq:aw} by means of $\ddj.$ We have:
  \begin{equation}\label{eq:awj}\left\{  \begin{array}{l}
         a_{j,t}  -   \Delta K_\ep  a_j +v\cdot\nabla a_j= [v,\ddj]\cdot\nabla a-\ddj\bigl(\div w+b\.\div u\bigr), \\[2ex]
         w_{j,t}   +  w_j + v\cdot\nabla w_j = [v,\ddj]\cdot\nabla w+  
         \ddj  \bigl([v,\nabla K_\ep]\cdot \nabla a\bigr)\\[1ex]
         \hspace{3cm}
          -  \ddj\nabla K_\ep\div(w-  \nabla K_\ep a)-  \ddj\nabla K_\ep(b\,\div u)
          -\ddj(c\nabla K_\ep a).
    \end{array}\right.\end{equation}

\subsubsection*{Estimate of $a$}

Taking the $L^2$ scalar product of the first equation  of \eqref{eq:awj} with $a_j$ and integrating by parts in the second and third terms on the left yields:
\begin{multline}\label{eq:aj}\frac 12\frac d{dt}\|a_j\|_{L^2}^2+\|\nabla L_\ep a_j\|_{L^2}^2=\frac12\int_{\R^d}(\div v) a_j^2\,dx\\
+\int_{\R^d} ([v,\ddj]\cdot\nabla a)\,a_j\,dx -\int_{\R^d}\ddj\bigl(\div w+b\,\div u\bigr)\,a_j\,dx. \end{multline}
 Provided $-d/2<\sigma-1<d/2+1,$  combining H\"older inequality, embedding
 \eqref{eq:keyembedding} and  the commutator estimate \eqref{com:I} ensures that
$$\frac12\int_{\R^d}(\div v) a_j^2\,dx+\int_{\R^d} ([v,\ddj]\cdot\nabla a)\,a_j\,dx\leq 
Cc_j2^{-j(\sigma-1)}\|\nabla v\|_{\dot B^{\frac d2}_{2,1}}\|a\|_{\dot B^{\sigma-1}_{2,1}}\|a_j\|_{L^2}.$$
Furthermore, for $-d/2<\sigma-1\leq d/2$, Cauchy-Schwarz inequality, 
the product law \eqref{eq:prod1} and the definition of the space 
$\dot B^{\sigma-1}_{2,1}$ guarantee that
$$\int_{\R^d}\ddj(b\,\div u)\,a_j\,dx\leq Cc_j2^{-j(\sigma-1)}
\|b\|_{\dot B^{\frac d2}_{2,1}}\|\div u\|_{\dot B^{\sigma-1}_{2,1}}\|a_j\|_{L^2}.$$
Now, owing to the spectral localization given by $\ddj,$ Bernstein 
inequality and \eqref{eq:Kep}, we have
\begin{equation}\label{eq:bernstein}
\|\nabla L_\ep a_j\|_{L^2}^2\approx 2^{2j}\wh L_\ep^2(2^j)\|a_j\|^2_{L^2}\approx 
\|\Delta K_\ep a_j\|_{L^2}\|a_j\|_{L^2}.
\end{equation}
Hence after `simplification by $\|a_j\|_{L^2}$' in \eqref{eq:aj}, 
integration  on $[0,t]$ and use of \eqref{eq:bernstein}, we get for some absolute 
constant $\kappa_0,$
$$\displaylines{\|a_j(t)\|_{L^2}+ \kappa\int_0^t  \|K_\ep\Delta a_j\|_{L^2}\,d\tau\leq \|a_{0,j}\|_{L^2}+\int_0^t\|\div w_j\|_{L^2}\,d\tau
\hfill\cr\hfill +C2^{-j(\sigma-1)}\int_0^t c_j\bigl(\|\nabla v\|_{\dot B^{\frac d2}_{2,1}}\|a\|_{\dot B^{\sigma-1}_{2,1}}
+\|\div u\|_{\dot B^{\sigma-1}_{2,1}}\|b\|_{\dot B^{\frac d2}_{2,1}}\bigr)d\tau.}$$
Then,  multiplying by $2^{j(\sigma-1)}$ and summing up on all $j$'s such that $2^j\wh L_\ep(2^j)<\nu_0$ gives
\begin{multline}\label{eq:LF1}
\|a(t)\|_{\dot B^{\sigma-1}_{2,1}}^\ell+\kappa_0\int_0^t  \|K_\ep\Delta a\|_{\dot B^{\sigma-1}_{2,1}}^\ell\,d\tau\leq 
\|a_0\|_{\dot B^{\sigma-1}_{2,1}}^\ell\\+\int_0^t\|\div w\|_{\dot B^{\sigma-1}_{2,1}}^\ell\,d\tau+C\int_0^t\bigl(\|\nabla v\|_{\dot B^{\frac d2}_{2,1}}\|a\|_{\dot B^{\sigma-1}_{2,1}}+\|\div u\|_{\dot B^{\sigma-1}_{2,1}}\|b\|_{\dot B^{\frac d2}_{2,1}}\bigr)\,d\tau.
\end{multline}
\subsubsection*{Estimate of $w$ at regularity level $\sigma$}

Let us take the $L^2$ scalar product of the second equation of \eqref{eq:awj}  with $w_j.$ Handling the terms containing to $v$
 as previously, simplifying by $\|w_j\|_{L^2}$ and integrating, we get for 
 $\sigma\in(-d/2,1+d/2],$
$$\displaylines{\|w_j(t)\|_{L^2}+ \int_0^t   \|w_j\|_{L^2}\,d\tau\leq \|w_{0,j}\|_{L^2}+C2^{-j\sigma}
\int_0^tc_j\|\nabla v\|_{\dot B^{\frac d2}_{2,1}}\|w\|_{\dot B^{\sigma}_{2,1}} \,d\tau
\hfill\cr\hfill+\int_0^t  \Bigl(\|\nabla K_\ep\div(w_j- \nabla K_\ep a_j)\|_{L^2} + 
\|\ddj\bigl([v,\nabla K_\ep]\cdot\nabla  a\bigr)\|_{L^2}
+ \|\ddj\nabla K_\ep(b\.\div u)\|_{L^2}\Bigr)d\tau\hfill\cr\hfill
+\int_0^t\|\ddj(c\nabla K_\ep a)\|_{L^2}\,d\tau.}
$$
Hence, multiplying by $2^{j\sigma}$ and summing on all $j$'s such that  $2^j\wh L_\ep(2^j)<\nu_0,$
\begin{multline}\label{eq:LF2}
\|w(t)\|_{\dot B^{\sigma}_{2,1}}^\ell+\int_0^t  \|w\|_{\dot B^{\sigma}_{2,1}}^\ell\,d\tau\leq 
\|w_0\|_{\dot B^{\sigma}_{2,1}}^\ell+\int_0^t  \|\nabla K_\ep\div(w- \nabla K_\ep a)\|_{\dot B^\sigma_{2,1}}^\ell\,d\tau\\
+C\int_0^t\|\nabla v\|_{\dot B^{\frac d2}_{2,1}}\|w\|_{\dot B^{\sigma}_{2,1}} \,d\tau
+\int_0^t\bigl(\|[v,\nabla K_\ep]\cdot\nabla a\|_{\dot B^\sigma_{2,1}}^\ell
+ \|\nabla K_\ep(b\.\div u)\|_{\dot B^\sigma_{2,1}}^\ell
+\|c\nabla K_\ep a\|_{\dot B^\sigma_{2,1}}^\ell\bigr)d\tau.
\end{multline}
Looking at \eqref{eq:fucknot}, we see that
\begin{equation}\label{eq:LF3}
 \|\nabla K_\ep\div(w-  \nabla K_\ep a)\|_{\dot B^{\sigma}_{2,1}}^\ell \leq   C\nu_0^2  \|w-  \nabla  K_\ep a\|_{\dot B^{\sigma}_{2,1}}^\ell.
\end{equation}
For the last term of \eqref{eq:LF2}, using \eqref{eq:prod1} and the low frequency cut-off yields
\begin{equation}\label{eq:LF4}
\|\nabla K_\ep(b\,\div u)\|_{\dot B^{\sigma}_{2,1}}^\ell
\leq C\nu_0^2   \|b\,\div u\|_{\dot B^{\sigma-1}_{2,1}}^\ell
\leq C\nu_0^2  \|b\|_{\dot B^{\frac d2}_{2,1}}\|\div u\|_{\dot B^{\sigma-1}_{2,1}}.
\end{equation}
To handle the commutator term, we use $K_\ep=L_\ep^2.$ This enables us to write that:
$$[v,\nabla K_\ep]\cdot\nabla a=[v,\nabla L_\ep]\cdot\nabla L_\ep a+\nabla L_\ep[v,L_\ep]\cdot\nabla a.$$
Therefore, thanks to Inequalities  \eqref{com:II}  and \eqref{com:IIb}
with $c=v^k$ (for $k=1,\cdots,d$)  and $h=\nabla L_\ep a$ or $\nabla a,$ respectively, we have
$$\begin{aligned}
\|[v,\nabla K_\ep]\cdot\nabla a\|_{\dot B^\sigma_{2,1}}^\ell &\lesssim
\|[v,\nabla L_\ep]\cdot\nabla L_\ep a\|_{\dot B^\sigma_{2,1}}^\ell
+\|\nabla L_\ep[v,L_\ep]\cdot\nabla a\|_{\dot B^\sigma_{2,1}}^\ell\\
&\lesssim
\|[v,\nabla L_\ep]\cdot\nabla L_\ep a\|_{\dot B^\sigma_{2,1}}
+\nu_0\|[v,L_\ep]\cdot\nabla a\|_{\dot B^\sigma_{2,1}}\\
&\lesssim \|\nabla v\|_{\dot B^{\frac d2}_{2,1}}\|\nabla L_\ep a\|_{\dot B^\sigma_{2,1}}+
\|\nabla v\|_{\dot B^{\sigma}_{2,1}}\|\nabla L_\ep a\|_{L^\infty}
+\nu_0\|\nabla v\|_{\dot B^{\frac d2}_{2,1}} \|a\|_{\dot B^\sigma_{2,1}},
\end{aligned}$$
and the second term in the right-hand side is not needed if $\sigma\leq d/2.$
\smallbreak
Finally, we have 
$$\|c\nabla K_\ep a\|_{\dot B^\sigma_{2,1}}^\ell\lesssim 
\|c\|_{\dot B^{\frac d2}_{2,1}}\|\nabla K_\ep a\|_{\dot B^\sigma_{2,1}}+
\|c\|_{\dot B^{\sigma}_{2,1}}\|\nabla K_\ep a\|_{L^\infty}$$
and the second term in the right-hand side is not needed if $\sigma\leq d/2.$
In the end, reverting to 
\eqref{eq:LF2} yields 
\begin{multline}\label{eq:LF8}
\|w(t)\|_{\dot B^{\sigma}_{2,1}}^\ell+\int_0^t  \|w\|_{\dot B^{\sigma}_{2,1}}^\ell\,d\tau\leq 
\|w_0\|_{\dot B^{\sigma}_{2,1}}^\ell+\nu_0^2\int_0^t \|w- \nabla K_\ep a\|_{\dot B^{\sigma}_{2,1}}^\ell\,d\tau\\
+C\int_0^t\bigl(\nu_0^2 \|b\|_{\dot B^{\frac d2}_{2,1}}\|u\|_{\dot B^{\sigma}_{2,1}} +\|c\|_{\dot B^{\frac d2}_{2,1}}\|\nabla K_\ep a\|_{\dot B^{\sigma}_{2,1}}+\|c\|_{\dot B^\sigma_{2,1}}\|\nabla K_\ep a\|_{L^\infty}
\\+\|\nabla v\|_{\dot B^{\frac d2}_{2,1}}\|(w,a,\nabla L_\ep a)\|_{\dot B^{\sigma}_{2,1}}+
\|\nabla L_\ep a\|_{L^\infty}\|v\|_{\dot B^{\sigma+1}_{2,1}}\bigr)d\tau,
\end{multline}
where the terms with $L^\infty$ norms of $\nabla K_\ep a$ or $\nabla L_\ep a$ are  absent if $\sigma\leq d/2.$
\medbreak
Putting this inequality together with \eqref{eq:LF1} allows to absorb all the 
linear terms in the right-hand side provided that $\nu_0$ is chosen small enough.
We get \begin{multline}\label{eq:LF9}
\|a(t)\|_{\dot B^{\sigma-1}_{2,1}}^\ell+\|w(t)\|_{\dot B^{\sigma}_{2,1}}^\ell
+\frac12\int_0^t\|(K_\ep\nabla a,w)\|_{\dot B^{\sigma}_{2,1}}^\ell\,d\tau \lesssim 
\|a_0\|_{\dot B^{\sigma-1}_{2,1}}^\ell+\|w_0\|_{\dot B^{\sigma}_{2,1}}^\ell\\
+\int_0^t\!\Bigl(\|b\|_{\dot B^{\frac d2}_{2,1}}\|u\|_{\dot B^{\sigma}_{2,1}}
+\|c\|_{\dot B^{\frac d2}_{2,1}}\|\nabla K_\ep a\|_{\dot B^{\sigma}_{2,1}}+\|c\|_{\dot B^\sigma_{2,1}}\|\nabla K_\ep a\|_{L^\infty}\\
+\|\nabla v\|_{\dot B^{\frac d2}_{2,1}}\bigl(\|(a,\nabla a,\nabla^2L_\ep a)\|_{\dot B^{\sigma-1}_{2,1}}
+\|w\|_{\dot B^{\sigma}_{2,1}}\bigr)
+\|\nabla L_\ep a\|_{L^\infty}\|v\|_{\dot B^{\sigma+1}_{2,1}}\Bigr)d\tau.
\end{multline}
Again, the terms with $L^\infty$ norms of $\nabla K_\ep a$ or $\nabla L_\ep a$ are
not needed if $\sigma\leq d/2.$

\subsubsection*{Estimate of $w$ at regularity level $\sigma-1$}

Note that we also have 
$$\|\nabla K_\ep\div(w-\nabla K_\ep a)\|_{\dot B^{\sigma-1}_{2,1}}^\ell 
\leq C\bigl(\nu_0^2  \|w\|_{\dot B^{\sigma-1}_{2,1}}^\ell
+c\|\nabla K_\ep a\|_{\dot B^{\sigma}_{2,1}}^\ell\bigr)\cdotp$$
Hence, arguing as for  proving  \eqref{eq:LF8} but using this time that
$$\|\nabla K_\ep(b\,\div u)\|^\ell_{\dot B^{\sigma-1}_{2,1}}\leq\nu_0
\|b\,\div u\|^\ell_{\dot B^{\sigma-1}_{2,1}}\leq C\nu_0
\|b\|_{\dot B^{\frac d2}_{2,1}}\|u\|_{\dot B^{\sigma}_{2,1}},$$
 we  get if $\nu_0$ is small enough:
\begin{multline}\label{eq:LF10}
\|w(t)\|_{\dot B^{\sigma-1}_{2,1}}^\ell
+\frac12\int_0^t  \|w\|_{\dot B^{\sigma-1}_{2,1}}^\ell\,d\tau\leq 
\|w_0\|_{\dot B^{\sigma-1}_{2,1}}^\ell+\nu_0\int_0^t \|\nabla K_\ep a\|_{\dot B^{\sigma}_{2,1}}^\ell\,d\tau\\
+C\int_0^t\bigl(\nu_0^2 \|b\|_{\dot B^{\frac d2}_{2,1}}\|u\|_{\dot B^{\sigma}_{2,1}}
+\|\nabla v\|_{\dot B^{\frac d2}_{2,1}}\|w,a,\nabla L_\ep a\|_{\dot B^{\sigma-1}_{2,1}}\bigr)d\tau
+C\int_0^t \|c\|_{\dot B^{\frac d2}_{2,1}}\|\nabla K_\ep a\|_{\dot B^{\sigma-1}_{2,1}}\,d\tau.
\end{multline}
Note that  for all $\sigma'\in\R,$ we may write
\begin{equation}\label{eq:wu}\|w-u\|_{\dot B^{\sigma'}_{2,1}}^\ell\leq C  \|\nabla K_\ep a\|_{\dot B^{\sigma'}_{2,1}}^\ell \leq C\nu_0^2\|a\|_{\dot B^{\sigma'-1}_{2,1}}.\end{equation}
 This allows  to replace  $\|w\|_{\dot B^{\sigma}_{2,1}}^\ell$ by $\|u\|_{\dot B^{\sigma}_{2,1}}^\ell$
 in the left-hand side of \eqref{eq:LF9}  and 
  $\|w\|_{\dot B^{\sigma-1}_{2,1}}^\ell$ by $\|u\|_{\dot B^{\sigma-1}_{2,1}}^\ell$
in the first term of the left-hand side of \eqref{eq:LF10}, and thus to complete the proof of the low frequency
 parts of \eqref{eq:linear} and \eqref{eq:linearb}.

\begin{rmk}\label{r:fundamental}
 If $c=F(K_\ep a)$ for some smooth function $F$ vanishing at $0,$ 
 the last term of \eqref{eq:LF10}
 lacks time integrability. However, one can apply
\eqref{eq:LF9} with $\sigma-1$ instead of \eqref{eq:LF10} to bound  $u$ and $\nabla K_\ep a$  in $L^1(\R_+;\dot B^{\sigma-1}_{2,1}).$ We deduce a bound 
for  $L_\ep a$ in $L^2(\R_+;\dot B^{\sigma-1}_{2,1})).$ 
 \end{rmk}


\subsection{Second step: High frequencies estimates} 

We adapt the approach of \cite{CBD2} for the dissipative Euler system, 
and introduce the following ``Lyapunov" and ``dissipation rate" functionals

\begin{align}\label{eq:cLj}
\cL_j^2&:=\|(a_j,L_\ep a_j, u_j)\|_{L^2}^2-  2\int_{\R^d} a_j\div u_j\,dx
+2\int_{\R^d}(1\!+\!c)|\nabla L_\ep a_j|^2\,dx\\&\hspace{6cm}+  \int_{\R^d}
|\nabla \cP u_j|^2\,dx+   2\int_{\R^d}
(1\!+\!b)(\div u_j)^2\,dx, \\\label{eq:cHj}
   \cH_j^2&:=  \|u_j\|_{L^2}^2+\|\nabla\cP u_j\|_{L^2}^2+
   \int_{\R^d}(1\!+\!c)|\nabla L_\ep a_j|^2\,dx+ \int_{\R^d} (1\!+\!b)(\div u_j)^2\,dx.\nonumber
\end{align}
Note that, owing to  \eqref{eq:smallbc} and Young inequality, we have 
\begin{equation}\begin{aligned}\label{eq:approx}\cL_j\approx 
\|(\nabla L_\ep a_j,\nabla u_j)\|_{L^2} 
\approx \cH_j\quad\hbox{if }\ 2^j\wh L_\ep(2^j)\geq \nu_0,\andf\\
\cL_j\approx \|(a_j,u_j,\nabla u_j)\|_{L^2},\quad
 \cH_j\gtrsim  2^{j}\wh L_\ep(2^j)\cL_j
 \quad\hbox{if }\ 2^j\wh L_\ep(2^j)\leq \nu_0.
\end{aligned}\end{equation}
Hence, if multiplying $\cH_j$ and $\cL_j$ by $2^{j\sigma}$ and summing up on those $j$'s
such that $2^j\wh L_\ep(2^j)\geq \nu_0,$ one gets  the parts of $X^\sigma_{a,u}$
and $H^\sigma_{a,u}$ corresponding to  the high frequencies.
\medbreak
We  shall implement an energy method so has to compute 
the time derivatives of all the terms that constitute $\cL_j^2.$
To proceed,  the first step is of course to localize \eqref{eq:eulerconv} by means of $\ddj.$
However, in order to avoid loss of derivatives, one needs to be very careful how one writes 
the terms with nonconstant coefficients after localization. The idea is to obtain for $(a_j,u_j)$ a system with the same structure as \eqref{eq:eulerconv}, up to `manageable' commutator terms.
Having this in mind, a suitable way of writing the localized system is
 \begin{equation}\label{eq:eulerconvj}
  \left\{  \begin{array}{l}
         a_{j,t} + v\cdot\nabla a_j + \ddj\bigl((1+b)\div u\bigr)= [v,\ddj]\cdot\nabla a, 
         \\ u_{j,t} + v\cdot\nabla u_j+  u_j + \ddj((1+c)\nabla K_\ep a)
         =[v,\ddj]\cdot\nabla u.\\
    \end{array}\right.
\end{equation}
Let us first  look at  the time derivative of $\|(a_j,u_j)\|_{L^2}^2.$  
Taking the $L^2$ scalar product of \eqref{eq:eulerconvj} with $(a_j,u_j),$ 
and integrating by parts in the convection terms  gives: 
\begin{multline}\label{eq:conv1}\frac12\frac d{dt}\|(a_j,u_j)\|_{L^2}^2 +  \|u_j\|_{L^2}^2+\int_{\R^d}(\Id-K_\ep)a_j\,\div u_j\,dx+\int_{\R^d} u_j\cdot \ddj(c\nabla K_\ep a)\,dx
\\+\int_{\R^d} a_j\ddj(b\,\div u)\,dx
=\int_{\R^d} ([v,\ddj]\cdot\nabla a\,a_j+ ([v,\ddj]\cdot\nabla u)\cdot u_j)\,dx+\frac12\int_{\R^d}(a_j^2+ |u_j|^2)\div v\,dx.
\end{multline}
To eliminate the third term of the left-hand side, we need to look at  $\|L_\ep a_j\|_{L^2}^2.$
We have
$$
  L_\ep a_{j,t} + v\cdot\nabla L_\ep a_j + L_\ep\ddj\bigl((1+b)\div u\bigr)= [v,L_\ep\ddj]\cdot\nabla a.
$$
Hence, 
$$\displaylines{\frac12\frac d{dt}\|L_\ep a_j\|_{L^2}^2 + \int_{\R^d} L_\ep a_j\,L_\ep \div u_j\,dx
+ \int_{\R^d} L_\ep a_j \,L_\ep\ddj(b\,\div u)\,dx\hfill\cr\hfill
=\frac12\int_{\R^d} (L_\ep a_j)^2\div v\,dx+\int_{\R^d} [v,L_\ep\ddj]\cdot\nabla a\: L_\ep a_j\,dx.}$$
Remembering $L_\ep^2=K_\ep$ and ${}^t\!L_\ep=L_\ep,$ and adding up this relation to \eqref{eq:conv1} gives
\begin{equation}\label{eq:conv2}
\frac12\frac d{dt}\|(a_j,L_\ep a_j,u_j)\|_{L^2}^2 +  \|u_j\|_{L^2}^2+
\int_{\R^d} a_j \div u_j \,dx = I^1_j,
\end{equation}
where
\begin{multline*}
I^1_j=-\int_{\R^d} a_j\ddj(b\,\div u)\,dx
-\int_{\R^d} L_\ep a_j \,L_\ep\ddj(b\,\div u)\,dx+\int_{\R^d} [v,\ddj]\cdot\nabla a\,a_j\\+\int_{\R^d} ([v,\ddj]\cdot\nabla u)\cdot u_j\,dx
+\int_{\R^d} [v,L_\ep\ddj]\cdot\nabla a\: L_\ep a_j\,dx\\
+\frac12\int_{\R^d} \bigl(a_j^2+(L_\ep a_j)^2+|u_j|^2\bigr)\div v\,dx
+\int_{\R^d}\ddj(c\nabla K_\ep a)\cdot u_j\,dx.
\end{multline*}
Next,  to show the third term  of $\cH_j^2,$ one can 
compute the time derivative of $(a_j|\div u_j)_{L^2}.$
To do this, it is better to rewrite the equation for $a_j$ as follows: 
$$a_{j,t}+v\cdot\nabla a_j+(1+b)\div u_j=[v,\ddj]\cdot\nabla a
+[b,\ddj]\div u.$$
Then, using the fact that
$$\int_{\R^d} \bigl(a_j\div (v\cdot\nabla u_j)+\div u_j \,v\cdot\nabla a_j\bigr)dx
=\int_{\R^d} a_j\bigl({\rm Tr}(\nabla v\cdot\nabla u_j)-\div v\,\div u_j\bigr)dx,$$
and that
\begin{multline*}
\int_{\R^d} a_j\div\ddj(c\nabla K_\ep a)\,dx
=\int_{\R^d}a_j\div \ddj[c,L_\ep]\nabla L_\ep a\,dx\\
+\int_{\R^d}\nabla L_\ep a_j\cdot[c,\ddj]\nabla L_\ep a\,dx
-\int_{\R^d}c|\nabla L_\ep a_j|^2\,dx,\end{multline*}
we get
\begin{equation}\label{eq:conv3}
\frac d{dt} \int_{\R^d} a_j \div u_j\,dx +    \int_{\R^d} a_j \div u_j\,dx +\int_{\R^d}(1\!+\!b)(\div u_j)^2\,dx 
-\int_{\R^d}(1\!+\!c)|\nabla L_\ep a_j|^2\,dx=I^2_j,
\end{equation}
where
\begin{multline*}
I^2_j
= \int_{\R^d} a_j\bigl(\div v\,\div u_j-{\rm Tr}(\nabla v\cdot\nabla u_j)\bigr)dx
+\int_{\R^d}a_j\div \ddj[L_\ep,c]\nabla L_\ep a\,dx\\
+\int_{\R^d}\nabla L_\ep a_j\cdot[\ddj,c]\nabla L_\ep a\,dx
+\int_{\R^d}\bigl(a_j\,\div [v,\ddj]\cdot\nabla u +  \div u_j ( [v,\ddj]\nabla a+[b,\ddj]\div u)\bigr)dx.\end{multline*}
So, subtracting \eqref{eq:conv3} from \eqref{eq:conv2} eventually yields 
\begin{multline}\label{eq:conv2aa}
\frac12\frac d{dt}\Bigl(\|(a_j,L_\ep a_j,u_j)\|_{L^2}^2 -2\int_{\R^d} a_j \div u_j \, dx\Bigr) + \|u_j\|_{L^2}^2+ \int_{\R^d}(1\!+\!c)|\nabla L_\ep a_j|^2\,dx\\
-\int_{\R^d}(1\!+\!b)(\div u_j)^2\,dx 
 = I^1_j- I^2_j.
\end{multline}
Next, to handle the term with  $\|\nabla \cP u_j\|_{L^2}^2,$ we   apply  $\ddj\cP$ to the velocity equation and get
$$\cP u_{j,t}+\cP u_j +v\cdot\nabla\cP u_j=[v,\ddj\cP]\cdotp\nabla u-\cP\ddj(c\nabla K_\ep a),$$
which immediately implies after taking the scalar product with $\cP u_j$
and integrating by parts in the convection term:
\begin{multline}\label{eq:conv4}
\frac12\frac d{dt}\|\nabla\cP u_j\|_{L^2}^2+ \|\nabla\cP u_j\|_{L^2}^2 =I_j^3:=
\frac12\int_{\R^d}\div v\,|\nabla\cP u_j|^2\,dx \\-\int_{\R^d}\!\! {\rm Tr}(\nabla\cP u_j\cdot D\cP u_j\cdot Dv)\,dx
+\int_{\R^d} \!\!\nabla\cP u_j\cdot(\nabla [v,\ddj\cP]\cdotp\nabla u)\,dx
-\int_{\R^d}\!\!\nabla\cP u_j\cdot \nabla \cP\ddj(c\nabla K_\ep a)\,dx.
\end{multline}
Let us finally compute  the time derivative of 
$\|\sqrt{1\!+\! c}\,\nabla L_\ep a_j\|_{L^2}^2+\|\sqrt{1\!+\!b}\,\div u_j\|_{L^2}^2.$
First, taking the $L^2$ scalar product of the following relation
$$\nabla L_\ep a_{j,t}+\nabla L_\ep\ddj((1\!+\!b)\,\div u)+\nabla L_\ep\ddj (v\cdot\nabla a)=0$$
with $(1+ c)\nabla L_\ep a_j,$ and using the fact that
\begin{multline*}
\int_{\R^d}(1\!+\!c)\nabla L_\ep\ddj (v\cdot\nabla a)\cdot\nabla L_\ep a_j\,dx
=\int_{\R^d}(1\!+\!c)[\nabla L_\ep\ddj, v\cdot\nabla] a\cdot\nabla L_\ep a_j\,dx
\\-\frac12\int_{\R^d}|\nabla L_\ep a_j|^2\div\bigl((1\!+\! c)v\bigr)\,dx,\end{multline*}
we find that
\begin{multline}\label{eq:conv10}
\frac12\frac d{dt}\biggl(\int_{\R^d}(1\!+\! c)|\nabla L_\ep a_j|^2\,dx
-\int_{\R^d}\bigl(c_t+\div\bigl((1\!+\! c)v\bigr) |\nabla L_\ep a_j|^2\,dx\biggr)\\
+\int_{\R^d} (1\!+\!c)\nabla L_\ep\ddj((1\!+\!b)\div u)\cdot\nabla L_\ep a_j\,dx
+\int_{\R^d}(1\!+\!c)[\nabla L_\ep\ddj,v\cdot\nabla] a)\cdot\nabla L_\ep a_j\,dx=0.
\end{multline}
Next,  because
$$\div u_{j,t}+\div u_j+\div((1+c)\nabla K_\ep a_j)+\div (v\cdot\nabla u_j)=\div[v,\ddj]\cdot\nabla u+\div[c,\ddj]\cdot\nabla K_\ep a,$$
we discover that
\begin{multline}\label{eq:conv11}\frac12\biggl(\frac d{dt} \int_{\R^d}(1+b)(\div u_j)^2 dx
- \int_{\R^d} b_t\, (\div u_j)^2 \,dx \biggr)
+\int_{\R^d}(1+b)(\div u_j)^2\,dx 
\\+\int_{\R^d} ((1+b)\div u_j)
\div ( (1\!+\!c) \nabla K_\ep a_j)\,dx
+\int_{\R^d}(1+b)\div u_j\,\div (v\cdot\nabla u_j)\,dx\\ =\int_{\R^d}(1+b)\div u_j \,\div[v,\ddj]\cdot\nabla u\,dx.\end{multline}
Due to \eqref{eq:bv}, we have
$$
\int_{\R^d}(1\!+\!b)(\div u_j)\div(v\cdot\nabla u_j)\,dx-\frac12 \int_{\R^d}b_t(\div u_j)^2\,dx=\int_{\R^d} (1+b)\div u_j {\rm Tr}(Dv\cdot Du_j)\,dx.$$
Hence adding up \eqref{eq:conv10} and \eqref{eq:conv11} leads to 
\begin{equation}\label{eq:conv12}
\frac12\frac d{dt} \int_{\R^d} \bigl(|\nabla L_\ep a_j|^2 +(1+b)(\div u_j)^2\bigr)dx
+\int_{\R^d}(1+b)(\div u_j)^2\,dx=\sum_{i=0}^6 R_j^i
\end{equation}
$$\begin{aligned}
\with  R_j^0&:=\int_{\R^d}(1\!+\!b)\div u_j\,\div[c,\ddj]\nabla K_\ep a\,dx,\\
R_j^1&:=\int_{\R^d}(1\!+\!c)\nabla L_\ep[b,\ddj]\div u\cdot\nabla L_\ep a_j\,dx,\\
R_j^2&:=-\int_{\R^d}(1\!+\!c)
[\nabla L_\ep\ddj,v\cdot\nabla a]\cdot\nabla L_\ep a_j\,dx,\\
R_j^3&:=-\int_{\R^d} (1+b)\div u_j{\rm Tr}(Dv\cdot Du_j)\,dx ,\\
R_j^4&:=\int_{\R^d}(1+b)\: \div u_j\div[v,\ddj]\cdot\nabla u \,dx\\
R_j^5&:=\int_{\R^d}(1+b)\: \div u_j\div[L_\ep,c]\nabla L_\ep a_j\,dx.
\\{\andf R_j^6 }& := \frac12\int_{\R^d}
\bigl(c_t+\div\bigl((1\!+\!c)v\bigr)|\nabla L_\ep a_j|^2\,dx.
\end{aligned}$$
To get \eqref{eq:conv12}, the key point is 
the following cancellation property between the   
third term  of \eqref{eq:conv10} and the fourth term of  \eqref{eq:conv11}:
\begin{align*}
   \int_{\R^d} ((1\!+\!b)\div u_j)
\div ( (1\!+\!c) \nabla K_\ep a_j)\,dx &=
-R_j^5 - \int_{\R^d}( (1\!+\!c) \nabla L_\ep ((1+b)\div u_j)\cdot
\nabla L_\ep a_j) \,dx 
\\=-R_j^5&-R_j^1 -\int_{\R^d} (1\!+\!c) \nabla L_\ep \ddj ((1+b)\div u)
\cdot \nabla L_\ep a_j) \,dx.
\end{align*}
So finally, adding \eqref{eq:conv4} and twice  \eqref{eq:conv12} to \eqref{eq:conv2aa}, we discover that
\begin{equation}\label{eq:conv13}
    \frac{1}{2} \frac{d}{dt} \cL^2_j + \cH^2_j = I_j^1-I_j^2
    +I_j^3+2\sum_{i=0}^5 R_j^i.
\end{equation}
Now, it is just a matter of  bounding all the terms of the right-hand side. 
The most tricky part is  to estimate the commutator terms in $I_j^1,$  $R^1_j$ and $R^2_j.$ 
For expository purpose, we admit these estimates, the reader
being referred to  Subsection \ref{ss:com} for the proof.

\subsubsection*{Estimating $I_j^1$}
From H\"older inequality, we  have
$$\displaylines{I^1_j\leq \|a_j\|_{L^2}\|\ddj(b\,\div u)\|_{L^2}
+\|L_\ep a_j\|_{L^2} \|L_\ep\ddj(b\,\div u)\|_{L^2}
+\|[\ddj,v]\cdot\nabla a\|_{L^2}\|a_j\|_{L^2}\hfill\cr\hfill
+\|u_j\|_{L^2}\|\ddj(c\nabla K_\ep a)\|_{L^2}
+\| [v,\ddj]\cdot\nabla u\|_{L^2}\|u_j\|_{L^2}\hfill\cr\hfill
+\| [L_\ep\ddj, v]\cdot\nabla a\|_{L^2} \|L_\ep a_j\|_{L^2}
+\frac12(\|a_j\|_{L^2}^2+\|L_\ep a_j\|_{L^2}^2+\|u_j\|_{L^2}^2)\|\div v\|_{L^\infty}.}$$
The terms with $b\,\div u$ may be bounded thanks to the product laws 
\eqref{eq:prod1} and \eqref{eq:prod2} with $f=b$ and $g=\div u,$
and the commutators,  
by means of \eqref{com:I} and \eqref{com:III}.

In the end, we get
\begin{multline}\label{eq:Ij1}
I^1_j\leq Cc_j2^{-j\sigma}\bigl(\|\nabla v\|_{\dot B^{\frac d2}_{2,1}}\|(a,u)\|_{\dot B^{\sigma}_{2,1}}+\|b\|_{\dot B^{\frac d2}_{2,1}}\|\div u\|_{\dot B^{\sigma}_{2,1}}
+\|c\|_{\dot B^{\frac d2}_{2,1}}\|\nabla K_\ep a\|_{\dot B^\sigma_{2,1}}
\\ +\|c\|_{\dot B^\sigma_{2,1}}\|\nabla K_\ep a\|_{L^\infty}
+\|\div u\|_{L^\infty}\|b\|_{\dot B^{\sigma}_{2,1}}\bigr)\|(a_j,u_j)\|_{L^2},
\end{multline}
and the last two terms are not needed if $\sigma\leq d/2.$

\subsubsection*{Estimating $I_j^2$}

From H\"older  inequality, we infer that
\begin{multline*}I^2_j
\leq\|a_j\|_{L^2}\bigl(\|\nabla u_j\|_{L^2}\|\nabla v\|_{L^\infty}
+\|\div[v,\ddj]\cdot\nabla u\|_{L^2}
+\|\div \ddj[L_\ep,c]\nabla L_\ep a\|_{L^2}\bigr)\\
+\|\nabla L_\ep a_j\|_{L^2}\|[\ddj,c]\nabla L_\ep a\|_{L^2}
+ \|\div u_j\|_{L^2}\bigl(
\|[v,\ddj]\nabla a\|_{L^2}
+\|[b,\ddj]\div u\|_{L^2}\bigr)\cdotp\end{multline*}
The last two terms may be bounded by means of \eqref{com:I}.
For the one with $\div [v,\ddj]\cdot\nabla u,$ we use the
decomposition 
$$\partial_k [v,\ddj]\cdot\nabla u=\partial_k v\cdot\nabla u_j
-\ddj(\partial_kv\cdot\nabla u)+[v,\ddj]\nabla\partial_k u.$$
The $L^2$ norm of the
 last term of the right-hand side may be bounded according to \eqref{com:I}.
 The $L^2$ norm of the  first one is obviously bounded by $\|\nabla v\|_{L^\infty}\|\nabla u_j\|_{L^2}.$ 
 To bound the second term, one 
can take advantage of the  product laws \eqref{eq:prod1} and \eqref{eq:prod2}
with $f=\partial_kv$ and $g=\nabla u.$
Finally, thanks to \eqref{com:IIb}, we have
$$
\begin{aligned}
\|\div \ddj[L_\ep,c]\nabla L_\ep a\|_{L^2}&\lesssim 
c_j2^{-j\sigma}\|\div[L_\ep,c]\nabla L_\ep a\|_{\dot B^\sigma_{2,1}}\\
&\lesssim 
c_j2^{-j\sigma}\|[L_\ep,c]\nabla L_\ep a\|_{\dot B^{\sigma+1}_{2,1}}\\
&\lesssim c_j2^{-j\sigma}
\bigl(\|\nabla c\|_{\dot B^{\frac d2}_{2,1}}
\|L_\ep a\|_{\dot B^{\sigma+1}_{2,1}}+
\|\nabla c\|_{\dot B^{\sigma}_{2,1}}
\|\nabla L_\ep a\|_{L^\infty}\bigr)\end{aligned}$$
and, by virtue of  \eqref{com:I},
$$\|[\ddj,c]\nabla L_\ep a\|_{L^2}\lesssim c_j2^{-j\sigma}
\|\nabla c\|_{\dot B^{\frac d2}_{2,1}}\|L_\ep a\|_{\dot B^\sigma_{2,1}}
.$$
This  leads to 
\begin{multline}\label{eq:Ij2}
I_j^2\lesssim c_j2^{-j\sigma}\Bigl(
\|\div u_j\|_{L^2}
\bigl(\|\nabla v\|_{\dot B^{\frac d2}_{2,1}}
\|a\|_{\dot B^{\sigma}_{2,1}}+
\|\nabla b\|_{\dot B^{\frac d2}_{2,1}}
\|u\|_{\dot B^{\sigma}_{2,1}}\bigr)\\+
\|a_j\|_{L^2} \bigl(\|\nabla v\|_{\dot B^{\frac d2}_{2,1}}\|\nabla u\|_{\dot B^{\sigma}_{2,1}}
+  \|\nabla v\|_{\dot B^{\sigma}_{2,1}}\|\nabla u\|_{L^\infty}
+\|\nabla c\|_{\dot B^{\frac d2}_{2,1}}
\|L_\ep a\|_{\dot B^{\sigma+1}_{2,1}}+
\|\nabla c\|_{\dot B^{\sigma}_{2,1}}
\|\nabla L_\ep a\|_{L^\infty}\bigr)\\
+\|\nabla L_\ep a_j\|_{L^2}
\|\nabla c\|_{\dot B^{\frac d2}_{2,1}}\|L_\ep a\|_{\dot B^{\sigma}_{2,1}}\Bigr)\cdotp\end{multline}
and the  terms with $L^\infty$ norms are not needed if $\sigma\leq d/2.$

\subsubsection*{Estimating $I_j^3$}

We start with the obvious inequality:
$$I_j^3\leq C\|\nabla\cP u_j\|_{L^2}
\bigl(\|\nabla v\|_{L^\infty}\|\nabla\cP u_j\|_{L^2}
+\|\nabla[v,\ddj\cP]\cdot\nabla u\|_{L^2}
+\|\nabla\cP\ddj(c\nabla K_\ep a)\|_{L^2}\bigr)$$
and  write that
$$\partial_k[v,\ddj\cP]\cdot\nabla u=\partial_k v\cdot\nabla\cP  u_j
-\ddj\cP(\partial_kv\cdot\nabla u)+[v,\ddj\cP]\nabla\partial_k u.$$
The commutator with $v$ may be bounded as the similar term in $I_j^2.$ As for the last term of $I_j^3$, we observe that, since $\cP\nabla=0,$ we have
$$\nabla\cP(c\nabla K_\ep a)=[\nabla\cP,c]\nabla K_\ep a.$$
Hence this term may be handled by \eqref{com:II} with 
the constant operator $\cP$ instead of $L_\ep.$
We end up with
\begin{multline}\label{eq:Ij3}
I_j^3\leq Cc_j2^{-j\sigma}\bigl( 
\|\nabla v\|_{\dot B^{\frac d2}_{2,1}}\|\nabla u\|_{\dot B^{\sigma}_{2,1}}
+  \|\nabla v\|_{\dot B^{\sigma}_{2,1}}\|\nabla u\|_{L^\infty}\\
+ \|c\|_{\dot B^{\frac d2+1}_{2,1}}\|\nabla K_\ep a\|_{\dot B^\sigma_{2,1}}
+ \|c\|_{\dot B^{\sigma+1}_{2,1}}\|\nabla K_\ep a\|_{L^\infty}
\bigr)\|\nabla\cP u_j\|_{L^2},\end{multline}
and the terms with the $L^\infty$ norm are not needed if $\sigma\leq d/2.$

\subsubsection*{Estimating $R_j^0$}

To bound this term, it suffices to apply H\"older inequality
then to use \eqref{com:IV} with $L_0$ (that is, the identity 
operator), $b=c$ and $z=\nabla K_\ep a.$ We get
\begin{equation}\label{eq:Rj0}
R_j^0 \leq Cc_j2^{-j\sigma}
\|\div u_j\|_{L^2}\bigl(\|\nabla c\|_{\dot B^{\frac d2}_{2,1}}\|\nabla K_\ep a\|_{\dot B^{\sigma}_{2,1}}
+\|\nabla c\|_{\dot B^{\sigma}_{2,1}}\|\nabla K_\ep a\|_{L^\infty}
\bigr)\end{equation}
and the last term is  not needed if $\sigma\leq d/2.$

\subsubsection*{Estimating  $R_j^1$}

Remembering that $\|c\|_{L^\infty}$ is small and applying Inequality  \eqref{com:IV} to $z=\div u,$ we readily get
\begin{equation}\label{eq:Rj1}
R_j^1 \leq Cc_j2^{-j\sigma} \|\nabla L_\ep a_j\|_{L^2}
\bigl(\|\nabla b\|_{\dot B^{\frac d2}_{2,1}}\|\div u\|_{\dot B^{\sigma}_{2,1}}
+\|\div u\|_{L^\infty}\|\nabla L_\ep b\|_{\dot B^{\sigma}_{2,1}}
\bigr)\end{equation}
and the last term is  not needed if $\sigma\leq d/2.$

\subsubsection*{Estimating   $R^2_j$}  
Because we strive for bounds that are independent of $\ep,$
we   have to assume that $\nabla^2 v \in L^1(\R_+;L^\infty),$
that is one more space derivative than for the classical compressible Euler system. 
Now, leveraging  Inequality \eqref{eq:Rj2a}, we readily get
\begin{equation}\label{eq:Rj2}
 R_j^2 \leq Cc_j2^{-j\sigma} \|\nabla L_\ep a_j\|_{L^2}
\bigl(\|a\|_{\dot B^{\sigma}_{2,1}}\|v\|_{\dot B^{\frac d2+2}_{2,1}}\\
+\|v\|_{\dot B^{\frac d2+1}_{2,1}}\|\nabla L_\ep a\|_{\dot B^{\sigma}_{2,1}}\bigr)\cdotp\end{equation}

\subsubsection*{Estimating  $R_j^3$}  

Under assumption \eqref{eq:smallbc}, it is obvious that
\begin{equation}\label{eq:Rj3}R_j^3\lesssim \|\nabla v\|_{L^\infty}\|\nabla u_j\|_{L^2}^2
\lesssim c_j2^{-j\sigma}\|\nabla u_j\|_{L^2}\|\nabla u\|_{\dot B^\sigma_{2,1}}
\|\nabla v\|_{\dot B^{\frac d2}_{2,1}}.\end{equation}

\subsubsection*{Estimating  $R_j^4$}  

With the summation convention on repeated indices, we have
$$\div[v^m,\ddj]\partial_m u =\partial_k v^m \partial_m u^k_j-\ddj(\partial_kv^m\,\partial_m u^k)+[v^m,\ddj]\partial_m\div u.$$
So, using \eqref{com:I}  as well as  product laws \eqref{eq:prod1} and \eqref{eq:prod2}, we get
\begin{equation}\label{eq:Rj4}
R_j^4 \leq Cc_j2^{-j\sigma} \|\div u_j\|_{L^2}
\bigl(\|\nabla v\|_{\dot B^{\frac d2}_{2,1}}\|\nabla u\|_{\dot B^{\sigma}_{2,1}}
+\|\nabla u\|_{L^\infty}\|\nabla v\|_{\dot B^{\sigma}_{2,1}}\bigr),
\end{equation}
and the second term  may be omitted  if $\sigma\leq d/2.$

\subsubsection*{Estimating $R_j^5$}
By H\"older inequality and Condition \eqref{eq:smallbc}, we have
$$R_j^5\lesssim \|\div u_j\|_{L^2}\|\div[L_\ep,c]\nabla L_\ep a_j\|_{L^2}.$$
We observe that 
$$\div[L_\ep,c]\nabla L_\ep a_j=L_\ep\bigl(\nabla c\cdot\nabla L_\ep a_j\bigr)
-\nabla c\cdot L_\ep\nabla L_\ep a_j+[L_\ep,c]\Delta L_\ep a_j.$$
On the one hand, due to \eqref{eq:Kep}, it is obvious that 
$$\|L_\ep(\nabla c\cdot\nabla L_\ep a_j)\|_{L^2}
+\|\nabla c\cdot L_\ep\nabla L_\ep a_j\|_{L^2}\lesssim 
\|\nabla c\|_{L^\infty}\|\nabla L_\ep a_j\|_{L^2}.$$
On the other hand, \eqref{com:V} and Bernstein inequality guarantee that
$$\|[L_\ep,c]\Delta L_\ep a_j\|_{L^2}\lesssim 
\|\nabla c\|_{L^\infty}\|\nabla L_\ep a_j\|_{L^2}.$$
Hence 
\begin{equation}\label{eq:Rj5}
R_j^5 \leq Cc_j2^{-j\sigma} \|\div u_j\|_{L^2}
\|\nabla c\|_{L^\infty}\|\nabla L_\ep a\|_{\dot B^{\sigma}_{2,1}}.\end{equation}

\subsubsection*{Estimating  $R_j^6$}  Finally, we have  
\begin{align}\label{eq:Rj6}
R_j^6&\leq \frac12\| c_t\!+\!\div((1\!+\! c)v)\|_{L^\infty}
\|\nabla L_\ep a_j\|_{L^2}^2\nonumber\\&\leq \frac{c_j}2 \:2^{-j\sigma} 
\|c_t\! + \!\div((1\!+\! c)v)\|_{L^\infty}
\|\nabla L_\ep a_j\|_{L^2}\|\nabla L_\ep a\|_{\dot B^\sigma_{2,1}}.
\end{align}

\subsubsection*{Third step: Putting everything together}

Plugging \eqref{eq:Ij1}, \eqref{eq:Ij2}, \eqref{eq:Ij3}, \eqref{eq:Rj0},  \eqref{eq:Rj1},  \eqref{eq:Rj2},  \eqref{eq:Rj3}, \eqref{eq:Rj4}, \eqref{eq:Rj5} and \eqref{eq:Rj6}
in \eqref{eq:conv13} and remembering \eqref{eq:approx}, we arrive (for some universal constant
$\kappa_0$) at 
\begin{multline*}
  \frac12\frac d{dt}\cL_j^2+\kappa_0\min (1,2^{2j}\wh K_\ep(2^j))\cL_j^2\leq C c_j2^{-j\sigma}\cL_j 
  \Bigl(\|v\|_{\dot B^{\frac d2+1}_{2,1}}\|(a,\nabla L_\ep a, u,\nabla u)\|_{\dot B^{\sigma}_{2,1}}\\
  + \|(b,\nabla b)\|_{\dot B^{\frac d2}_{2,1}}\|\div u\|_{\dot B^{\sigma}_{2,1}}
  +\|\nabla b\|_{\dot B^{\frac d2}_{2,1}}\|u\|_{\dot B^{\sigma}_{2,1}}
  +\|v\|_{\dot B^{\frac d2+2}_{2,1}}\|a\|_{\dot B^\sigma_{2,1}}
   +\|\nabla u\|_{L^\infty}\|(b,\nabla L_\ep b,\nabla v)\|_{\dot B^{\sigma}_{2,1}}\\
   \bigl(\|\nabla c\|_{\dot B^{\frac d2}_{2,1}}+\|c_t\!+\!\div((1\!+\!c)v)\|_{L^\infty}\bigr)\|\nabla L_\ep a\|_{\dot B^{\sigma}_{2,1}}   
+ \|\nabla c\|_{\dot B^{\frac d2}_{2,1}}\|L_\ep a\|_{\dot B^{\sigma}_{2,1}}\\
+ \|c\|_{L^\infty}\|\nabla K_\ep a\|_{\dot B^\sigma_{2,1}}
+ \|c\|_{\dot B^\sigma_{2,1}}\|\nabla K_\ep a\|_{L^\infty} +\|\nabla c\|_{\dot B^\sigma_{2,1}}\|\nabla L_\ep a\|_{L^\infty})\Bigr), \end{multline*}
   where  $\kappa_0$ only depends on $c,$ and
     the terms involving  $L^\infty$ norms of $L_\ep a$ or $u$
          are not needed  if $\sigma\leq d/2.$
\medbreak   
Then,  simplifying by $\cL_j$ and integrating on $[0,t]$ yields
\begin{multline*}
\cL_j(t)+ \kappa_0\min (1,2^{2j}\wh K_\ep(2^j))\!\int_0^t\!\cL_j\,d\tau \leq  \cL_j(0)+ C 2^{-j\sigma}
\!\int_0^t\! c_j\Bigl(\|v\|_{\dot B^{\frac d2+1}_{2,1}}\|(a,\nabla L_\ep a, u,\nabla u)\|_{\dot B^{\sigma}_{2,1}}\\
  + \|b,\nabla b\|_{\dot B^{\frac d2}_{2,1}}\|\div u\|_{\dot B^{\sigma}_{2,1}}
  \!+\|\nabla b\|_{\dot B^{\frac d2}_{2,1}}\|u\|_{\dot B^{\sigma}_{2,1}}
  +\|v\|_{\dot B^{\frac d2\!+\!2}_{2,1}}\|a\|_{\dot B^\sigma_{2,1}}
   \!+\|\nabla u\|_{L^\infty}\|(b,\nabla L_\ep b,\nabla v)\|_{\dot B^{\sigma}_{2,1}}
 \Bigr)d\tau\\
   +C2^{-j\sigma}\int_0^tc_j\Bigl(\bigl(\|\nabla c\|_{\dot B^{\frac d2}_{2,1}}+\|c_t\!+\!\div((1\!+\!c)v)\|_{L^\infty}\bigr)\|\nabla L_\ep a\|_{\dot B^{\sigma}_{2,1}} + \|\nabla c\|_{\dot B^{\frac d2}_{2,1}}\|L_\ep a\|_{\dot B^{\sigma}_{2,1}}
\Bigr)d\tau\\  
+C2^{-j\sigma}\int_0^tc_j\Bigl(\|c\|_{L^\infty}\|\nabla K_\ep a\|_{\dot B^\sigma_{2,1}}
+ \|c\|_{\dot B^\sigma_{2,1}}\|\nabla K_\ep a\|_{L^\infty} +\|\nabla c\|_{\dot B^\sigma_{2,1}}\|\nabla L_\ep a\|_{L^\infty}
\Bigr)d\tau. \end{multline*}
 Multiplying by $2^{j\sigma},$ summing up on all $j\in\Z$ and using again \eqref{eq:approx}, 
we conclude that
\begin{multline}\label{eq:almostfinal}
\|(a,u,\nabla u)(t)\|_{\dot B^{\sigma}_{2,1}}\!+\! 
\|\nabla L_\ep a(t)\|^h_{\dot B^{\sigma}_{2,1}}\!+\! 
\int_0^t\Bigl(\|(\nabla^2 K_\ep u, \nabla^3 K_\ep u)\|_{\dot B^{\sigma}_{2,1}}^\ell\!+\!
\|(u,\nabla u)\|^h_{\dot B^{\sigma}_{2,1}}\Bigr)d\tau \\+
\int_0^t\bigl(\|\nabla^2K_\ep a\|_{\dot B^{\sigma}_{2,1}}^\ell+\|\nabla L_\ep a\|_{\dot B^{\sigma}_{2,1}}^h\bigr)d\tau 
\lesssim \|(a_0,u_0,\nabla u_0)\|_{\dot B^{\sigma}_{2,1}}+ 
\|\nabla L_\ep a_0\|^h_{\dot B^{\sigma}_{2,1}}\\
+\int_0^t\Bigl(\|v\|_{\dot B^{\frac d2+1}_{2,1}}\|(a,\nabla L_\ep a, u,\nabla u)\|_{\dot B^{\sigma}_{2,1}}
  + \|b,\nabla b\|_{\dot B^{\frac d2}_{2,1}}\|\div u\|_{\dot B^{\sigma}_{2,1}}\\
  +\|\nabla b\|_{\dot B^{\frac d2}_{2,1}}\|u\|_{\dot B^{\sigma}_{2,1}}
  +\|\nabla v\|_{\dot B^{\frac d2+1}_{2,1}}\|a\|_{\dot B^\sigma_{2,1}}
   +\|\nabla u\|_{L^\infty}\|(b,\nabla L_\ep b,\nabla v)\|_{\dot B^{\sigma}_{2,1}}
 \Bigr)d\tau\\  
  +\int_0^t\Bigl(\bigl(\|\nabla c\|_{\dot B^{\frac d2}_{2,1}}+\|c_t\!+\!\div((1\!+\!c)v)\|_{L^\infty}\bigr)\|\nabla L_\ep a\|_{\dot B^{\sigma}_{2,1}} + \|\nabla c\|_{\dot B^{\frac d2}_{2,1}}\|L_\ep a\|_{\dot B^{\sigma}_{2,1}}
\Bigr)d\tau\\  
+\int_0^t\Bigl(\|c\|_{L^\infty}\|\nabla K_\ep a\|_{\dot B^\sigma_{2,1}}
+ \|c\|_{\dot B^\sigma_{2,1}}\|\nabla K_\ep a\|_{L^\infty} +\|\nabla c\|_{\dot B^\sigma_{2,1}}\|\nabla L_\ep a\|_{L^\infty}\Bigr)d\tau.
\end{multline}
where the terms with $L^\infty$ norms of $\nabla L_\ep a$ or $u$ are not needed if
$\sigma\leq d/2.$
\medbreak
Let us finally exhibit the 
$L^1$-in-time control for $\|\nabla u\|^\ell_{\dot B^\sigma_{2,1}}$ 
(note that it is not a consequence of the control of $\|u\|^\ell_{\dot B^\sigma_{2,1}}$
since `low' frequencies need not to be low !). 
We write that
$$\partial_k(\partial_ku)+\partial_ku+v\cdot\nabla\partial_ku=-\partial_kv\cdot \nabla u
-\partial_k((1\!+\!c)\nabla K_\ep a).$$
So, localizing this equation by means of $\ddj$ then repeating essentially the same arguments as
before (here we only need the most basic commutator estimate \eqref{com:I}), we arrive at
\begin{multline}\label{eq:nablau}
\|\nabla u(t)\|_{\dot B^\sigma_{2,1}}^\ell + \int_0^t\|\nabla u\|_{\dot B^\sigma_{2,1}}^\ell\,d\tau
\leq \|\nabla u_0\|_{\dot B^\sigma_{2,1}}^\ell + \int_0^t\|\nabla((1\!+\!c)\nabla K_\ep a)\|_{\dot B^\sigma_{2,1}}^\ell\,d\tau\\+ C\int_0^t\|\nabla v\|_{\dot B^{\frac d2}_{2,1}}\|\nabla u\|_{\dot B^{\sigma}_{2,1}}\,d\tau
+ C\int_0^t\|\nabla u\|_{L^\infty}\|\nabla v\|_{\dot B^{\sigma}_{2,1}}\,d\tau,
\end{multline}
where, as usual,  the last term is not needed if $\sigma\leq d/2.$
\smallbreak
Note that $\|\nabla^2 K_\ep a\|_{\dot B^\sigma_{2,1}}^\ell$ can be
 controlled from \eqref{eq:almostfinal}.
To handle the term $\nabla(c\nabla K_\ep a)$, we use the decomposition
(recall notation \eqref{eq:fucknotbis}):
$$\partial_k(c\,\partial_m K_\ep a)=\partial_k c\,\partial_m K_\ep a+
c\partial_k\partial_m K_\ep a^\ell+
[c,\partial_k L_\ep]\partial_m L_\ep a^h
+\partial_k L_\ep(c\,\partial_m L_\ep a^h).$$
Taking advantage  of \eqref{eq:prod1}, \eqref{eq:prod2} and \eqref{com:II}, and using the low frequency cut-off in the last term,  we get (with the usual convention if $\sigma\leq d/2$):
$$\begin{aligned}
\|\partial_k c\,\partial_m K_\ep a\|_{\dot B^\sigma_{2,1}}&\lesssim 
\|\partial_k c\|_{\dot B^{\frac d2}_{2,1}}\|\partial_m K_\ep a\|_{\dot B^\sigma_{2,1}}+
\|\partial_k c\|_{\dot B^\sigma_{2,1}}\|\partial_m K_\ep a\|_{L^\infty},\\
\|c\partial_k\partial_m K_\ep a^\ell\|_{\dot B^\sigma_{2,1}}&\lesssim
\|c\|_{\dot B^{\frac d2}_{2,1}}\|\partial_k\partial_m K_\ep a^\ell\|_{\dot B^\sigma_{2,1}}+
\|c\|_{\dot B^\sigma_{2,1}}\|\partial_k\partial_m K_\ep a^\ell\|_{L^\infty}, \\
\|[c,\partial_k L_\ep]\partial_m L_\ep a^h\|_{\dot B^\sigma_{2,1}}&\lesssim \|\nabla c\|_{\dot B^{\frac d2}_{2,1}}\|\partial_m L_\ep a^h\|_{\dot B^\sigma_{2,1}}
+\|\nabla c\|_{\dot B^{\sigma}_{2,1}}\|\partial_m L_\ep a^h\|_{L^\infty},\\
\|\partial_k L_\ep(c\,\partial_m L_\ep a^h)\|_{\dot B^\sigma_{2,1}}^\ell&\lesssim 
\nu_0\bigl(\|c\|_{\dot B^{\frac d2}_{2,1}}\|\partial_m L_\ep a^h\|_{\dot B^\sigma_{2,1}}+\|c\|_{\dot B^\sigma_{2,1}}
\|\partial_m L_\ep a^h\|_{L^\infty}\bigr)\cdotp
\end{aligned}$$
Hence we have 
\begin{multline*}
\|\partial_k(c\,\partial_m K_\ep a)\|^\ell_{\dot B^\sigma_{2,1}}
\lesssim \|\nabla c\|_{\dot B^{\frac d2}_{2,1}}\|\nabla L_\ep a\|_{\dot B^\sigma_{2,1}}
+ \|\nabla c\|_{\dot B^{\sigma}_{2,1}}\|\nabla L_\ep a\|_{L^\infty}\\
+ \|c\|_{\dot B^{\sigma}_{2,1}}\bigl(\|\nabla^2 K_\ep a^\ell\|_{L^\infty}+
\|\nabla L_\ep a^h\|_{L^\infty}\bigr)
+\|c\|_{\dot B^{\frac d2}_{2,1}}\bigl(\|\nabla^2 K_\ep a^\ell\|_{\dot B^\sigma_{2,1}}+\|\nabla L_\ep a^h\|_{\dot B^\sigma_{2,1}}\bigr)\cdotp
\end{multline*}
Hence, putting  \eqref{eq:LF9}, \eqref{eq:almostfinal} and \eqref{eq:nablau} together gives  \eqref{eq:linear}.
\medbreak
Let us finally consider the case  where, in addition 
$u_0$ is in $\dot B^{\sigma-1}_{2,1}.$
The starting point is \eqref{eq:LF10}. Since the term with 
$\nabla K_\ep a$ in the right-hand side is controlled by \eqref{eq:almostfinal} and because 
$$\|w-u\|_{\dot B^{\sigma-1}_{2,1}}^\ell 
\leq C\|\nabla K_\ep a\|^{\ell}_{\dot B^{\sigma-1}_{2,1}}\leq
C\|a\|^{\ell}_{\dot B^{\sigma-1}_{2,1}}, $$
we have the low frequency part of  \eqref{eq:linearb}. 
The high frequency part just stems from the fact that one can bound $\|u\|^h_{\dot B^{\sigma-1}_{2,1}}$
and $\|\nabla K_\ep a\|^h_{\dot B^{\sigma-1}_{2,1}}$ by, say, 
$\|u\|^h_{\dot B^{\sigma}_{2,1}}$ and  $\|\nabla a\|^h_{\dot B^{\sigma}_{2,1}},$
and thus by means of \eqref{eq:linear}. In the end, we get \eqref{eq:linearb}.
\end{proof}


\section{Proving  well-posedness and convergence to Euler}\label{s:GWP}

    This section is devoted to proving Theorems \ref{thm:GWP} and \ref{thm:conv}.
    In the first subsection, we prove the existence part of Theorem \ref{thm:GWP} then, 
    in second subsection, the uniqueness part. 
    The end of the section is devoted to establishing the
    convergence of the solutions to \eqref{Kepsilon} to those of \eqref{Euler} for $\ep$ tending to $0.$

\subsection{Existence}

Before proving the existence part  of  Theorem \ref{thm:GWP}, 
let us quickly explain why the results of the previous section 
allow to close the estimates  for all time and uniformly with respect to $\ep$
in the desired functional space for any initial data satisfying \eqref{eq:smallness2}.

To this end, we consider a smooth solution $(\rho,u)$ of \eqref{Kepsilon} on $[0,T)\times\R^d$
such that $a:=\rho-1$ satisfies $|a|\leq 1/4.$
Then, applying Inequality \eqref{eq:linear}  to the system satisfied by $(a,u)$
(that is,  to \eqref{eq:eulerconv} with $b=a,$ $c=0$ and $v=u$)
with $\sigma=d/2+1$ we get some absolute constant $C$ such that for all $t\in[0,T),$ 
$$X^{\frac d2+1}_{a,u}(t)+\int_0^tH^{\frac d2+1}_{a,u}\,d\tau \leq C\biggl(X_{a,u}^{\frac d2+1}(0)+
\int_0^t \|\nabla u\|_{\dot B^{\frac d2}_{2,1}\cap
\dot B^{\frac d2+1}_{2,1}}X^{\frac d2+1}_{a,u}\,d\tau\biggr)\cdotp$$
Note indeed that the 2nd and 3rd lines of  \eqref{eq:linear} 
then reduce to just 
 $\|\nabla u\|_{\dot B^{\frac d2}_{2,1}\cap \dot B^{\frac d2+1}_{2,1}}X^{\frac d2+1}_{a,u}.$ 
 Now, it is obvious that 
 $$\|\nabla u\|_{\dot B^{\frac d2}_{2,1}\cap \dot B^{\frac d2+1}_{2,1}}\leq H^{\frac d2+1}_{a,u}.$$
Hence, we conclude by bootstrap that  the smallness condition
$$2C^2X_{a,u}^{\frac d2+1}(0)<1 $$ implies that 
\begin{equation}\label{eq:globest}
X^{\frac d2+1}_{a,u}(t)+\frac12\int_0^tH^{\frac d2+1}_{a,u}\,d\tau \leq C X_{a,u}^{\frac d2+1}(0)
\quad\hbox{for all }\ t\in[0,T).
\end{equation}
Let us next move to the rigorous proof of existence of a solution for \eqref{Kepsilon} under the assumptions
of Theorem \ref{thm:GWP}. 
For technical reasons, we will have assume that, in addition to \eqref{eq:Kep}, 
the kernel $K_\ep$ is such that  $\nabla K_\ep : L^2\to H^{1}.$
This is clearly achieved if  $\nabla^2 K_\ep$ is in $L^1_{loc}(\R^d)$
(since we already have   $y^2\nabla K_\ep\in L^1(\R^d)$), and we note that there exists $M_\ep\geq0$
such that for all $s\in\R,$
\begin{equation}\label{eq:Mep}
\|\nabla K_\ep\|_{\cL(H^s;H^{s+1})}\leq M_\ep.\end{equation}

\subsubsection*{Step 1. Solving Burgers equation with friction and smooth data}

Here we consider 
\begin{equation}\label{eq:burgers}
u_t+u\cdot\nabla u+u=f \end{equation}
supplemented with initial velocity $u_0\in H^{s+1}$ and source term
$f\in \cC(\R_+;H^{s+1})$ with $s>d/2.$

The classical theory of symmetric hyperbolic systems (see e.g. \cite[Chap. 4]{BCD}
guarantees that \eqref{eq:burgers} admits a unique maximal solution 
$$u\in \cC([0,T^*);H^{s+1})\cap \cC^1([0,T^*);H^s).$$
Furthermore, by combining an energy method and classical commutator estimates in Sobolev spaces,  we have
\begin{equation}\label{eq:uu}\|u(t)\|_{H^{s+1}}\leq 
\|u_0\|_{H^{s+1}} +\int_0^t \|f\|_{H^{s+1}}+C\int_0^t\|\nabla u\|_{L^\infty}\|u\|_{H^{s+1}}\,d\tau\quad\hbox{for all }\ t\in[0,T^*),\end{equation}
whence, remembering the Sobolev embedding $H^s\hookrightarrow L^\infty$ (for $s>d/2$),
$$\sup_{\tau\in[0,t]}\|u(\tau)\|_{H^{s+1}}\leq 
\|u_0\|_{H^{s+1}} +\int_0^t \|f\|_{H^{s+1}}+
Ct\sup_{\tau\in[0,t]}\|u(\tau)\|_{H^{s+1}}^2\quad\hbox{for all }\ t\in[0,T^*).$$
This guarantees that there exists some constant $c$ depending only on $s$ and $d,$ such that
$$
    T^*\geq \sup \biggl\{ t\geq0,\: 
    t\Bigl(\|u_0\|_{H^{s+1}} +\int_0^t \|f\|_{H^{s+1}}\,d\tau\Bigr)\leq c\biggr\}\cdotp$$

\subsubsection*{Step 2. Local existence for \eqref{Kepsilon} supplemented
with smooth data}

Fix some $R_0>0$ and  data $(a_0,u_0)\in H^s\times H^{s+1}$ with $s>d/2,$ such that
$$   \|a_0\|_{H^s}+\|u_0\|_{H^{s+1}}\leq R_0.$$
Our goal it to prove that there exists some $T>0$ such that 
\eqref{Kepsilon} has a solution 
\begin{equation}\label{eq:au}
(a,u)\in\cC([0,T];H^s\times H^{s+1})\cap \cC^1([0,T];H^{s-1}\times H^s).\end{equation}
To do so, we consider the map $\Psi: \wt a\longmapsto a$  where $a$ is the solution to the transport equation 
$$a_t +\div ((1+a)u)=0$$  and the transport field $u$  is  the solution in 
$\cC([0,T);H^{s+1})\cap \cC^1([0,T);H^s)$ to the damped Burgers equation
$$u_t+u+u\cdot\nabla u=-\nabla K_\ep\wt a.$$
Owing to \eqref{eq:Mep}, the existence of $u$ with the required regularity on some maximal time 
interval $[0,T^*)$ is guaranteed by the previous step.
Then, the existence of $a\in\cC([0,T^*);H^s)\cap\cC([0,T^*);H^{s-1})$ follows from the standard theory  of transport equations in Sobolev spaces.

We claim that one can find some $T\in(0,T^*)$ such that $\Psi$ maps  the closed ball 
$\bar B(0,R)$ of $\cC([0,T);H^s)$ to itself, with $R=2R_0+1.$ Indeed, 
combining an energy method and Gronwall lemma, 
it is easy to show that 
$$\|a(t)\|_{H^s}\leq e^{C\int_0^t\|u\|_{H^{s+1}}\,d\tau}\|a_0\|_{H^s}+
e^{C\int_0^t\|u\|_{H^{s+1}}\,d\tau}-1,\qquad t\in[0,T^*)$$
and, owing to \eqref{eq:uu} and \eqref{eq:Mep},
$$\|u(t)\|_{H^{s+1}}\leq e^{C\int_0^t\|u\|_{H^{s+1}}\,d\tau}
\biggl(\|u_0\|_{H^{s+1}} + M_\ep\int_0^t \|\wt a\|_{H^{s}}\,d\tau\biggl),\qquad t\in[0,T^*).$$
If  $T$ is taken small enough then we have
$$C\int_0^T\|u\|_{H^{s+1}}\,d\tau\leq \log 2,$$
and thus, if $\|\wt a\|_{L^\infty(0,T; H^s}\leq R,$ 
$$\sup_{t\in[0,T]}\|a(t)\|_{H^s}\leq 2R_0+1=R\andf 
\sup_{t\in[0,T]}\|u(t)\|_{H^{s+1}}\leq 2R_0 +2RM_\ep T.$$
Hence, to ensure  our claim, it suffices to choose $T$ such that
$$ 2RM_\ep T\leq 1\andf CTR \leq \log2.$$
Next, since $a_t=-\div((1+a)u),$  we readily have $a_t\in \cC([0,T];H^{s-1})$ and
$$\sup_{t\in[0,T]}\|a_t(t)\|_{H^s}\leq C(1+ \sup_{t\in[0,T]}\|a(t)\|_{H^s})
\sup_{t\in[0,T]}\|u(t)\|_{H^s}\leq CR(1+R).$$
Hence $(\Psi(\wt a))_t$ remains in a bounded set of $\cC([0,T];H^{s-1}).$
Remembering that the embedding of $H^{s}(\R^d)$ in $H^{s-1}(\R^d)$ is locally compact, 
Schauder theorem guarantees that $\Psi$ admits a fixed point $a$ in $L^\infty(0,T;H^s).$
Back to the equation of $u,$ we deduce that $u$ is in $\cC([0,T];H^{s+1})$
then, using once more the equation of $a,$ that $a$ is in $\cC([0,T];H^s).$
Finally, computing the time derivative of $a$ and $u$ from the equation, we conclude to 
\eqref{eq:au}.

\subsubsection*{Step 3. A continuation criterion}  

Let $T^*$ be the lifespan of the solution $(a,u)$ constructed in the previous step. 
On the one hand, applying \eqref{eq:uu} with $f=-\nabla K_\ep a,$ we see that
\begin{equation}\label{eq:uuu}\|u(t)\|_{H^{s+1}} \leq \|u_0\|_{H^{s+1}} +M_\ep\int_0^t \|a\|_{H^{s}}
+C\int_0^t\|\nabla u\|_{L^\infty}\|u\|_{H^{s+1}}\,d\tau\quad\hbox{for all }\ t\in[0,T^*).\end{equation}
On the other hand, using the standard estimates in Sobolev spaces for the transport equation
and the product law
$$ \|a\.\div u\|_{H^s}\lesssim \|a\|_{L^\infty}\|\div u\|_{H^s} + 
\|a\|_{H^s}\|\div u\|_{L^\infty},$$
we get for all $t\in[0,T^*),$
\begin{equation}\label{eq:aaa}\|a(t)\|_{H^{s}} \leq  \|a_0\|_{H^{s}} 
+C\int_0^t\|u\|_{H^{s+1}}(1+\|a\|_{L^\infty})\,d\tau+
C\int_0^t\|\nabla u\|_{L^\infty}\|a\|_{H^s}\,d\tau.\end{equation}
Putting \eqref{eq:uuu} and \eqref{eq:aaa} together, then using Gronwall lemma yields
for all $t\in[0,T^*),$
$$
\|a(t)\|_{H^{s}} + \|u(t)\|_{H^{s+1}} \leq  \bigl(\|a_0\|_{H^{s}}+\|u_0\|_{H^{s+1}}\bigr)
e^{(C+M_\ep)\int_0^t\bigl(1+\|a\|_{L^\infty}+\|\nabla u\|_{L^\infty}\bigr)d\tau},$$
whence the following blow-up criterion: 
\begin{equation}\label{eq:blow-up}
T^*<\infty\Longrightarrow \int_0^{T^*}\bigl(\|a\|_{L^\infty}+\|\nabla u\|_{L^\infty}\bigr)dt=\infty.\end{equation}

\subsubsection*{Step 4. Global existence for System \eqref{Kepsilon} with 
data in Sobolev spaces}

Fix a pair $(a_0,u_0)$ satisfying the smallness assumption of Theorem 
\ref{thm:GWP}, and consider a sequence    $(a_0^{(n)},  u_0^{(n)})_{n\in\N}$
of smooth initial data   such that
\begin{equation}\label{ini-data-app}
\begin{aligned}
  (a_0^{(n)},  u_0^{(n)}) &\to (a_0,u_0)
    \mbox{ \  in \ } \dot B^{d/2}_{2,1}\cap\dot B^{d/2+1}_{2,1}(\R^d)\\
    \andf (\nabla^2 K_\ep a_0^{(n)}, \nabla^2 u_0^{(n)} &\to 
 (\nabla^2 K_\ep a_0, \nabla^2 u_0)    \mbox{ \  in \ } \dot B^{d/2}_{2,1}(\R^d).
 \end{aligned}
\end{equation}
One can for instance set $a_0^{(n)}:= (\dot S_n-\dot S_{-n}) a_0$
and $u_0^{(n)}:= (\dot S_n-\dot S_{-n}) u_0$ so that
\eqref{eq:smalldata} is satisfied by   $(a_0^{(n)},  u_0^{(n)})$ for all $n\in\N.$
\medbreak
The previous steps guarantee that \eqref{Kepsilon} supplemented with 
initial data $(a_0^{(n)},  u_0^{(n)})$ has a unique maximal solution 
$(a^{(n)},u^{(n)})$ in, say, 
$$\cC([0,T^{(n)});H^{\frac d2+3}\times H^{\frac d2+4})\cap 
\cC^1([0,T^{(n)});H^{\frac d2+2}\times H^{\frac d2+3})).$$
We  thus have enough regularity to apply the estimates of Theorem 
\ref{thm:linear} with $\sigma=d/2+1,$ $a=b=a^{(n)},$
$u=v=u^{(n)}$ and $c=0.$ 
Following the proof of \eqref{eq:globest}, 
we conclude that if $\alpha_0$ in \eqref{eq:smalldata} is small enough, then
we have 
$$\sup_{t\in[0,T^{(n)})} 
X^{\frac d2+1}_{a^{(n)},u^{(n)}}(t)+\frac12\int_0^{T^{(n)}} H^{\frac d2+1}_{a^{(n)},u^{(n)}}(\tau)\,d\tau\leq C\alpha_0\quad\hbox{for all }
t\in[0,T^{(n)}).$$
This implies that both $a^{(n)}(t)$ and $\nabla u^{(n)}(t)$ remain in a bounded set of $L^\infty.$
Hence the blow-up criterion \eqref{eq:blow-up} ensures that $T^{(n)}=\infty.$

Note also that  the second estimate of Theorem \ref{thm:linear} provides
a uniform control on $u^{(n)}$ and $w^{(n)}$ in 
$L^\infty(\R_+;\dot B^{\frac d2}_{2,1}).$

\subsubsection*{Step 5. Passing to the limit} 

In the previous step, we constructed a sequence of smooth global solutions of
\eqref{Kepsilon} pertaining to smooth data, that is bounded in the space $E^{\frac d2+1}_{K_\ep},$
and thus in 
$$F^{\frac d2+1}_{K_\ep}:=\bigl\{(b,v)\in L^\infty(\R_+;\dot B^{\frac d2}_{2,1}\cap \dot B^{\frac d2+1}_{2,1})\times  
L^\infty(\R_+;\dot B^{\frac d2}_{2,1}\cap \dot B^{\frac d2+2}_{2,1}),\. 
\nabla^2 K_\ep \in L^\infty(\R_+;\dot B^{\frac d2}_{2,1})\bigr\}\cdotp$$
This latter space being the dual of some separable Banach space, one may deduce that there
exists $(a,u)$ in $F^{\frac d2+1}_{K_\ep}$ such that, 
up to an omitted extraction, we have 
$$(a^{(n)},u^{(n)})\rightharpoonup (a,u)\ \ \hbox{weak * in }\ F^{\frac d2+1}_{K_\ep}.$$
Furthermore, as 
$$u^{(n)}_t=-u^{(n)}-u^{(n)}\cdot\nabla u^{(n)}-\nabla K_\ep a^{(n)},$$
the sequence $(u^{(n)})_{n\in\N}$ is bounded in $L^\infty(\R_+;\dot B^{\frac d2}_{2,1})$ 
(use product laws \eqref{eq:prod1} and \eqref{eq:prod2}) and, similarly, 
$(a^{(n)})_{n\in\N}$ is bounded in $L^\infty(\R_+;\dot B^{\frac d2}_{2,1}).$
Using the fact that both sequences are, in particular, bounded in 
$L^\infty(\R_+;\dot B^{\frac d2}_{2,1}\cap \dot B^{\frac d2+1}_{2,1})$ and that the embedding 
from $\dot B^{\frac d2}_{2,1}\cap \dot B^{\frac d2+1}_{2,1}$ to $\dot B^{\frac d2}_{2,1}$ is locally 
compact, one discovers that for all $\theta\in \cC^\infty_c(\R^d),$ we have (again, up to extraction),
$$(\theta a^{(n)},\theta u^{(n)})\to (\theta a,\theta u) \quad\hbox{strongly in }\ \cC_b(\R_+;\dot B^{\frac d2}_{2,1}).$$
This allows to pass to the limit in \eqref{Kepsilon} in the sense of distributions, fingers in the nose. 

Furthermore, for all fixed $t\in\R_+,$ the sequence $(\nabla u^{(n)}(t))_{n\in\N}$ is bounded in 
$\dot B^{\frac d2}_{2,1}\cap \dot B^{\frac d2+1}_{2,1}$ hence must converge to some function 
$z(t)$ weakly * in $\dot B^{\frac d2}_{2,1}\cap \dot B^{\frac d2+2}_{2,1}.$
Combining with the above property of strong convergence, we deduce that we must have $z(t)=\nabla u(t).$
Then, using the properties of lower semi-continuity of the weak limit and Fatou lemma, one can  write
$$\begin{aligned}\int_{\R_+} \|\nabla u\|_{\dot B^{\frac d2}_{2,1}\cap\dot B^{\frac d2+1}_{2,1}}\,dt
&\leq \int_{\R_+} \liminf \|\nabla u^{(n)}\|_{\dot B^{\frac d2}_{2,1}\cap\dot B^{\frac d2+1}_{2,1}}\,dt\\
&\leq  \liminf\int_{\R_+} \|\nabla u^{(n)}\|_{\dot B^{\frac d2}_{2,1}\cap\dot B^{\frac d2+1}_{2,1}}\,dt
\leq C\alpha_0.\end{aligned}$$
Similar arguments may be employed to show that all the other $L^1$-in-time properties of the 
space $E^{\frac d2+1}_{K_\ep}$ are satisfied by $(a,u).$
Finally, the time continuity for $u$ with values in $\dot B^{\frac d2}_{2,1}\cap\dot B^{\frac d2+2}_{2,1}$ stems from the properties of the transport equation and of the fact that
(remember \eqref{eq:Mep}):
$$u_t+u+u\cdot\nabla u=-\nabla K_\ep a\in L^\infty(\R_+;\dot B^{\frac d2}_{2,1}\cap\dot B^{\frac d2+2}_{2,1}).$$
Similarly, 
$$ a_t+ u\cdot\nabla a = -(1+a)\div u \in L^1(\R_+;\dot B^{\frac d2}_{2,1}\cap\dot B^{\frac d2+1}_{2,1}),$$
and thus $a\in \cC_b(\R_+;\dot B^{\frac d2}_{2,1}\cap\dot B^{\frac d2+1}_{2,1}).$
This completes the proof of  existence in Theorem \ref{thm:GWP}.

\subsection{Uniqueness}

This part is devoted to proving the uniqueness part of Theorem \ref{thm:GWP}.
We consider two solutions $(\rho_1=1+a_1,u_1)$ and $(\rho_2=1+a_2,u_2)$ of \eqref{Kepsilon} 
with $(a_i,u_i)$ for $i=1,2$ in the space $E^{\frac d2+1}_{K_\ep}.$ 
Then, we observe that  $\da:=\rho_2-\rho_1$ and $\du:=u_2-u_1$ satisfy
$$\left\{\begin{aligned}
&\partial_t\da +u_1\cdot\nabla \da +\div \du+a_1\div \du=f:=-\du\cdot\nabla a_2-\da\div u_2,\\[1ex]
&\partial_t\du +u_1\cdot\nabla \du +\du +\nabla K_\ep \da=g:= -\du\cdot\nabla u_2.\end{aligned}\right.$$
Hence  $(\da,\du)$ satisfies 
a linear system of type \eqref{eq:eulerconv} with $v=u_1$ and $b=a_1$
and source terms $(f,g).$
Now, uniqueness on a finite interval $[0,T]$ will stem from   Inequality \eqref{eq:linear}
provided we have proved beforehand that  $(\da,\du)$ has
the regularity required in Theorem \ref{thm:linear} in the case $\sigma=d/2.$
After careful inspection of what is already known on $(a_1,u_1)$ and $(a_2,u_2),$
we see that it suffices  to check  that  
$$\da\in L^\infty(0,T;\dot B^{\frac d2-1}_{2,1}) \andf \du\in L^\infty(0,T;\dot B^{\frac d2}_{2,1}).$$
These two properties may be justified from the density and velocity equations
and   product laws in Besov spaces (that is, \eqref{eq:prod1}) which guarantee
that
$$\partial_t a_i \in L^1(0,T;\dot B^{\frac d2-1}_{2,1})
\andf \partial_t u_i+u_i \in L^1(0,T;\dot B^{\frac d2}_{2,1}),\qquad i=1,2.$$
 Hence, using  the short notation $\dX:=X^{\frac d2}_{\da,\du}$ and $\dH:=H^{\frac d2}_{\da,\du},$
 we have:
\begin{multline}\label{eq:dX}
\dX(t)+\int_0^t\dH\,d\tau\lesssim \dX(0)+\int_0^t\|a_1,\nabla a_1,\nabla u_1,\nabla^2 u_1\|_{\dot B^{\frac d2}_{2,1}}
\dX\,d\tau\\+\int_0^t\|f,\nabla f,\nabla^2 L_\ep f\|_{\dot B^{\frac d2-1}_{2,1}}\,d\tau
+\int_0^t\|g,\nabla g\|_{\dot B^{\frac d2}_{2,1}}\,d\tau.
    \end{multline}
Thanks to \eqref{eq:prod1}, we readily have
$$\begin{aligned}
\|f\|_{\dot B^{\frac d2-1}_{2,1}}&\lesssim \|\du\|_{B^{\frac d2}_{2,1}}
\|a_2\|_{\dot B^{\frac d2}_{2,1}}+\|\da\|_{B^{\frac d2-1}_{2,1}}
\|\div u_2\|_{\dot B^{\frac d2}_{2,1}},\\
\|f\|_{\dot B^{\frac d2}_{2,1}}&\lesssim
 \|\du\|_{B^{\frac d2}_{2,1}}
\|\nabla a_2\|_{\dot B^{\frac d2}_{2,1}}+
\|\da\|_{B^{\frac d2}_{2,1}}
\|\div u_2\|_{\dot B^{\frac d2}_{2,1}},\\
\|g\|_{\dot B^{\frac d2}_{2,1}}&\lesssim \|\du\|_{B^{\frac d2}_{2,1}}
\|\nabla u_2\|_{\dot B^{\frac d2}_{2,1}}, \\
\|\nabla g\|_{\dot B^{\frac d2}_{2,1}}&\lesssim \|\nabla \du\|_{B^{\frac d2}_{2,1}}
\|\nabla u_2\|_{\dot B^{\frac d2}_{2,1}}+\|\du\|_{B^{\frac d2}_{2,1}}
\|\nabla^2 u_2\|_{\dot B^{\frac d2}_{2,1}}.
\end{aligned}$$
Bounding $\nabla L_\ep f$ in $\dot B^{\frac d2}_{2,1}$ is a bit more tricky. 
To achieve it, we use the decompositions:
$$\begin{aligned}
\nabla L_\ep(\du\cdot\nabla a_2)&=[\nabla L_\ep,\du]\cdot\nabla a_2+\du\cdot\nabla^2L_\ep a_2,\\
\nabla L_\ep(\da\div u_2)&=[\nabla L_\ep,\div u_2]\da+\div u_2\,\nabla L_\ep\da.\\
\end{aligned}$$
Hence, taking advantage of \eqref{eq:prod1} and of \eqref{com:II}, we have
$$\begin{aligned}
\|\nabla L_\ep(\du\cdot\nabla a_2)\|_{\dot B^{\frac d2}_{2,1}}&\lesssim 
\|\nabla\du\|_{\dot B^{\frac d2}_{2,1}}\|\nabla a_2\|_{\dot B^{\frac d2}_{2,1}}+
\|\du\|_{\dot B^{\frac d2}_{2,1}}\|\nabla^2L_\ep a_2\|_{\dot B^{\frac d2}_{2,1}},\\
\|\nabla L_\ep(\da\div u_2)\|_{\dot B^{\frac d2}_{2,1}}&\lesssim 
\|\da\|_{\dot B^{\frac d2}_{2,1}}
\|\nabla\div u_2\|_{\dot B^{\frac d2}_{2,1}}+
\|\div u_2\|_{\dot B^{\frac d2}_{2,1}}\|\nabla L_\ep \da\|_{\dot B^{\frac d2}_{2,1}}.
\end{aligned}$$
Plugging all the above inequalities in \eqref{eq:dX} yields
$$\dX(t)+\int_0^t\dH\,d\tau\lesssim \dX(0)+\int_0^t\|a_1,\nabla a_1,\nabla u_1,\nabla^2 u_1,a_2,\nabla a_2,\nabla^2L_\ep a_2, \nabla u_2,\nabla^2u_2\|_{\dot B^{\frac d2}_{2,1}}
\dX\,d\tau.$$
Since the prefactor of $\dX$ in the right-hand side is indeed locally integrable 
on time, Gronwall lemma ensures $\dX\equiv0$ that is, uniqueness, if the initial 
data of the two solutions are the same ones, and, more generally,  
stability with respect to the data.  \qed


\subsection{Convergence to Euler}

Justifying it  is  an easy adaptation of the proof of uniqueness that has been presented
just above. 

Indeed, consider initial data $(\rho_0=1+a_0,u_0)$ such that the smallness condition \eqref{eq:smallness2}
is satisfied. 
Then, even if it means a slight change in $\alpha_0,$ Condition \eqref{eq:smalldata}
is satisfied for all small enough $\ep>0.$
Consequently, on the one hand, Theorem \ref{thm:GWP} provides us with a unique global solution
$(\rho_\ep=1+a_\ep,u_\ep)$ satisfying  the properties described therein.
On the other hand, by following faithfully the proof of estimates for 
 \eqref{eq:eulerconv} (formally replacing the convolution by $K_\ep$ with the identity operator
 everywhere) then adapting the proof of existence accordingly, one  
 gets a global solution $(\rho=1+a,u)$ for  Euler 
 system \eqref{Euler} such that\footnote{The global well-posedness result 
 of \cite{CBD2} only deasls with regularity $\dot B^{\frac d2}_{2,1}\cap \dot B^{\frac d2+1}_{2,1}.$} 
$(a,u)$ belongs to the space $E^{\frac d2+1}$
defined in \eqref{def:E}.
To prove the convergence of $(a_\ep,u_\ep)$ to $(a,u)$, let us look at the system
satisfied by $\da:=a-a_\ep$ and $\du:=u-u_\ep$:
$$\left\{\begin{aligned}
&\partial_t\da +u_\cdot\nabla \da +\div \du+a\div \du=f:=-\du\cdot\nabla a_\ep-\da\div u_\ep,\\
&\partial_t\du +u\cdot\nabla \du +\du +\nabla K_\ep \da=g:= -\du\cdot\nabla u_\ep
+\nabla(K_\ep-Id)a.\end{aligned}\right.$$
Compared to the proof of uniqueness, only the last term of $g$ is new.
All the other terms may be bounded as above after replacing $a_1$ and $u_1$
(resp. $a_2$ and $u_2$) by   $a$ and $u$ (resp. $a_\ep$ and $u_\ep$). 
However, as we strive for a global-in-time result of convergence, 
putting all the terms concerning $a$ or $a_\ep$ as prefactor of
$\dX$ is not suitable : we need to be a little more precise, so  we write that
\begin{multline*}
 \dX(t)+\int_0^t\dH\,d\tau\lesssim
 \dX(0)+\int_0^t\|\nabla u,\nabla^2 u,\nabla^2L_\ep a_\ep, \nabla u_\ep,\nabla^2u_\ep\|_{\dot B^{\frac d2}_{2,1}}
\dX\,d\tau\\+\int_0^t\|a,\nabla a,a_\ep,\nabla a_\ep\|_{\dot B^{\frac d2}_{2,1}}
\|\du,\nabla\du\|_{\dot B^{\frac d2}_{2,1}}\,d\tau
+\int_0^t \|\nabla(K_\ep-Id)a\|_{\dot B^{\frac d2}_{2,1}\cap \dot B^{\frac d2+1}_{2,1}}\,d\tau.   
\end{multline*}
Since $\|\du,\nabla\du\|_{\dot B^{\frac d2}_{2,1}}$ is a part of $\dH$ and 
$$\sup_{t\in\R_+}\|(a,\nabla a,a_\ep,\nabla a_\ep)(t)\|_{\dot B^{\frac d2}_{2,1}}\ \hbox{ is small},$$ 
the last but one term in the right-hand side may be absorbed by the left-hand side. 
Furthermore, the map $t\mapsto \|(\nabla u,\nabla^2 u,\nabla^2L_\ep a_\ep, \nabla u_\ep,\nabla^2u_\ep)(t)\|_{\dot B^{\frac d2}_{2,1}}$ is integrable on $\R_+$
and is also small. 
Hence, applying Gronwall lemma yields for all $t\in\R_+,$
$$\dX(t)+\int_0^t\dH\,d\tau\lesssim \dX(0)+\int_0^t \|\nabla(K_\ep-Id)a\|_{\dot B^{\frac d2}_{2,1}\cap \dot B^{\frac d2+1}_{2,1}}\,d\tau.$$
Finally, the properties of the solution $(a,u)$ ensure in particular that
$$\nabla a\in L^2(\R_+;\dot B^{\frac d2}_{2,1})\andf 
\nabla^2 a\in L^1(\R_+;\dot B^{\frac d2}_{2,1}).$$
Hence, by virtue of Lebesgue's dominated convergence theorem, we have
$$
\int_0^\infty \|\nabla(K_\ep-Id)a\|_{\dot B^{\frac d2+1}_{2,1}}\,d\tau\to 0\andf 
\int_0^t\|\nabla(K_\ep-Id)a\|_{\dot B^{\frac d2}_{2,1}}\,d\tau\to 0,\quad
\hbox{for all }\ t\in\R_+.
$$
From this, we conclude, among other, that
for $\ep \to 0$
$$(a_\ep-a)\to 0 \ \hbox{ in }\ L^\infty_{loc}(\R_+;\dot B^{\frac d2-1}_{2,1})
\andf (u_\ep-u)\to 0 \ \hbox{ in }\ L^\infty_{loc}(\R_+;\dot B^{\frac d2}_{2,1}).$$
Interpolating with the uniform bounds that are satisfied by $(a_\ep,u_\ep)$ and $(a,u),$
one can upgrade the convergence to e.g.
$$a_\ep\to a\ \hbox{ in }\ L^\infty_{loc}(\R_+;\dot B^{\frac d2+\alpha}_{2,1}),
\ \alpha\in[0,1)\andf 
u_\ep\to u\ \hbox{ in }\ L^\infty_{loc}(\R_+;\dot B^{\frac d2+\beta}_{2,1}),
\ \beta\in[0,2).$$
In order to have uniform-in-time convergence, one can  assume 
in addition that $a_0\in\dot B^{\frac d2-1}_{2,1}.$
Then, we have $\nabla a\in L^1(\R_+;\dot B^{\frac d2}_{2,1})$ and 
thus $\nabla(K_\ep-Id)a\to 0$ in $L^1(\R_+;\dot B^{\frac d2}_{2,1})$
as $\ep \to 0$.

To have a global result, another possibility  is to assume that
in addition to \eqref{eq:Kep},  we have $K_\ep=\ep^{-d}K(\ep^{-1})$
with $K$ such that
$\eta\mapsto |\eta|^{-1}(\wh K(\eta)-1)$ is bounded.
Indeed, one can write
$$\cF(\nabla(K_\ep-\Id))(\xi)=i\biggl(\frac{\wh K(\ep\xi)-1}{\ep|\xi|}\biggr)\:
\ep\xi|\xi|\wh a(\xi),$$
and thus $\nabla(K_\ep-\Id)={\mathcal O}(\ep)$ in $L^1(\R_+;\dot B^{\frac d2}_{2,1}).$
\medbreak
This completes the proof of  Theorem \ref{thm:conv}.


\section{On the high friction limit}\label{s:limit}

In the introductory part of the paper, we pointed out that, after
performing the diffusive rescaling \eqref{eq:diffusive}, the density 
formally tends to the solution of different avatars of the porous media 
equation. 
In this section, we aim at justifying rigorously this heuristics, getting
in the small data case, strong and global-in-time results of convergence.

As a preliminary step, let us present the results that can be deduced from 
Theorem \ref{thm:GWP}. Fix some data $(\rho_0=1+a_0,u_0)$ 
satisfying  the regularity requirements therein, and denote
by $(\wt\rho_0=1+\wt a_0,\wt u_0)$ the data corresponding to the 
 rescaling \eqref{eq:keyrescaling}. 
 If $(\wt a_0,\wt u_0)$ fulfills \eqref{eq:smalldata} \emph{with $\ep\f$ instead
 of $\ep$}, then Theorem \ref{thm:GWP} gives us a unique global solution $(1+\wt a,\wt u)$
satisfying \eqref{est:1} and \eqref{est:2} (with $\ep\f$). 
We have the following scaling properties for $\wt z(x)=z(\f^{-1}x):$
$$\|\wt z\|_{\dot B^{\frac d2+\sigma}_{2,1}} = \f^{-\sigma} \|z\|_{\dot B^{\frac d2+\sigma}_{2,1}} \andf L_{\f\ep}\wt z(x)= (L_\ep z)(\f^{-1}x).$$
Hence, reverting to the original variables, we deduce that provided
\begin{equation}\label{eq:smalldataf}\|(a_0,\f^{-1}\nabla a_0,\f^{-2}\nabla^2L_{\ep} a_0)\|_{\dot B^{\frac d2}_{2,1}}+\f^{-1}\|(u_0,\f^{-1}\nabla u_0)\|_{\dot B^{\frac d2+1}_{2,1}}\leq \alpha_0,\end{equation}
System \eqref{Kepsilon} has a unique global solution $(\rho=1+a,u)$ such that 
\begin{equation}\label{eq:Xf}
X(t)+\int_0^tH\,d\tau \leq C X(0)\end{equation}
with 
$$\displaylines{X(t):=\|(a,\f^{-1}\nabla a,\f^{-2}\nabla^2L_{\ep} a)(t)\|_{\dot B^{\frac d2}_{2,1}}+\f^{-1}\|(u,\f^{-1}\nabla u)(t)\|_{\dot B^{\frac d2+1}_{2,1}}\andf\cr
H(t):=\|(u,\f^{-1}\nabla  u)\|_{\dot B^{\frac d2+1}_{2,1}}+\f^{-1}\|\nabla^2 K_{\ep} a\|^\ell_{\dot B^{\frac d2}_{2,1}}+\f^{-2}\|\nabla^2 K_{\ep} a\|^\ell_{\dot B^{\frac d2+1}_{2,1}}
+\|(a,\f^{-1}\nabla L_{\ep}  a)\|_{\dot B^{\frac d2+1}_{2,1}}^h.}$$
Furthermore, if, in addition, $u_0$ belongs to $\dot B^{\frac d2}_{2,1},$ 
then  the damped mode $w:=u+\f^{-1}\nabla K_\ep a$ satisfies
\begin{equation}\label{eq:ud/2b}
\|u(t)\|_{\dot B^{\frac d2}_{2,1}} +\|w(t)\|_{\dot B^{\frac d2}_{2,1}} +
\f\int_0^t\|w\|_{\dot B^{\frac d2}_{2,1}}\,d\tau\leq C\bigl(\|u_0\|_{\dot B^{\frac d2}_{2,1}}^\ell+X(0)\bigr)\cdotp\end{equation}
Based on these uniform estimates, it will be rather easy to justify the high friction asymptotics 
pointed out in the introduction, after performing the diffusive rescaling  \eqref{eq:diffusive}.

\subsection{High relaxation limit for fixed $\ep$}

In this section, we justify the convergence of $(\check\rho,\check u)$ (obtained 
from $(\rho,u)$ and \eqref{eq:diffusive}) to $(r,-\nabla K_\ep r),$ with $r$ satisfying \eqref{eq:pmeps} supplemented with initial data $\rho_0.$
Our main result reads as follows:
\begin{thm}\label{thm:fric1} Fix some $\ep>0$ and data $(\rho_0=1+a_0,u_0)$ such that
$a_0$ and $u_0$ are in $\dot B^{\frac d2}_{2,1}\cap \dot B^{\frac d2+2}_{2,1}.$
There exists an absolute constant $\alpha_0$ such that, if 
\begin{equation}\label{eq:smalldata1}
\|a_0\|_{\dot B^{\frac d2}_{2,1}}\leq\alpha_0
\end{equation}
then, for all large enough $\f,$ System \eqref{Kepsilon} admits a unique global 
solution $(\rho_\f=1+a_\f,u_\f)$ satisfying \eqref{eq:Xf} and \eqref{eq:ud/2b}, 
and Equation \eqref{eq:pmeps} supplemented with initial data $\rho_0$ 
has a unique global solution $r=1+\wt r$ with 
$\wt r\in\cC_b(\R_+;\dot B^{\frac d2}_{2,1})$ and 
$\nabla^2K_\ep\wt r\in L^1(\R_+;\dot B^{\frac d2}_{2,1}).$
\smallbreak
Furthermore, if  $(\check \rho_\f,\check u_\f)$ is defined from $(\rho_\f,u_\f)$ by \eqref{eq:diffusive}, then we have
\begin{equation*} \|\check u_\f+\nabla K_\ep r\|_{L^1(\R_+;\dot B^{\frac d2}_{2,1})}+
    \|\check \rho_\f -r \|_{L^\infty(\R_+;\dot B^{\frac d2-1}_{2,1})}+\|\nabla^2 K_\ep( \check \rho_\f -r)\|_{L^1(\R_+;\dot B^{\frac d2-1}_{2,1})} \to 0 \mbox{ 
    \ as \ } \f \to \infty
\end{equation*}
with convergence rate $\f^{-1}.$
\end{thm}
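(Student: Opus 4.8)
The plan is to reduce everything to the diffusively rescaled picture, import the uniform bounds, and then compare $\check\rho_\f$ with the solution of the limit equation by a single (degenerate) parabolic energy estimate.

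\emph{Step 1: reduction and uniform bounds.} Under \eqref{eq:smalldata1} and the regularity assumptions on $(a_0,u_0)$, every term of \eqref{eq:smalldataf} other than $\|a_0\|_{\dot B^{\frac d2}_{2,1}}$ carries a strictly negative power of $\f$, so \eqref{eq:smalldataf} holds once $\f$ is large enough; a rescaled application of Theorem \ref{thm:GWP} then yields a unique global solution $(\rho_\f=1+a_\f,u_\f)$ of \eqref{Kepsilon} obeying \eqref{eq:Xf}--\eqref{eq:ud/2b}. Applying the change of variables \eqref{eq:diffusive} produces $(\check\rho_\f=1+\check a_\f,\check u_\f)$ with $\check\rho_\f(t,\cdot)=\rho_\f(\f t,\cdot)$, $\check u_\f(t,\cdot)=\f\,u_\f(\f t,\cdot)$, and $\check a_\f|_{t=0}=a_0$. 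Setting $\check w_\f:=\check u_\f+\nabla K_\ep\check\rho_\f$ (so that $\check u_\f=\check w_\f-\nabla K_\ep\check\rho_\f$) and eliminating $\check u_\f$ from the mass equation gives
\[\partial_t\check\rho_\f-\div(\check\rho_\f\nabla K_\ep\check\rho_\f)=-\div(\check\rho_\f\check w_\f).\]
Since $\check w_\f(t,\cdot)=\f\,w(\f t,\cdot)$ for the damped mode $w=u_\f+\f^{-1}\nabla K_\ep a_\f$ of \eqref{eq:ud/2b}, a change of variable in time gives $\|\check w_\f\|_{L^1(\R_+;\dot B^{\frac d2}_{2,1})}=\int_0^\infty\|w\|_{\dot B^{\frac d2}_{2,1}}\,d\tau\lesssim\f^{-1}$, while \eqref{eq:Xf} yields $\|\check a_\f\|_{L^\infty(\R_+;\dot B^{\frac d2}_{2,1})}\lesssim X(0)$, a quantity that can be assumed small.

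\emph{Step 2: the limit equation and the error system.} Writing $r=1+\wt r$, \eqref{eq:pmeps} becomes $\partial_t\wt r-\Delta K_\ep\wt r=\div(\wt r\nabla K_\ep\wt r)$, a degenerate parabolic equation. Running the energy argument of the first step of the proof of Theorem \ref{thm:linear} (localize by $\ddj$, pair with $\wt r_j$, use \eqref{eq:bernstein} and the product laws \eqref{eq:prod1}--\eqref{eq:prod2}) and noting that Bernstein's inequality together with \eqref{eq:Kep} gives $\|\nabla K_\ep f\|_{\dot B^{s}_{2,1}}\approx\|\Delta K_\ep f\|_{\dot B^{s-1}_{2,1}}$, one obtains
\[\|\wt r(t)\|_{\dot B^{\frac d2}_{2,1}}+c\int_0^t\|\Delta K_\ep\wt r\|_{\dot B^{\frac d2}_{2,1}}\,d\tau\lesssim\|a_0\|_{\dot B^{\frac d2}_{2,1}}+\int_0^t\|\wt r\|_{\dot B^{\frac d2}_{2,1}}\|\Delta K_\ep\wt r\|_{\dot B^{\frac d2}_{2,1}}\,d\tau,\]
so under \eqref{eq:smalldata1} the quadratic term is absorbed; existence of $\wt r\in\cC_b(\R_+;\dot B^{\frac d2}_{2,1})$ with $\nabla^2K_\ep\wt r\in L^1(\R_+;\dot B^{\frac d2}_{2,1})$ follows by a standard approximation scheme and uniqueness from the same estimate one derivative lower. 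In particular $\|\nabla K_\ep\wt r\|_{\dot B^{\frac d2+1}_{2,1}}\approx\|\Delta K_\ep\wt r\|_{\dot B^{\frac d2}_{2,1}}\in L^1(\R_+)$ with norm $\lesssim\|a_0\|_{\dot B^{\frac d2}_{2,1}}$. Now put $\delta\rho:=\check\rho_\f-r$ ($=\delta a$). Subtracting the two mass equations and using $\check\rho_\f\nabla K_\ep\check\rho_\f-r\nabla K_\ep r=\check\rho_\f\nabla K_\ep\delta a+\delta a\,\nabla K_\ep\wt r$, and since both solutions start from $\rho_0$, we get
\[\partial_t\delta a-\div\bigl((1+\check a_\f)\nabla K_\ep\delta a\bigr)=\div(\delta a\,\nabla K_\ep\wt r)-\div\bigl((1+\check a_\f)\check w_\f\bigr),\qquad\delta a|_{t=0}=0.\]

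\emph{Step 3: error estimate and conclusion.} This is a linear degenerate parabolic equation for $\delta a$ with the \emph{small} coefficient $\check a_\f$ in front of the dissipation. I would run on it the energy method of the first step of the proof of Theorem \ref{thm:linear} at the regularity level of $a$ corresponding to $\sigma=\frac d2$ there, i.e. $\delta a\in\dot B^{\frac d2-1}_{2,1}$ (admissible since $d\geq2$): after localization and pairing with $\delta a_j$, the term $-\int\div(\nabla K_\ep\delta a_j)\,\delta a_j=\|\nabla L_\ep\delta a_j\|_{L^2}^2$ supplies the dissipation; the coefficient contribution splits, via $K_\ep=L_\ep\ast L_\ep$, into a piece $\lesssim\|\check a_\f\|_{L^\infty}\|\nabla L_\ep\delta a_j\|_{L^2}^2$ (absorbed by smallness) plus commutators handled as in Subsection \ref{ss:com}; the source terms obey $\|\div(\delta a\,\nabla K_\ep\wt r)\|_{\dot B^{\frac d2-1}_{2,1}}\lesssim\|\delta a\|_{\dot B^{\frac d2-1}_{2,1}}\|\nabla K_\ep\wt r\|_{\dot B^{\frac d2+1}_{2,1}}$ and $\|\div((1+\check a_\f)\check w_\f)\|_{\dot B^{\frac d2-1}_{2,1}}\lesssim(1+\|\check a_\f\|_{\dot B^{\frac d2}_{2,1}})\|\check w_\f\|_{\dot B^{\frac d2}_{2,1}}$. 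Summing over $j$ one reaches
\[\|\delta a(t)\|_{\dot B^{\frac d2-1}_{2,1}}+c\int_0^t\|\Delta K_\ep\delta a\|_{\dot B^{\frac d2-1}_{2,1}}\,d\tau\lesssim\int_0^t\phi\,\|\delta a\|_{\dot B^{\frac d2-1}_{2,1}}\,d\tau+\int_0^t\|\check w_\f\|_{\dot B^{\frac d2}_{2,1}}\,d\tau,\]
where the Gronwall prefactor $\phi$ (assembled from $\check a_\f$ and $\|\nabla K_\ep\wt r\|_{\dot B^{\frac d2+1}_{2,1}}$) is in $L^1(\R_+)$ uniformly in $\f$. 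Gronwall's inequality and $\delta a(0)=0$ then give $\|\delta a\|_{L^\infty(\R_+;\dot B^{\frac d2-1}_{2,1})}+\|\Delta K_\ep\delta a\|_{L^1(\R_+;\dot B^{\frac d2-1}_{2,1})}\lesssim\|\check w_\f\|_{L^1(\R_+;\dot B^{\frac d2}_{2,1})}\lesssim\f^{-1}$. Finally, $\|\nabla^2K_\ep\delta a\|_{\dot B^{\frac d2-1}_{2,1}}\lesssim\|\Delta K_\ep\delta a\|_{\dot B^{\frac d2-1}_{2,1}}$ handles the third norm in the statement, and writing $\check u_\f+\nabla K_\ep r=\check w_\f-\nabla K_\ep\delta a$ together with $\|\nabla K_\ep\delta a\|_{\dot B^{\frac d2}_{2,1}}\approx\|\Delta K_\ep\delta a\|_{\dot B^{\frac d2-1}_{2,1}}$ gives $\|\check u_\f+\nabla K_\ep r\|_{L^1(\R_+;\dot B^{\frac d2}_{2,1})}\lesssim\f^{-1}$, which is exactly the claimed convergence with rate $\f^{-1}$.

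\emph{Expected main obstacle.} The delicate point is Step 3. The diffusive scaling inflates the higher-order norms of $\check a_\f$ (for instance $\|\check a_\f\|_{L^\infty_t\dot B^{\frac d2+1}_{2,1}}$ is only $O(\f)$, not uniformly bounded), so one must arrange the localized energy estimate so that $\check a_\f$ enters the Gronwall loop solely through its $\dot B^{\frac d2}_{2,1}$-norm (small uniformly in $\f$) and through pieces absorbable into the dissipation $\|\nabla L_\ep\delta a_j\|_{L^2}^2$; this is precisely what the $K_\ep=L_\ep\ast L_\ep$ decompositions and the commutator estimates of Subsection \ref{ss:com} are designed to do, just as for the linearized system \eqref{eq:eulerconvbis}. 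One must also keep the forcing $\div(\check\rho_\f\check w_\f)$ estimated only against $\|\check w_\f\|_{L^1(\R_+;\dot B^{\frac d2}_{2,1})}$, which is the one globally small quantity available. Since $\ep$ is fixed here, part of this analysis may, if convenient, be replaced by the cruder bound \eqref{eq:Mep} at the cost of an $\ep$-dependent constant.
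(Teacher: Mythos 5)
Your proposal is correct and follows essentially the same route as the paper: rescale to import the uniform bounds (in particular the $O(\f^{-1})$ bound on $\check w_\f$ in $L^1(\R_+;\dot B^{\frac d2}_{2,1})$ coming from \eqref{eq:ud/2b}), solve \eqref{eq:pmeps} by a degenerate parabolic energy estimate at level $\dot B^{\frac d2}_{2,1}$, and then estimate $\dr=\check\rho_\f-r$ at level $\dot B^{\frac d2-1}_{2,1}$ with the quadratic and variable-coefficient terms absorbed by smallness into the dissipation $\|\nabla^2K_\ep\dr\|_{L^1(\dot B^{\frac d2-1}_{2,1})}$ and the remainder handled by Gronwall. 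The only (immaterial) differences are organizational: the paper rewrites the error equation as transport plus pure diffusion and treats the convection via the commutator estimate \eqref{com:I}, whereas you keep the divergence-form sources and invoke a nonsymmetric product law, which works here because $\nabla K_\ep$ gains a derivative.
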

\begin{proof}
Since $a_0$ and $u_0$ are in $\dot B^{\frac d2}_{2,1}\cap \dot B^{\frac d2+2}_{2,1}$
and  Hypothesis \eqref{eq:smalldata1} holds, we are guaranteed that  the smallness condition \eqref{eq:smalldataf} is satisfied for large enough $\f.$ Hence, as explained at the beginning of
Section \ref{s:limit}, there exists a unique global solution $(\rho_\f,u_\f)$ of
\eqref{Kepsilon} with the desired properties. 

Next, in terms of $\wt r:=r-1,$ Equation \eqref{eq:pmeps} reads
\begin{equation}\label{eq:a1}\partial_t\wt r-\nabla K_\ep \wt r\cdot\nabla \wt r-\Delta K_\ep \wt r
=\wt r \Delta K_\ep \wt r,\qquad \wt r|_{t=0}=a_0.
\end{equation}
This may be seen as a degenerate convection diffusion equation. We claim that 
there exists an absolute constant $c_0$ such that for all $t\geq0,$ we have
\begin{equation}\label{eq:a2}
\|\wt r(t)\|_{\dot B^{\frac d2}_{2,1}}+c_0\int_0^t\|\nabla^2K_\ep \wt r\|_{\dot B^{\frac d2}_{2,1}}
\leq \|\wt r_{0}\|_{\dot B^{\frac d2}_{2,1}}+C\int_0^t\|\nabla^2K_\ep \wt r\|_{\dot B^{\frac d2}_{2,1}}
\|\wt r\|_{\dot B^{\frac d2}_{2,1}}\,d\tau.\end{equation}
Indeed, localizing \eqref{eq:a1} by means of $\ddj$ gives
$$\partial_t\wt r_j-\nabla K_\ep \wt r\cdot\nabla \wt r_j-\Delta K_\ep \wt r_j=\ddj(\wt r\Delta K_\ep \wt r)-[\nabla K_\ep \wt r, \ddj]\cdot\nabla \wt r.$$
Hence, taking the $L^2$ scalar product with $\wt r_j$ and integrating by parts 
in the second and third term of the left-hand side:
$$\displaylines{
\frac 12\frac d{dt}\|\wt r_j\|_{L^2}^2+\frac 12\int_{\R^d} |\wt r_j|^2\Delta K_\ep \wt r\,dx
+\int_{\R^d} \nabla \wt r_j\cdot\nabla K_\ep \wt r_j\,dx\hfill\cr\hfill=\int_{\R^d}\wt r_j\,\ddj(\wt r\Delta K_\ep \wt r)\,dx
-\int_{\R^d} \wt r_j\,[\nabla K_\ep \wt r, \ddj]\cdot\nabla \wt r\,dx.}$$
So, using the usual integration procedure and \eqref{eq:bernstein}, we get
a universal positive constant $\kappa_0$
such that
$$\displaylines{\|\wt r_j(t)\|_{L^2} +\kappa_0\int_0^t\|\nabla^2 K_\ep \wt r_j\|_{L^2}\,d\tau
\leq \|\wt r_{j,0}\|_{L^2}\hfill\cr\hfill +
\int_0^t \|\ddj(\wt r\Delta K_\ep \wt r)\|_{L^2}\,d\tau+\int_0^t\|[\nabla K_\ep \wt r, \ddj]\cdot\nabla \wt r\|_{L^2}\,d\tau+\frac12\int_0^t\|\Delta K_\ep \wt r\|_{L^\infty}\|\wt r_j\|_{L^2}\,d\tau.}
$$
Taking advantage of \eqref{eq:prod1}, \eqref{eq:keyembedding} and \eqref{com:I}, we discover that
$$ \|\ddj(\wt r\Delta K_\ep \wt r)\|_{L^2}+\|[\nabla K_\ep \wt r, \ddj]\cdot\nabla \wt r\|_{L^2}+\|\Delta K_\ep \wt r\|_{L^\infty}\|\wt r_j\|_{L^2}\leq Cc_j2^{-j\frac d2}\|\nabla^2K_\ep \wt r\|_{\dot B^{\frac d2}_{2,1}}
\|\wt r\|_{\dot B^{\frac d2}_{2,1}},$$
which after multiplication by $2^{j\frac d2}$ and summation on $j\in\Z$ completes
the proof of \eqref{eq:a2}.
\smallbreak
Having this inequality at our disposal and assuming that $\alpha_0$ in \eqref{eq:smalldata1}
is small enough, one can use the fixed point theorem 
(e.g. adapting the proof for the incompressible Navier-Stokes equations given 
in \cite[Chap. 5]{BCD})  to solve \eqref{eq:a1} globally in time. We get a unique solution $\wt r$ in 
$\cC_b(\R_+;\dot B^{\frac d2}_{2,1})$ such that
$$\|\wt r(t)\|_{\dot B^{\frac d2}_{2,1}}+\kappa_0\int_0^t\|\nabla^2K_\ep \wt r\|_{\dot B^{\frac d2}_{2,1}}
\leq 2\|a_0\|_{\dot B^{\frac d2}_{2,1}},\qquad t\in\R_+.$$
\smallbreak
Let us  drop index $\f$ for better readability. In order to prove the last part of the theorem, we observe that 
$$\partial_t\check\rho -\div(\check\rho \nabla K_\ep \check\rho)=-\div(\check\rho \check w)
\with \check w:=\check u+\nabla K_\ep\check\rho.$$
The key to the proof is that  \eqref{eq:ud/2b} after rescaling implies that 
\begin{equation}\label{eq:dr-1}\int_0^t\|\check w\|_{\dot B^{\frac d2}_{2,1}}\,d\tau\leq C\f^{-1}\bigl(\|u_0\|_{\dot B^{\frac d2}_{2,1}}^\ell +\alpha_0\bigr),\qquad t>0.\end{equation}
 The difference $\dr:=\check\rho-r$  satisfies:
$$\partial_t\dr-\div(\check\rho\nabla K_\ep\dr)=-\div(\dr \nabla K_\ep r)-\div(\check\rho \check w).$$
Putting  $\check a:=\check\rho-1$ and remembering that $r=1+\wt r,$ the above equation may be rewritten:
\begin{equation}\label{eq:dr}\partial_t\dr
+\nabla K_\ep \wt r\cdot\nabla\dr
-\Delta K_\ep\dr= \div(\check a\nabla K_\ep\dr)
-\dr \Delta K_\ep \wt r-\div((1+\check a) \check w).\end{equation}
Localizing \eqref{eq:dr} by means of Littlewood-Paley decomposition, then arguing
as for proving \eqref{eq:a2} (with regularity index $d/2-1$ instead of $d/2$), we get  for all $t>0,$
\begin{multline}\label{eq:dr1}\|\dr(t)\|_{\dot B^{\frac d2-1}_{2,1}}+\kappa_0
\int_0^t\|\nabla^2 K_\ep\dr\|_{\dot B^{\frac d2-1}_{2,1}} \lesssim
\int_0^t\|\nabla^2 K_\ep \wt r\|_{\dot B^{\frac d2}_{2,1}}\|\dr\|_{\dot B^{\frac d2-1}_{2,1}}\,d\tau
\\+\|\div(\check a\nabla K_\ep\dr)\|_{L^1(\R_+;\dot B^{\frac d2-1}_{2,1})}
+ \|\dr \Delta K_\ep \wt r \|_{L^1(\R_+;\dot B^{\frac d2-1}_{2,1})}
+\|\div((1+\check a) \check w)\|_{L^1(\R_+;\dot B^{\frac d2-1}_{2,1})}.\end{multline}
According to the product law \eqref{eq:prod1}, and to  \eqref{eq:Xf}, we have:
$$\|\div(\check a\nabla K_\ep\dr)\|_{L^1(\R_+;\dot B^{\frac d2-1}_{2,1})}
\lesssim \|\check a\|_{L^\infty(\R_+;\dot B^{\frac d2}_{2,1})}
\|\nabla K_\ep\dr\|_{L^1(\R_+;\dot B^{\frac d2}_{2,1})}\lesssim\alpha_0 
\|\nabla^2 K_\ep\dr\|_{L^1(\R_+;\dot B^{\frac d2-1}_{2,1})},$$
so this term may be absorbed  by the left-hand side of \eqref{eq:dr1}.
\smallbreak
For the next term, we have:
$$ \|\dr \Delta K_\ep  \wt r \|_{L^1(\R_+;\dot B^{\frac d2-1}_{2,1})}
\lesssim  \|\dr\|_{L^\infty(\R_+;\dot B^{\frac d2-1}_{2,1})}
\|\Delta K_\ep \wt r \|_{L^1(\R_+;\dot B^{\frac d2}_{2,1})}
\lesssim\alpha_0 \|\dr\|_{L^\infty(\R_+;\dot B^{\frac d2-1}_{2,1})}.$$
Hence,  remembering \eqref{eq:dr-1}, Inequality \eqref{eq:dr1} implies that
$$ \|\dr\|_{L^\infty(\R_+;\dot B^{\frac d2-1}_{2,1})}+\|\nabla^2 K_\ep\dr\|_{L^1(\R_+;\dot B^{\frac d2-1}_{2,1})} \leq 
  C\f^{-1}\bigl(\|u_0\|_{\dot B^{\frac d2}_{2,1}}^\ell +\alpha_0\bigr)\cdotp   $$
Since $\check u=\check w-\nabla K_\ep\check\rho,$ the above inequality and \eqref{eq:dr-1}
imply that $\check u$ tends to the limit `velocity' $z:=-\nabla K_\ep r$ with 
convergence rate $\f^{-1}$ in $L^1(\R_+;\dot B^{\frac d2}_{2,1}).$
\end{proof}


\subsection{Convergence of the relaxed system to the porous media equation}

In this part, we want to justify the limit of solutions of Equation 
\eqref{eq:pmeps} to those of the porous media equation \eqref{eq:pme}, 
when $\ep$ goes to $0.$
Our main result is stated below:
\begin{thm}\label{thm:fric2}  Consider initial data $r_{\ep,0}$ and  $n_0$  such that 
 $\wt r_{\ep,0}:=r_{\ep,0}-1$ and $\wt n_0:= n_0-1$ are in $\dot B^{d/2}_{2,1}(\R^d).$ There exists an absolute constant  $\alpha_0$ such that if 
$$\max\bigl(\|\wt r_{\ep,0}\|_{\dot B^{d/2}_{2,1}}, \|\wt n_0\|_{\dot B^{d/2}_{2,1}}\bigr)
    \leq\alpha_0, $$
then Equations \eqref{eq:pmeps} and \eqref{eq:pme} have a unique global solution $r_\ep=1+\wt r_\ep$ and $n=1+\wt n$ with $r_\ep$ given by Theorem \ref{thm:fric1}
and $\wt n\in \cC_b(\R_+;\dot B^{\frac d2}_{2,1})\cap L^1(\R_+;\dot B^{\frac d2+2}_{2,1}),$
and we have
\begin{equation}\label{est:3}
    \|\wt r_\ep, \wt n\|_{L^\infty(\R_+;\dot B^{\frac d2}_{2,1})}+
\|\nabla^2K_\ep r_\ep, \nabla^2 n\|_{L^1(\R_+;\dot B^{\frac d2}_{2,1})}\leq C \| \wt r_{\ep,0} ,\wt n_0\|_{\dot B^{\frac d2}_{2,1}}.
\end{equation}
If, furthermore, $\wt r_{\ep,0}$ tends to $\wt n_0$ in $\dot B^{\frac d2}_{2,1},$ 
then we have\footnote{Unless stronger assumptions are made
on $K_\ep,$ we do not have any rate of convergence.}:
\begin{equation}\label{eq:convn} \wt r_\ep\to \wt n\quad\hbox{in}\quad L^\infty(\R_+;\dot B^{\frac d2}_{2,1}).\end{equation}\end{thm}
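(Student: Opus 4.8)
The plan is to derive an $\ep$-uniform stability estimate for $\dr:=\wt r_\ep-\wt n$ at the critical regularity $\dot B^{\frac d2}_{2,1}$, with a right-hand side that measures the defect $(K_\ep-\Id)\wt n$ and vanishes as $\ep\to0$; then \eqref{eq:convn} follows by Gronwall's lemma. The existence parts require no new idea: $r_\ep$ is furnished by Theorem \ref{thm:fric1}, while $n=1+\wt n$ solves the nondegenerate convection--diffusion equation $\partial_t\wt n-\Delta\wt n=\div(\wt n\nabla\wt n)$, so that for $\|\wt n_0\|_{\dot B^{\frac d2}_{2,1}}\leq\alpha_0$ small the argument establishing \eqref{eq:a1}--\eqref{eq:a2} in the proof of Theorem \ref{thm:fric1}, specialized to $K_\ep=\Id$, gives $\wt n\in\cC_b(\R_+;\dot B^{\frac d2}_{2,1})\cap L^1(\R_+;\dot B^{\frac d2+2}_{2,1})$ together with \eqref{est:3}.

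Subtracting \eqref{eq:pmeps} in the form $\partial_t\wt r_\ep-\Delta K_\ep\wt r_\ep=\div(\wt r_\ep\nabla K_\ep\wt r_\ep)$ from \eqref{eq:pme} in the form $\partial_t\wt n-\Delta\wt n=\div(\wt n\nabla\wt n)$, and using $\nabla K_\ep\wt r_\ep-\nabla\wt n=\nabla K_\ep\dr+\nabla(K_\ep-\Id)\wt n$, one gets
\[
\partial_t\dr-\Delta K_\ep\dr=\nabla K_\ep\wt r_\ep\cdot\nabla\dr+\dr\,\Delta K_\ep\wt r_\ep+\nabla\wt n\cdot\nabla K_\ep\dr+\wt n\,\Delta K_\ep\dr+\div\bigl((1+\wt n)\nabla(K_\ep-\Id)\wt n\bigr).
\]
I then localize by $\ddj$, take the $L^2$ inner product with $\dr_j:=\ddj\dr$ and integrate by parts in $\Delta K_\ep\dr_j$, which as in \eqref{eq:bernstein} produces the dissipation $\|\nabla L_\ep\dr_j\|_{L^2}^2\approx\|\nabla^2K_\ep\dr_j\|_{L^2}\|\dr_j\|_{L^2}$, and estimate the right-hand side exactly along the lines of the proof of \eqref{eq:a2} and of the stability part of Theorem \ref{thm:fric1}. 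The field $\nabla K_\ep\wt r_\ep$ plays the role of a convection velocity, so by \eqref{com:I} the contribution of $\nabla K_\ep\wt r_\ep\cdot\nabla\dr$ carries the prefactor $\|\nabla^2K_\ep\wt r_\ep\|_{\dot B^{\frac d2}_{2,1}}$, which is in $L^1(\R_+)$ uniformly in $\ep$ by \eqref{est:3}; the term $\dr\,\Delta K_\ep\wt r_\ep$ is bounded in $\dot B^{\frac d2}_{2,1}$ by $\|\dr\|_{\dot B^{\frac d2}_{2,1}}\|\nabla^2K_\ep\wt r_\ep\|_{\dot B^{\frac d2}_{2,1}}$ via \eqref{eq:prod1}, again an $L^1$-in-time prefactor; the term $\wt n\,\Delta K_\ep\dr$ is bounded by $\|\wt n\|_{\dot B^{\frac d2}_{2,1}}\|\nabla^2K_\ep\dr\|_{\dot B^{\frac d2}_{2,1}}\lesssim\alpha_0\|\nabla^2K_\ep\dr\|_{\dot B^{\frac d2}_{2,1}}$ and is absorbed by the dissipation. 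The only delicate term is $\nabla\wt n\cdot\nabla K_\ep\dr$: writing $K_\ep=L_\ep^2$, $\nabla K_\ep\dr=L_\ep\nabla L_\ep\dr$, and using the $\ep$-uniform commutator estimates of Subsection \ref{ss:com} to move $L_\ep$ past $\nabla\wt n$, one bounds it in $\dot B^{\frac d2}_{2,1}$ by $\|\nabla\wt n\|_{\dot B^{\frac d2}_{2,1}}\|\nabla L_\ep\dr\|_{\dot B^{\frac d2}_{2,1}}$ plus commutators with $L^1$-in-time prefactors built from $\|\wt n\|_{\dot B^{\frac d2+2}_{2,1}}$; since the dissipation gives $\|\nabla L_\ep\dr\|_{\dot B^{\frac d2}_{2,1}}\lesssim\bigl(\|\nabla^2K_\ep\dr\|_{\dot B^{\frac d2}_{2,1}}\|\dr\|_{\dot B^{\frac d2}_{2,1}}\bigr)^{1/2}$ (Cauchy--Schwarz over the dyadic blocks), a Young inequality turns this into a fraction of the dissipation plus $C\|\nabla\wt n\|_{\dot B^{\frac d2}_{2,1}}^2\|\dr\|_{\dot B^{\frac d2}_{2,1}}$, and $\|\nabla\wt n\|_{\dot B^{\frac d2}_{2,1}}^2\in L^1(\R_+)$ because $\nabla\wt n\in L^\infty(\R_+;\dot B^{\frac d2-1}_{2,1})\cap L^1(\R_+;\dot B^{\frac d2+1}_{2,1})$ embeds in $L^2(\R_+;\dot B^{\frac d2}_{2,1})$.

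Summing over $j$ and choosing $\alpha_0$ small enough to absorb the dissipative pieces, one obtains
\[
\|\dr(t)\|_{\dot B^{\frac d2}_{2,1}}+c_0\!\int_0^t\!\|\nabla^2K_\ep\dr\|_{\dot B^{\frac d2}_{2,1}}\,d\tau\leq\|\dr_0\|_{\dot B^{\frac d2}_{2,1}}+\mathcal E_\ep+C\!\int_0^t\!\Phi\,\|\dr\|_{\dot B^{\frac d2}_{2,1}}\,d\tau,
\]
where $\Phi:=C\bigl(\|\nabla^2K_\ep\wt r_\ep\|_{\dot B^{\frac d2}_{2,1}}+\|\wt n\|_{\dot B^{\frac d2+2}_{2,1}}+\|\nabla\wt n\|_{\dot B^{\frac d2}_{2,1}}^2\bigr)\in L^1(\R_+)$ uniformly in $\ep$ (by \eqref{est:3}) and $\mathcal E_\ep:=\|\div((1+\wt n)\nabla(K_\ep-\Id)\wt n)\|_{L^1(\R_+;\dot B^{\frac d2}_{2,1})}$; Gronwall then gives $\|\dr\|_{L^\infty(\R_+;\dot B^{\frac d2}_{2,1})}\lesssim\bigl(\|\dr_0\|_{\dot B^{\frac d2}_{2,1}}+\mathcal E_\ep\bigr)\exp\bigl(C\|\Phi\|_{L^1(\R_+)}\bigr)$. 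It remains to check that $\|\dr_0\|_{\dot B^{\frac d2}_{2,1}}=\|\wt r_{\ep,0}-\wt n_0\|_{\dot B^{\frac d2}_{2,1}}$ (the hypothesis) and $\mathcal E_\ep$ both tend to $0$. For the latter, expand $\div((1+\wt n)\nabla(K_\ep-\Id)\wt n)=(K_\ep-\Id)\Delta\wt n+\nabla\wt n\cdot(K_\ep-\Id)\nabla\wt n+\wt n\,(K_\ep-\Id)\Delta\wt n$; since $0\leq\wh K_\ep\leq1$ the operator $K_\ep-\Id$ is bounded on every $\dot B^s_{2,1}$ with norm $\leq1$, and since $\wh K_\ep\to1$ uniformly on compact sets, for each fixed $z\in\dot B^s_{2,1}$ one has $(K_\ep-\Id)z\to0$ in $\dot B^s_{2,1}$ by dominated convergence over the Littlewood--Paley blocks. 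Applying this for a.e.\ $t$ with $z=\Delta\wt n(t)$ and $z=\nabla\wt n(t)$, and dominating the three products by the time-integrable functions $\|\Delta\wt n\|_{\dot B^{\frac d2}_{2,1}}$, $\|\nabla\wt n\|_{\dot B^{\frac d2}_{2,1}}^2$, $\|\wt n\|_{\dot B^{\frac d2}_{2,1}}\|\Delta\wt n\|_{\dot B^{\frac d2}_{2,1}}$ (all controlled via \eqref{est:3} and $\nabla\wt n\in L^2(\R_+;\dot B^{\frac d2}_{2,1})$), a second use of the dominated convergence theorem, now in time, gives $\mathcal E_\ep\to0$, which proves \eqref{eq:convn}.

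The main obstacle is to close the stability estimate at the critical index $\tfrac d2$ — the very regularity at which the two solutions live — rather than one derivative below, as was done for the difference in Theorem \ref{thm:fric1}: because the diffusion $-\Delta K_\ep$ degenerates precisely at high actual frequencies, one must keep track, through the frequency splitting of Section \ref{s:linear} and the commutator machinery of Subsection \ref{ss:com}, of the fact that $\nabla K_\ep$ is correspondingly small there, so that the top-order term $\nabla\wt n\cdot\nabla K_\ep\dr$ can only be disposed of by a Young inequality against the (degenerate) dissipation while every remaining nonlinear prefactor is either $\lesssim\alpha_0$ or $L^1$-in-time uniformly in $\ep$. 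By contrast the vanishing of the defect $\mathcal E_\ep$ is soft, and no rate of convergence is to be expected, since $K_\ep-\Id$ is controlled only through the pointwise Fourier convergence $\wh K_\ep\to1$ with no quantitative modulus.
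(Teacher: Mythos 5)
Your proposal is correct and follows essentially the same route as the paper: the same difference equation for $\wt r_\ep-\wt n$ viewed as a degenerate convection--diffusion equation, the same localized energy estimate with the dissipation $\|\nabla^2K_\ep\cdot\|_{\dot B^{d/2}_{2,1}}$, absorption of the quadratic terms by smallness or by $L^1$-in-time prefactors, and a soft dominated-convergence argument for the defect $(K_\ep-\Id)\nabla\wt n$. The only (harmless) deviation is in the cross term $\nabla\wt n\cdot\nabla K_\ep\dr$, where you interpolate only the $\dr$ factor and apply Young against $\|\nabla\wt n\|_{\dot B^{d/2}_{2,1}}^2\in L^1(\R_+)$, whereas the paper interpolates both factors symmetrically and lands on $\|\Delta\wt n\|_{\dot B^{d/2}_{2,1}}\|\dr\|_{\dot B^{d/2}_{2,1}}$; both rest on the same available bounds.
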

\begin{proof}
The existence of $r_\ep$ with the desired properties follows from Theorem \ref{thm:fric1}.
 Next, as for \eqref{eq:pmeps}, since $\wt n_0:=n_0-1$ is small in $\dot B^{\frac d2}_{2,1},$
it is easy to see by variations on the fixed point theorem 
that there exists a unique solution $n$ to \eqref{eq:pme} satisfying the properties mentioned in the 
above statement. 
Let us prove the convergence of $r_\ep$ to $n$.
Set $\dn:=\wt n-\wt r_\ep.$ We have:
$$\partial_t\dn-\div(n\nabla K_\ep \dn)=\div(\dn\nabla K_\ep r_\ep)+\div(n(\Id-K_\ep)\nabla n).$$
We rewrite this expression in the form of a degenerate convection diffusion equation as follows:
$$\partial_t\dn-\nabla\dn\cdot\nabla K_\ep r_\ep-\Delta K_\ep\dn=\nabla\wt n\cdot\nabla K_\ep\dn +
\wt n\Delta K_\ep\dn+\dn\Delta K_\ep r_\ep
+\div((1+\wt n)(\Id-K_\ep)\nabla n).$$
Hence, arguing as in the proof of Theorem \ref{thm:fric1},  we get 
\begin{multline}\label{eq:dn}
\|\dn(t)\|_{\dot B^{\frac d2}_{2,1}}+\kappa_0\int_0^t\|\nabla^2K_\ep\dn\|_{\dot B^{\frac d2}_{2,1}}\,d\tau
\leq \|\dn_{0}\|_{\dot B^{\frac d2}_{2,1}}
+\int_0^t\|\nabla^2K_\ep\wt r_\ep\|_{\dot B^{\frac d2}_{2,1}}\|\dn\|_{\dot B^{\frac d2}_{2,1}}\,d\tau
\\+ \int_0^t\|\nabla\wt n\cdot\nabla K_\ep\dn\|_{\dot B^{\frac d2}_{2,1}}\,d\tau 
+\int_0^t\|\wt n\Delta K_\ep\dn\|_{\dot B^{\frac d2}_{2,1}}\,d\tau
+\int_0^t\|\dn\Delta K_\ep \wt r_\ep\|_{\dot B^{\frac d2}_{2,1}}\,d\tau\\
+\int_0^t\|(1+\wt n)(\Id-K_\ep)\Delta \wt n\|_{\dot B^{\frac d2}_{2,1}}\,d\tau
+\int_0^t\|\nabla\wt n\cdot(\Id-K_\ep)\nabla \wt n\|_{\dot B^{\frac d2}_{2,1}}\,d\tau.    
\end{multline}
From product law \eqref{eq:prod1}, we have:
$$\begin{aligned}
\|\nabla\wt n\cdot\nabla K_\ep\dn\|_{\dot B^{\frac d2}_{2,1}}&\lesssim
\|\nabla\wt n\|_{\dot B^{\frac d2}_{2,1}}\|\nabla K_\ep\dn\|_{\dot B^{\frac d2}_{2,1}},\\
\|\wt n\Delta K_\ep\dn\|_{\dot B^{\frac d2}_{2,1}}&\lesssim
\|\wt n\|_{\dot B^{\frac d2}_{2,1}}\|\Delta K_\ep\dn\|_{\dot B^{\frac d2}_{2,1}},\\
\|\dn\Delta K_\ep \wt r_\ep\|_{\dot B^{\frac d2}_{2,1}}&\lesssim
\|\dn\|_{\dot B^{\frac d2}_{2,1}}\|\Delta K_\ep \wt r_\ep\|_{\dot B^{\frac d2}_{2,1}},\\
\|(1+\wt n)(\Id-K_\ep)\Delta \wt n\|_{\dot B^{\frac d2}_{2,1}}&\lesssim
\bigl(1+\|\wt n\|_{\dot B^{\frac d2}_{2,1}}\bigr) \|(\Id-K_\ep)\Delta \wt n\|_{\dot B^{\frac d2}_{2,1}},\\
\|\nabla\wt n\cdot(\Id-K_\ep)\nabla \wt n\|_{\dot B^{\frac d2}_{2,1}}&\lesssim
\|\nabla \wt n\|_{\dot B^{\frac d2}_{2,1}}
\|(\Id-K_\ep)\nabla \wt n\|_{\dot B^{\frac d2}_{2,1}}.
\end{aligned}$$
As we work with small solutions, the second, fourth  and fifth terms of the right-hand side above may be absorbed by the 
left-hand side of \eqref{eq:dn}. Next, by interpolation, we have
$$\begin{aligned}
\|\nabla\wt n\|_{\dot B^{\frac d2}_{2,1}}\|\nabla K_\ep\dn\|_{\dot B^{\frac d2}_{2,1}}&\lesssim
\|\wt n\|_{\dot B^{\frac d2}_{2,1}}^{1/2}\|K_\ep\dn\|_{\dot B^{\frac d2}_{2,1}}^{1/2}
\|\Delta\wt n\|_{\dot B^{\frac d2}_{2,1}}^{1/2}\|\Delta K_\ep\dn\|_{\dot B^{\frac d2}_{2,1}}^{1/2}\\
&\lesssim \|\wt n\|_{\dot B^{\frac d2}_{2,1}}\|\Delta K_\ep\dn\|_{\dot B^{\frac d2}_{2,1}}
+\|\Delta\wt n\|_{\dot B^{\frac d2}_{2,1}}\|\dn\|_{\dot B^{\frac d2}_{2,1}}.
\end{aligned}$$
Hence the corresponding term may also be absorbed with  the left-hand side of \eqref{eq:dn}.
\smallbreak
Finally, in light of Lebesgue's dominated convergence theorem, 
since $\Delta\wt n\in L^1(\R_+;\dot B^{\frac d2}_{2,1})$ and 
$\nabla\wt n\in \wt L^2(\R_+;\dot B^{\frac d2}_{2,1}),$
we have
$$\lim_{\ep\to0}\|(\Id-K_\ep)\Delta \wt n\|_{L^1(\R_+;\dot B^{\frac d2}_{2,1})}=0\andf
\lim_{\ep\to0}\|(\Id-K_\ep)\nabla \wt n\|_{L^2(\R_+;\dot B^{\frac d2}_{2,1})}=0.$$
Plugging all this information in \eqref{eq:dn} completes the proof of \eqref{eq:convn}. 
\end{proof}

\subsection{Convergence of System \eqref{Kepsilon}  to the porous media equation}

The convergence of the density, solution of System \eqref{Kepsilon}
to the solution of  the porous media equation \eqref{eq:pme} when both $\f\to\infty$ and $\ep\to0$ may be deduced  from Theorems \ref{thm:fric1} and \ref{thm:fric2}.
Let $(\rho_{\f,\ep},u_{\f,\ep})$ be the solution of \eqref{Kepsilon}
and  $(\check \rho_{\f,\ep},\check u_{\f,\ep})$ be the corresponding rescaled solution (see \eqref{eq:diffusive}).
Let $r_\ep$ be the solution of \eqref{eq:a1} with data $\rho_0=1+a_0$ and, 
finally, $n$ the solution to \eqref{eq:pme} with the same data (for simplicity).
We have
$$\check \rho_{\f,\ep}-n=(\check \rho_{\f,\ep}-r_\ep)+(r_\ep-n).$$
Hence, in light of  Theorems \ref{thm:fric1} and \ref{thm:fric2}, one 
may conclude to the following result:
\begin{thm}\label{thm:fric3} Take $a_0$ and $u_0$ as in Theorem \ref{thm:fric1}.
Let $n$ be the solution to \eqref{eq:pme} with data $1+a_0.$ Let  $(\check \rho_{\f,\ep},\check u_{\f,\ep})$ be the solution of \eqref{Kepsilon} after rescaling \eqref{eq:diffusive}. 
Then 
$$\check \rho_{\f,\ep}-n \to 0\ \hbox{ in }\ L^\infty(\R_+;\dot B^{\frac d2-1}_{2,1}+ \dot B^{\frac d2}_{2,1})
\ \hbox{ as }\ \f\to\infty\andf \ep\to0.$$
\end{thm}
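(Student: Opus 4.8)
The plan is to combine Theorems~\ref{thm:fric1} and~\ref{thm:fric2} through the decomposition already indicated above, namely
\[
\check\rho_{\f,\ep}-n=(\check\rho_{\f,\ep}-r_\ep)+(r_\ep-n),
\]
measuring the first difference in $\dot B^{\frac d2-1}_{2,1}$ (the space in which Theorem~\ref{thm:fric1} controls $\check\rho_\f-r_\ep$) and the second one in $\dot B^{\frac d2}_{2,1}$ (the space in which Theorem~\ref{thm:fric2} controls $r_\ep-n$). First I would set $\alpha_0$ to be the smaller of the two absolute constants appearing in Theorems~\ref{thm:fric1} and~\ref{thm:fric2}, so that the smallness hypothesis $\|a_0\|_{\dot B^{\frac d2}_{2,1}}\leq\alpha_0$, together with the regularity $a_0,u_0\in\dot B^{\frac d2}_{2,1}\cap\dot B^{\frac d2+2}_{2,1}$, makes both theorems applicable. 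Since here the data of \eqref{eq:a1} and of \eqref{eq:pme} are both taken equal to $1+a_0$, one has $\wt r_{\ep,0}=\wt n_0=a_0$, hence the convergence hypothesis $\wt r_{\ep,0}\to\wt n_0$ of Theorem~\ref{thm:fric2} is trivially satisfied. This provides, for every $\ep>0$ and every large enough $\f$, the rescaled solution $(\check\rho_{\f,\ep},\check u_{\f,\ep})$ of \eqref{Kepsilon}, the solution $r_\ep=1+\wt r_\ep$ of \eqref{eq:a1}, and the solution $n=1+\wt n$ of \eqref{eq:pme}.

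Next I would quote the two quantitative outputs. From Theorem~\ref{thm:fric1},
\[
\|\check\rho_{\f,\ep}-r_\ep\|_{L^\infty(\R_+;\dot B^{\frac d2-1}_{2,1})}\leq C\f^{-1}\bigl(\|u_0\|^\ell_{\dot B^{\frac d2}_{2,1}}+\alpha_0\bigr),
\]
and from Theorem~\ref{thm:fric2} (applied with $\wt r_{\ep,0}=\wt n_0$), $\wt r_\ep\to\wt n$ in $L^\infty(\R_+;\dot B^{\frac d2}_{2,1})$ as $\ep\to0$, i.e. $\|r_\ep-n\|_{L^\infty(\R_+;\dot B^{\frac d2}_{2,1})}\to0$. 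By the triangle inequality in the sum space $\dot B^{\frac d2-1}_{2,1}+\dot B^{\frac d2}_{2,1}$,
\[
\|\check\rho_{\f,\ep}-n\|_{L^\infty(\R_+;\dot B^{\frac d2-1}_{2,1}+\dot B^{\frac d2}_{2,1})}\leq\|\check\rho_{\f,\ep}-r_\ep\|_{L^\infty(\R_+;\dot B^{\frac d2-1}_{2,1})}+\|r_\ep-n\|_{L^\infty(\R_+;\dot B^{\frac d2}_{2,1})},
\]
and the conclusion follows by letting $\f\to\infty$ and $\ep\to0$: given $\delta>0$, pick $\ep$ so small that the second term is below $\delta/2$, then $\f$ so large that the first term is below $\delta/2$.

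The one point that genuinely needs checking — and which I view as the heart of the argument — is that the constant $C$ in the $\f^{-1}$ rate of Theorem~\ref{thm:fric1} is independent of $\ep$. This has to be extracted from the proof of that theorem: the uniform estimates \eqref{eq:Xf} and \eqref{eq:ud/2b} for $(\rho_\f,u_\f)$, and the bound $\|\wt r_\ep(t)\|_{\dot B^{\frac d2}_{2,1}}+\kappa_0\int_0^t\|\nabla^2K_\ep\wt r_\ep\|_{\dot B^{\frac d2}_{2,1}}\leq 2\|a_0\|_{\dot B^{\frac d2}_{2,1}}$ for the degenerate diffusion solution, are all $\ep$-free (they ultimately rest on Theorem~\ref{thm:linear}, whose constants are $\ep$-independent). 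Granting this, the $\f\to\infty$ convergence is \emph{uniform} with respect to $\ep$, which is precisely what makes the iterated limit legitimate and order-free. Note that no rate can be expected for the full double limit, since the $\ep\to0$ convergence in Theorem~\ref{thm:fric2} carries none (cf. the footnote there); the statement is therefore the best attainable at this level of generality. Beyond the decomposition, nothing further is required: the remaining steps are just the triangle inequality and passage to the limit.
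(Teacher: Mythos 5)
Your proposal is correct and follows exactly the route the paper takes: the decomposition $\check\rho_{\f,\ep}-n=(\check\rho_{\f,\ep}-r_\ep)+(r_\ep-n)$, with the first term controlled in $\dot B^{\frac d2-1}_{2,1}$ by Theorem \ref{thm:fric1} and the second in $\dot B^{\frac d2}_{2,1}$ by Theorem \ref{thm:fric2}. Your added remarks on the $\ep$-uniformity of the constant in the $\f^{-1}$ rate and on the trivially satisfied hypothesis $\wt r_{\ep,0}=\wt n_0$ are sound and merely make explicit what the paper leaves implicit.
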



\section{Appendix}


\subsection{Commutator estimates}\label{ss:com}

As a first, we recall two product laws in Besov spaces that we used repeatedly in the paper
(the reader may refer to \cite[Chap. 2]{BCD} for more details):
\begin{eqnarray}\label{eq:prod1}
\|fg\|_{\dot B^\sigma_{2,1}}\leq C\|f\|_{\dot B^{\frac d2}_{2,1}}\|g\|_{\dot B^\sigma_{2,1}},\qquad -d/2<\sigma\leq d/2,\\\label{eq:prod2}
\|fg\|_{\dot B^\sigma_{2,1}}\leq 
C\Bigl(\|f\|_{L^\infty}\|g\|_{\dot B^\sigma_{2,1}}+
\|g\|_{L^\infty}\|f\|_{\dot B^\sigma_{2,1}}\Bigr),\qquad \sigma>0.
\end{eqnarray}
The latter inequality is often  combined with the embedding
\begin{equation}\label{eq:keyembedding}
\dot B^{\frac d2}_{2,1}\hookrightarrow L^\infty.\end{equation}
In the rest of this part, we focus on  the commutators estimates 
that we used for handling the terms
$v\cdot\nabla a,$ $v\cdot \nabla u,$  $b\,\div u$ 
and $c\nabla K_\ep a$ in System \eqref{eq:eulerconvbis}.
\smallbreak
The  following commutator estimate belongs to the mathematical folklore 
(see  \cite[Chap.~2]{BCD}):
\begin{lem}
Let  $ -d/2<\sigma\leq 1+d/2.$ Then, for $v \in \dot B^{d/2 +1}_{2,1}$ and 
$z \in \dot B^{\sigma}_{2,1},$ it  holds that
\begin{equation}\label{com:I}
\| [v,\ddj]\nabla z\|_{L^2}\leq Cc_j 2^{-j\sigma}\|\nabla v\|_{\dot B^{\frac d2}_{2,1}}\|z\|_{\dot B^\sigma_{2,1}},
\end{equation}
where    $(c_j)_{j\in\Z}$ denotes a nonnegative sequence with sum equal to $1.$ 
\end{lem}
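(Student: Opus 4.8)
The plan is to reduce everything to Bony's paradifferential calculus, following \cite[Chap.~2]{BCD}. With the summation convention on $m$, write
\[
[v,\ddj]\nabla z=\ddj(v^m\partial_m z)-v^m\partial_m\ddj z,
\]
and apply Bony's decomposition to both products, $v^m\partial_m z=T_{v^m}\partial_m z+T_{\partial_m z}v^m+R(v^m,\partial_m z)$ and similarly with $z$ replaced by $\ddj z$. Regrouping, the commutator splits into a \emph{genuine commutator} coming from the $T_{v^m}\partial_m(\cdot)$ pieces, namely $\sum_{|j'-j|\le N_0}[\ddj,\dot S_{j'-1}v^m]\dot\Delta_{j'}\partial_m z$ for a fixed integer $N_0$, plus \emph{off-diagonal} terms built from $\ddj(T_{\partial_m z}v^m)$, $\ddj R(v^m,\partial_m z)$ and their counterparts with $\ddj z$ in place of $z$.

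For the genuine commutator I would use the integral representation $\ddj f=2^{jd}h(2^j\cdot)\star f$ with $h\in\cS(\R^d)$, so that
\[
[\ddj,\dot S_{j'-1}v^m]g(x)=\int_{\R^d}2^{jd}h(2^j(x-y))\bigl(\dot S_{j'-1}v^m(x)-\dot S_{j'-1}v^m(y)\bigr)g(y)\,dy.
\]
Bounding the difference by the mean value inequality $|\dot S_{j'-1}v^m(x)-\dot S_{j'-1}v^m(y)|\le|x-y|\,\|\nabla\dot S_{j'-1}v\|_{L^\infty}$ and using $\||\cdot|\,2^{jd}h(2^j\cdot)\|_{L^1}\lesssim2^{-j}$, Young's inequality gives $\|[\ddj,\dot S_{j'-1}v^m]g\|_{L^2}\lesssim2^{-j}\|\nabla v\|_{L^\infty}\|g\|_{L^2}$. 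Taking $g=\dot\Delta_{j'}\partial_m z$, Bernstein's inequality yields $\|\dot\Delta_{j'}\partial_m z\|_{L^2}\lesssim2^{j'}\|\dot\Delta_{j'}z\|_{L^2}$, the embedding $\dot B^{d/2}_{2,1}\hookrightarrow L^\infty$ controls $\|\nabla v\|_{L^\infty}$ by $\|\nabla v\|_{\dot B^{d/2}_{2,1}}$, and writing $2^{j'}\|\dot\Delta_{j'}z\|_{L^2}=2^{j'(1-\sigma)}c_{j'}\|z\|_{\dot B^\sigma_{2,1}}$ and summing the finitely many $j'$ with $|j'-j|\le N_0$ produces the claimed bound $Cc_j2^{-j\sigma}\|\nabla v\|_{\dot B^{d/2}_{2,1}}\|z\|_{\dot B^\sigma_{2,1}}$, the $c_{j'}$ over the finite band being re-absorbed into a new summable sequence $c_j$.

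The remaining off-diagonal terms carry no cancellation, but they are lower order because $\ddj$ forces a spectral localization at frequency $\sim2^j$: here one gains a derivative from Bernstein's inequality instead of from a commutator. I would bound them with the standard paraproduct estimate $\|T_ab\|_{\dot B^\sigma_{2,1}}\lesssim\|a\|_{L^\infty}\|b\|_{\dot B^\sigma_{2,1}}$ (and its negative-index refinement) together with the remainder estimate $\|R(a,b)\|_{\dot B^{s_1+s_2-d/2}_{2,1}}\lesssim\|a\|_{\dot B^{s_1}_{2,1}}\|b\|_{\dot B^{s_2}_{2,1}}$ valid for $s_1+s_2>0$; concretely this amounts to placing $v^m\partial_m z$ in $\dot B^{\sigma-1}_{2,1}$ via \eqref{eq:prod1} with $f=v^m\in\dot B^{d/2+1}_{2,1}$, $g=\partial_m z\in\dot B^{\sigma-1}_{2,1}$, and then recovering the derivative lost in $\partial_m$ from the localization at frequency $2^j$. \textbf{The main point to watch} is exactly the two endpoints on $\sigma$: the lower bound $\sigma>-d/2$ is the convergence condition for the remainder $R(v^m,\partial_m z)$, while the upper bound $\sigma\le1+d/2$ is what makes \eqref{eq:prod1} applicable to $v^m\partial_m z$ (i.e.\ $\partial_m z\in\dot B^{\sigma-1}_{2,1}$ with $\sigma-1\le d/2$); in the borderline case $\sigma=1+d/2$ one has $\nabla z\in\dot B^{d/2}_{2,1}\hookrightarrow L^\infty$, so it is still covered through \eqref{eq:prod2} and \eqref{eq:keyembedding}. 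Since every step is entirely classical, in practice I would simply invoke \cite[Chap.~2]{BCD}.
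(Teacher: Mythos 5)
Your proposal is correct and is precisely the standard argument: the paper offers no proof of this lemma, simply attributing it to the folklore of \cite[Chap.~2]{BCD}, and your sketch (Bony decomposition, mean-value/kernel bound for the genuine commutator piece $[\ddj,\dot S_{j'-1}v^m]\dot\Delta_{j'}\partial_m z$, paraproduct and remainder estimates for the off-diagonal pieces) reproduces that reference's proof, with the two endpoint restrictions on $\sigma$ correctly identified. It is also consistent with how the paper itself handles the nearby commutators \eqref{com:III} and \eqref{com:IV}, so nothing further is needed.
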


The next commutator estimate is connected to the operator $L_\ep$ satisfying \eqref{eq:Kep}.
\begin{lem}   
Let $\sigma >-d/2.$
There exists a constant $C$ \emph{independent of $\ep$} such that we have:
\begin{equation}\label{com:II}
    \|[c,\partial_k L_\ep ] h \|_{\dot B^\sigma_{2,1}} \leq C\bigl( 
   \|c\|_{\dot B^{\frac d2+1}_{2,1}} \|h\|_{\dot B^{\sigma}_{2,1}}
    +\|h\|_{L^\infty} \|c\|_{\dot B^{\sigma +1}_{2,1}}\bigr),\qquad k\in\{1,\cdots,d\}.
\end{equation}
Moreover, if $-d/2<\sigma\leq d/2,$ then the second term is not needed.     
\medbreak
We  also have 
\begin{equation}\label{com:IIb}
    \|[c,L_\ep ] h \|_{\dot B^\sigma_{2,1}} \leq C
   \bigl(\|c\|_{\dot B^{\frac d2+1}_{2,1}} \|h\|_{\dot B^{\sigma-1}_{2,1}}
   +\|h\|_{L^\infty}\|c\|_{\dot B^\sigma_{2,1}}\bigr), \end{equation}
and the second term is not needed for $-d/2<\sigma\leq d/2+1.$
\end{lem}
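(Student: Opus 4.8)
The plan is to prove \eqref{com:IIb} first and then to deduce \eqref{com:II} from it. The reduction rests on the elementary identity, valid because $\partial_k$ and $L_\ep$ are commuting Fourier multipliers,
\begin{equation}\label{plan:id}
[c,\partial_kL_\ep]h=[c,L_\ep]\partial_kh-L_\ep\bigl((\partial_kc)\,h\bigr).
\end{equation}
Since the symbol $\wh L_\ep$ takes values in $[0,1]$, one has $\|L_\ep z\|_{\dot B^\sigma_{2,1}}\le\|z\|_{\dot B^\sigma_{2,1}}$, so the last term of \eqref{plan:id} is bounded by $\|(\partial_kc)h\|_{\dot B^\sigma_{2,1}}$, which the product laws \eqref{eq:prod1}--\eqref{eq:prod2} and the embedding \eqref{eq:keyembedding} control by the right-hand side of \eqref{com:II}. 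The first term of \eqref{plan:id} is handled by applying \eqref{com:IIb} with $\partial_kh$ in place of $h$: in the range $\sigma\le d/2+1$ the $L^\infty$-term of \eqref{com:IIb} is then absent and $\|\partial_kh\|_{\dot B^{\sigma-1}_{2,1}}\lesssim\|h\|_{\dot B^\sigma_{2,1}}$, which again matches \eqref{com:II}. The remaining range $\sigma>d/2+1$ (not used in the paper) would be obtained by running the argument below directly for $\partial_kL_\ep$. So everything reduces to \eqref{com:IIb}.

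For \eqref{com:IIb} I would use Bony's decomposition, writing $ch=T_ch+R(c,h)+T_hc$ and $c\,L_\ep h=T_c(L_\ep h)+R(c,L_\ep h)+T_{L_\ep h}c$, and---using that $L_\ep$ commutes with the operators $S_{j-1}$ and with enlarged dyadic blocks $\widetilde\Delta_j$---split
$$[c,L_\ep]h=\bigl(T_c(L_\ep h)-L_\ep(T_ch)\bigr)+\bigl(T_{L_\ep h}c-L_\ep(T_hc)\bigr)+\bigl(R(c,L_\ep h)-L_\ep(R(c,h))\bigr).$$
The last bracket is not a commutator: using $\|L_\ep\cdot\|\le\|\cdot\|$ and the standard remainder estimate $\|R(c,z)\|_{\dot B^\sigma_{2,1}}\lesssim\|c\|_{\dot B^{d/2+1}_{2,1}}\|z\|_{\dot B^{\sigma-1}_{2,1}}$ (valid for $\sigma>-d/2$) disposes of it. For the middle bracket, $S_{j-1}(L_\ep h)=L_\ep(S_{j-1}h)$ turns each dyadic piece into $\int L_\ep(y)\,S_{j-1}h(\cdot-y)\bigl(\ddj c(\cdot)-\ddj c(\cdot-y)\bigr)\,dy$; bounding $\|\ddj c(\cdot)-\ddj c(\cdot-y)\|_{L^2}\lesssim\|\ddj c\|_{L^2}\min(1,2^j|y|)$ and $\int|L_\ep(y)|\min(1,2^j|y|)\,dy\lesssim\|L_\ep\|_{L^1}$ (splitting at $|y|=2^{-j}$) gives $\lesssim c_j2^{-j\sigma}\|S_{j-1}h\|_{L^\infty}\|c\|_{\dot B^\sigma_{2,1}}$; summing and using the Bernstein bound $\|S_{j-1}h\|_{L^\infty}\lesssim 2^{j(d/2+1-\sigma)}\|h\|_{\dot B^{\sigma-1}_{2,1}}$ for $\sigma\le d/2+1$ (resp. $\|S_{j-1}h\|_{L^\infty}\lesssim\|h\|_{L^\infty}$ for $\sigma>d/2+1$) produces the first (resp. second) term of \eqref{com:IIb}.

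The core is the first bracket, which I would localize as $\sum_j[S_{j-1}c,L_\ep]\ddj h$, each summand being spectrally supported in an annulus $\{|\xi|\sim2^j\}$. Denoting by $g_{\ep,j}$ the kernel of the truncated operator $L_\ep\widetilde\Delta_j$, one has $[S_{j-1}c,L_\ep]\ddj h(x)=\int g_{\ep,j}(y)\bigl(S_{j-1}c(x)-S_{j-1}c(x-y)\bigr)\ddj h(x-y)\,dy$, hence
$$\|[S_{j-1}c,L_\ep]\ddj h\|_{L^2}\le\|\nabla S_{j-1}c\|_{L^\infty}\,\bigl\|\,|\cdot|\,g_{\ep,j}\bigr\|_{L^1}\,\|\ddj h\|_{L^2}.$$
The decisive point is the $\ep$-uniform bound $\bigl\|\,|\cdot|\,g_{\ep,j}\bigr\|_{L^1}\lesssim 2^{-j}$: since $g_{\ep,j}=L_\ep*\widetilde\psi_j$ with $\widetilde\psi_j$ the kernel of $\widetilde\Delta_j$, which has all moments equal to zero ($\widehat{\widetilde\Delta_j}$ vanishing near the origin), a first-order Taylor expansion gives $\bigl\|\,|\cdot|\,g_{\ep,j}\bigr\|_{L^1}\lesssim 2^{-j}\bigl(\|L_\ep\|_{L^1}+\|z\nabla L_\ep\|_{L^1}\bigr)$, which is finite uniformly in $\ep$ by \eqref{eq:Kep}. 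Together with $\|\nabla S_{j-1}c\|_{L^\infty}\lesssim\|c\|_{\dot B^{d/2+1}_{2,1}}$ (from \eqref{eq:keyembedding}), summing $2^{j\sigma}\|\cdot\|_{L^2}$ in $j$ yields the first term of \eqref{com:IIb}.

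The hard part is precisely this last estimate: controlling, uniformly in $\ep$, the interaction of $L_\ep$ with Littlewood--Paley blocks, that is, the kernel-moment bounds for $L_\ep\widetilde\Delta_j$ (and, for the more delicate commutators $R^2_j$ and $I^1_j$ occurring in the main proof, their second-order analogues---where the weight-two moment $\|(z\otimes z)\nabla^2L_\ep\|_{L^1}$ of \eqref{eq:Kep} enters, via a second-order Taylor expansion). Once these kernel estimates are in place, everything else is routine bookkeeping with Bony's decomposition and the product laws \eqref{eq:prod1}--\eqref{eq:prod2}.
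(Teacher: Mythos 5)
Your argument is correct and follows essentially the same strategy as the paper: Bony's decomposition, paraproduct continuity for the off-diagonal pieces, and a first-order mean-value/Taylor argument on the paraproduct commutator $\sum_j[\dot S_{j-1}c,L_\ep]\ddj h$, reduced to a uniform-in-$\ep$ weighted $L^1$ bound on the kernel of $L_\ep$ composed with a (fattened) dyadic block. Two genuine variations are worth recording. First, you derive \eqref{com:II} from \eqref{com:IIb} via $[c,\partial_kL_\ep]h=[c,L_\ep]\partial_kh-L_\ep((\partial_kc)h)$, while the paper proves \eqref{com:II} directly by the same Bony scheme; the direct route is slightly more economical there because the needed kernel bound $\sup_\ep\|z\,\partial_kL_\ep\|_{L^1}<\infty$ is an explicit hypothesis in \eqref{eq:Kep} (no gain of $2^{-j}$ and no moment-vanishing trick are required), and it covers the full range $\sigma>-d/2$ at once, whereas your reduction only covers $\sigma\le d/2+1$ and must fall back on the direct argument beyond that (you acknowledge this, and indeed the paper never uses that range). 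Second, and more substantially, you prove the decisive estimate $\||\cdot|\,g_{\ep,j}\|_{L^1}\lesssim 2^{-j}$ (the paper's \eqref{eq:hj}) on the physical side via the vanishing moments of the dyadic kernel, whereas the paper computes $\cF(zL_{\ep,j'})$ and concludes by Young's inequality together with the symbol condition $\xi\cdot\nabla\wh L_\ep\lesssim\wh L_\ep$. Your route is valid and arguably more transparent, but as written the phrase ``a first-order Taylor expansion'' is compressed to the point of being misleading: Taylor-expanding $L_\ep$ itself produces the quantity $\|\nabla L_\ep\|_{L^1}$, which is \emph{not} controlled uniformly in $\ep$ by \eqref{eq:Kep}. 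The correct implementation is to first split the weight, $y\,g_{\ep,j}=(wL_\ep)\ast\widetilde\psi_j+L_\ep\ast(z\widetilde\psi_j)$, bound the second term by Young's inequality using $\|z\widetilde\psi_j\|_{L^1}\approx 2^{-j}$, and apply the zero-mean of $\widetilde\psi_j$ plus first-order Taylor to the map $w\mapsto wL_\ep(w)$, whose derivative is controlled precisely by $\|L_\ep\|_{L^1}+\|z\nabla L_\ep\|_{L^1}$, i.e.\ exactly the quantities you name. With that precision the proof is complete and matches the paper's conclusion.
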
   
{\bf Proof.}
  One can take advantage  of the following (simplified) Bony decomposition:
\begin{equation}\label{eq:bony}fg=T_fg+T'_gf\with T_fg:=\sum_j \dot S_{j-1}f\,\ddj g\andf T'_{g}f:=\sum_j\dot S_{j+2}g\,\ddj f.\end{equation}
Now, using the paraproduct operators $T$ and $T',$ we have the decomposition:
\begin{equation}\label{eq:LF4b}[c,\partial_k L_\ep]h=[T_c,\partial_k L_\ep]h +T'_{\partial_k  L_\ep h}c-\partial_k L_\ep   T'_{h}c. \end{equation}
The last two terms may be bounded according to continuity results for the paraproduct
(see  \cite[Chap. 2]{BCD}):
if $\sigma>-d/2,$ then we have  (using  \eqref{eq:Kep})
\begin{equation}\label{eq:LF5}   \|T'_{\partial_k L_\ep h}c\|_{\dot B^\sigma_{2,1}}\lesssim
    \|\partial_k L_\ep h\|_{\dot B^{\sigma-1}_{2,1}}\|c\|_{\dot B^{d/2+1}_{2,1}}
\lesssim \| h\|_{\dot B^\sigma_{2,1}} \|c\|_{\dot B^{d/2+1}_{2,1}}\end{equation}
and, if $\sigma>0$
\begin{equation}\label{eq:LF6}
 \|\partial_k L_\ep T'_{h}c\|_{\dot B^\sigma_{2,1}}\lesssim
\|T'_{h}c\|_{\dot B^{\sigma+1}_{2,1}}\lesssim
 \|h\|_{L^\infty}\|c\|_{\dot B^{\sigma+1}_{2,1}}.
\end{equation}
Note that for $-d/2<\sigma\leq d/2,$ then we  have 
\begin{equation}\label{eq:LF6a}
 \|\partial_k L_\ep T'_{h}c\|_{\dot B^\sigma_{2,1}}\lesssim
\|T'_{h}c\|_{\dot B^{\sigma+1}_{2,1}}\lesssim
 \|h\|_{\dot B^\sigma_{2,1}}\|c\|_{\dot B^{d/2+1}_{2,1}}.
\end{equation}
For the first term  in the right-hand side of \eqref{eq:LF4b} we write that, by definition of paraproduct,
$$[T_c,\partial_k L_\ep]h=\sum_j [\dot S_{j-1}c,\partial_k L_\ep]\ddj h.$$
Now, from  the mean value formula, we gather 
$$[\dot S_{j-1}c,\partial_k L_\ep]\ddj h(x)=\int_0^1\!\!\int_{\R^d} \nabla\dot S_{j-1}c(y+\tau (x-y))\cdot (x-y)\: \partial_k L_\ep(x-y)\ddj h(y)\,dy.$$
Hence, for all $j\in\Z,$
$$\|[\dot S_{j-1}c,\partial_k L_\ep]\ddj h\|_{L^2} \leq  \|z\partial_k L_\ep\|_{L^1}
\|\nabla\dot S_{j-1}c\|_{L^\infty} \|\ddj h\|_{L^2}.$$
From this, Condition \eqref{eq:Kep} and Lemma 2.23 in \cite{BCD}, we deduce that
\begin{equation}\label{eq:LF7}
\|[T_c,\partial_k L_\ep]h\|_{\dot B^\sigma_{2,1}}
\lesssim \|\nabla c\|_{L^\infty}\|h\|_{\dot B^\sigma_{2,1}}
\end{equation}
and, in light of \eqref{eq:keyembedding}, we conclude to \eqref{com:II} 
\medbreak
Proving \eqref{com:IIb} is similar : we start from the decomposition
$$[c,L_\ep]h=[T_c,L_\ep]h +T'_{L_\ep h}c- L_\ep   T'_{h}c. $$
The last two terms may be bounded by means of standard continuity results for the paraproduct operator. 
To handle the first one, we introduce the function $L_{\ep,j'}:=\cF^{-1}(\wh L_\ep \varphi(2^{-j'}\cdot))$ and  write that 
\begin{align*}
[\dot S_{j-1}c,L_\ep]\ddj h(x)
&=\sum_{j'\sim j}[\dot S_{j-1}c,\dot\Delta_{j'}L_\ep]\ddj h(x)\\
&=\sum_{j'\sim j}
\int_0^1\!\!\int_{\R^d} \nabla\dot S_{j-1}c((y+\tau (x-y)))\cdot (x-y)\: L_{\ep,j'}(x-y)\ddj h(y)\,dy.    \end{align*}
Hence, for all $j\in\Z,$
\begin{equation}\label{eq:hj0}\|[\dot S_{j-1}c,L_\ep]\ddj h\|_{L^2} \leq  \sum_{j'\sim j}\|z L_{\ep,j'}\|_{L^1}
\|\nabla\dot S_{j-1}c\|_{L^\infty} \|\ddj h\|_{L^2}.\end{equation}
Since we have
$$\cF(z L_{\ep,j'})(\xi)=i2^{-j'}\Bigl(\wh L_\ep(\xi)\nabla\varphi(2^{-j'}\xi)+
\psi(2^{-j'}\xi)\:\xi\cdot\nabla \wh L_\ep (\xi)\Bigr)\with \psi(\eta):= |\eta|^{-2} \eta \varphi(\eta),$$
we get from convolution inequalities and \eqref{eq:Kep} that
\begin{equation}\label{eq:hj}\sup_{\ep,j'} 2^{j'}\|z L_{\ep,j'}\|_{L^1}<\infty.\end{equation}
So we have
$$2^{j\sigma}\|[\dot S_{j-1}c,L_\ep]\ddj h\|_{L^2} \leq C \|\nabla c\|_{L^\infty}
2^{j(\sigma-1)}\|\ddj h\|_{L^2},$$
and it is now easy to  complete the proof of \eqref{com:IIb}. 
\qed

\begin{lem} Assume that $\sigma>-d/2.$
Then  we have
\begin{equation}\label{com:III}
\|[L_\ep\ddj, c]h\|_{L^2}\leq Cc_j2^{-j\sigma}\bigl(\|c\|_{\dot B^{\frac d2+1}_{2,1}}\|h\|_{\dot B^{\sigma-1}_{2,1}}
+\|c\|_{\dot B^{\sigma}_{2,1}}\|h\|_{L^\infty}\bigr)\end{equation}
and the second term is not needed if $\sigma\leq d/2+1.$
\end{lem}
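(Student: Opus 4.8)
The plan is to prove the commutator estimate \eqref{com:III} for $[L_\ep\ddj,c]$ by reducing it, via a Bony decomposition of the product $ch$, to the two commutator estimates already established, namely \eqref{com:II} and \eqref{com:IIb}, together with standard paraproduct continuity. First I would write $ch = T_ch + T'_hc$ using \eqref{eq:bony}, and correspondingly split
$$[L_\ep\ddj,c]h = [L_\ep\ddj,T_c]h + L_\ep\ddj(T'_hc) - c\,L_\ep\ddj h.$$
Actually it is cleaner to decompose $[L_\ep\ddj,c]h = L_\ep\ddj(ch) - c\,L_\ep\ddj h$ and observe that, since $L_\ep$ and $\ddj$ commute (both are Fourier multipliers), $L_\ep\ddj = \ddj L_\ep$, so $[L_\ep\ddj,c]h = \ddj([L_\ep,c]h) + [\ddj,c](L_\ep h)$ — wait, that is not quite an identity either. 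The correct route is: write $L_\ep\ddj(ch) - c\,L_\ep\ddj h = \ddj\bigl(L_\ep(ch)\bigr) - c\,\ddj(L_\ep h)$, then insert and subtract $\ddj\bigl(c\,L_\ep h\bigr)$, giving
$$[L_\ep\ddj,c]h = \ddj\bigl([L_\ep,c]h\bigr) + [\ddj,c](L_\ep h).$$
This is the key algebraic step and it is an exact identity because $L_\ep$ commutes with $\ddj$.

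Once this identity is in hand, I would estimate the two pieces separately. For the first piece, $\|\ddj([L_\ep,c]h)\|_{L^2} \leq c_j 2^{-j\sigma}\|[L_\ep,c]h\|_{\dot B^\sigma_{2,1}}$ by definition of the Besov norm, and then \eqref{com:IIb} applied with exponent $\sigma$ (valid since $\sigma>-d/2$) gives the bound $\lesssim c_j2^{-j\sigma}\bigl(\|c\|_{\dot B^{d/2+1}_{2,1}}\|h\|_{\dot B^{\sigma-1}_{2,1}} + \|h\|_{L^\infty}\|c\|_{\dot B^\sigma_{2,1}}\bigr)$, which is exactly the right-hand side of \eqref{com:III}, with the $L^\infty$ term dropping out precisely when $\sigma\leq d/2+1$ as stated. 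For the second piece $[\ddj,c](L_\ep h)$, I would use the classical commutator estimate \eqref{com:I} — but \eqref{com:I} is stated for commutators with $\nabla z$; here one needs the version $\|[\ddj,c]z\|_{L^2}\lesssim c_j2^{-j\sigma}\|\nabla c\|_{\dot B^{d/2}_{2,1}}\|z\|_{\dot B^{\sigma-1}_{2,1}}$ (the "no-gradient" form, which follows from the same proof in \cite[Chap.~2]{BCD}, writing $z = \nabla\cdot(\text{something})$ is not needed; the standard statement allows $[\ddj,c]$ acting on a general $z$ with a shift). Applying it with $z = L_\ep h$ and using $\|L_\ep h\|_{\dot B^{\sigma-1}_{2,1}}\lesssim \|h\|_{\dot B^{\sigma-1}_{2,1}}$ from \eqref{eq:Kep} yields $\lesssim c_j2^{-j\sigma}\|\nabla c\|_{\dot B^{d/2}_{2,1}}\|h\|_{\dot B^{\sigma-1}_{2,1}}$, which is absorbed into the first term of the right-hand side of \eqref{com:III} since $\|\nabla c\|_{\dot B^{d/2}_{2,1}}\lesssim \|c\|_{\dot B^{d/2+1}_{2,1}}$.

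The main obstacle I anticipate is making sure the "no-$\nabla$" commutator estimate for $[\ddj,c]$ acting on $L_\ep h$ is available in the form needed and with the correct loss of one derivative; if only the $\nabla z$ version \eqref{com:I} is at our disposal, one can instead decompose $[\ddj,c](L_\ep h)$ further via Bony's decomposition ($T_{L_\ep h}c$, $T_c(L_\ep h)$, $R(c,L_\ep h)$) and bound the remainder and the "bad" paraproduct $T'_{L_\ep h}c$ by paraproduct continuity — this costs at most one derivative on $c$ and keeps $h$ at regularity $\sigma-1$, which is consistent with the target. A secondary point to be careful about is tracking the threshold $\sigma\leq d/2+1$: it enters only through the second ($L^\infty$) term, exactly as in \eqref{com:IIb}, so the bookkeeping is inherited and no new constraint on $\sigma$ beyond $\sigma>-d/2$ appears. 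Finally, one should note that all constants are independent of $\ep$ because \eqref{eq:Kep} bounds $L_\ep$ on every $\dot B^s_{2,1}$ uniformly and \eqref{com:IIb} is itself $\ep$-uniform.
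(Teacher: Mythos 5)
Your final argument is correct, but it takes a genuinely different route from the paper. You rest everything on the exact operator identity
\begin{equation*}
[L_\ep\ddj,c]h=\ddj\bigl([L_\ep,c]h\bigr)+[\ddj,c](L_\ep h),
\end{equation*}
which indeed holds because $L_\ep$ and $\ddj$ are both Fourier multipliers, and then you recycle \eqref{com:IIb} for the first piece (this is legitimate: \eqref{com:IIb} is established beforehand and independently) together with the classical ``no-gradient'' Littlewood--Paley commutator estimate for the second piece, using $\|L_\ep h\|_{\dot B^{\sigma-1}_{2,1}}\le\|h\|_{\dot B^{\sigma-1}_{2,1}}$ and $\sup_\ep\|L_\ep\|_{L^1}<\infty$ from \eqref{eq:Kep}. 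The paper instead redoes Bony's decomposition \eqref{eq:bony} directly on $[L_\ep\ddj,c]h$, bounds the two paraproduct remainders by continuity of $T'$, and treats $[L_\ep\ddj,T_c]h$ by a mean-value/kernel argument relying on the uniform bound \eqref{eq:hj}. Your reduction is more economical because the $\ep$-dependent kernel work is entirely outsourced to \eqref{com:IIb}, and the threshold $\sigma\le d/2+1$ for dropping the $L^\infty$ term is inherited verbatim from that lemma; the paper's version is self-contained at the kernel level. Two caveats: your first displayed ``decomposition'' is not an identity (the correct last term would be $T'_h(L_\ep\ddj c)$, not $c\,L_\ep\ddj h$), but you discard it before using it; and the no-gradient estimate $\|[\ddj,c]z\|_{L^2}\lesssim c_j2^{-j\sigma}\|\nabla c\|_{\dot B^{d/2}_{2,1}}\|z\|_{\dot B^{\sigma-1}_{2,1}}$ is not literally \eqref{com:I} as stated, so it does need the separate justification you sketch (either the standard reference or the Bony fallback you describe); with that supplied, the proof is complete.
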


{\bf Proof.} For conciseness, we only treat the case $\sigma\leq d/2+1$
(the easy adaptations for $\sigma>d/2+1$ are left to the reader). 
One can  mimic the proof of \eqref{com:I} proposed in  
\cite{BCD}: using Bony's decomposition \eqref{eq:bony}, we write that 
\begin{equation}\label{eq:decdec} [L_\ep\ddj, c]h= [L_\ep\ddj, T_c]h
+L_\ep\ddj T'_{h}c-T'_{h}L_\ep\ddj c.\end{equation}
For the last term,  we have
$$T'_{h}L_\ep\ddj c=\sum_{|j'-j|\leq1} \dot S_{j+2}h\, \dot\Delta_{j'}L_\ep\ddj c. $$
Hence, since $\sigma-1-d/2\leq0,$ we have, thanks to Bernstein inequality, the definition of
Besov space $\dot B^{\sigma-1-\frac d2}_{\infty,1}$
and embedding $\dot B^{\sigma-1}_{2,1}
\hookrightarrow\dot B^{\sigma-1-\frac d2}_{\infty,\infty},$
$$\begin{aligned}\|T'_{h}L_\ep\ddj c\|_{L^2}&\lesssim \|\dot S_{j+2} h\|_{L^\infty} 
\|\ddj c\|_{L^2}\\&\lesssim 2^{-j(\sigma-1-\frac d2)}\|h\|_{\dot B^{\sigma-1-\frac d2}_{\infty,1}}\|\ddj c\|_{L^2}\\&\lesssim 2^{-j\sigma}\|h\|_{\dot B^{\sigma-1}_{2,1}}
\:2^{j(1+d/2)}\|\ddj c\|_{L^2}.\end{aligned}$$
The last but one term of \eqref{eq:decdec} may be bounded thanks 
to the fact that 
$$T':\dot B^{\sigma-1}_{2,1}\times\dot B^{\frac d2+1}_{2,1}\to \dot B^{\sigma}_{2,1},\qquad
-d/2<\sigma\leq d/2+1.$$
For bounding the first term, one can write that  both $\dot S_{j'-1}c\, h_{j'}$ and $h_{j'}$ are localized
in an annulus of size $2^{j'}.$ Hence, by definition of the paraproduct, we have
$$[L_\ep\ddj, T_c]h=\sum_{j'\sim j} [L_\ep\ddj, \dot S_{j'-1}c] h_{j'}.$$
The mean value formula ensures that for all $x\in\R^d$, we have
$$[L_\ep\ddj, \dot S_{j'-1}c] h_{j'}(x)=\int_{\R^d}\int_0^1L_{\ep,j}(x-y)\:
\nabla\dot S_{j'-1}c(x+\tau(y-x))\cdot(y-x)\:  h_{j'}(y)\,d\tau\,dy,$$
whence
$$\|[L_\ep\ddj, T_c]h\|_{L^2}\leq 
\sum_{j'\sim j}\|z L_{\ep,j}\|_{L^1}\|\nabla\dot S_{j'-1}c\|_{L^\infty}\|h_{j'}\|_{L^2}.$$
Hence, owing to \eqref{eq:hj}
\begin{equation}\label{eq:Lepddj}
  \|[L_\ep\ddj, T_c]h\|_{L^2}\leq Cc_j2^{-j\sigma}\|\nabla c\|_{L^\infty}\|h\|_{\dot B^{\sigma}_{2,1}},  
\end{equation} which, combined with \eqref{eq:keyembedding}, completes the proof of the lemma. 
\qed 

\begin{lem} Let $\sigma\in(-d/2,d/2+1].$
Let $z$ be in $\dot B^{d/2}_{2,1}\cap \dot B^{\sigma}_{2,1}$ and 
$b$ be a scalar function such that  $\nabla b \in \dot B^{d/2}_{2,1}$ and
$\nabla L_\ep b \in \dot B^{\sigma}_{2,1}.$ Then, 
\begin{equation}\label{com:IV}
\|\nabla L_\ep[b,\ddj]z \|_{L^2} \leq Cc_j 2^{-j\sigma}
\bigl(\|\nabla b\|_{\dot B^{\frac d2}_{2,1}}\|z\|_{\dot B^{\sigma}_{2,1}}
+\|z\|_{L^\infty}\|\nabla L_\ep b\|_{\dot B^{\sigma}_{2,1}}\bigr),\qquad j\in\Z.
\end{equation}
Furthermore, the second term is not needed if $-d/2<\sigma\leq d/2.$
\end{lem}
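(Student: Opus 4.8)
The plan is to prove the commutator estimate \eqref{com:IV} by the same paraproduct-plus-Bony-decomposition strategy that was used for \eqref{com:IIb} and \eqref{com:III}, paying attention to the fact that here $\nabla L_\ep$ acts on the \emph{whole} commutator $[b,\ddj]z$, so one should first peel off this operator and only then analyse the commutator structure. First I would write, using Bony's decomposition \eqref{eq:bony} for the product $bz$, the identity
$$[b,\ddj]z = [T_b,\ddj]z + T'_{z}\ddj b - \ddj T'_{z}b + \ddj(T'_z b) - \ddj(T'_z b)$$
— more cleanly, $[b,\ddj]z = [T_b,\ddj]z + \bigl(\ddj(T'_zb)-T'_z\ddj b\bigr)$, so that
$$\nabla L_\ep[b,\ddj]z = \nabla L_\ep[T_b,\ddj]z + \nabla L_\ep\bigl(\ddj(T'_zb)-T'_z\ddj b\bigr).$$
For the paraproduct remainder, note that $\ddj(T'_zb)$ and $T'_z\ddj b$ are both spectrally localized at frequency $\sim 2^j$, so $\nabla L_\ep$ acting on them contributes only a harmless factor $2^j\wh L_\ep(2^j)\lesssim 1$ (by \eqref{eq:Kep}), and these two terms are handled by the standard continuity of $T'$: $T':\dot B^{-d/2}_{\infty,\infty}\times\dot B^{\sigma}_{2,1}\to\dot B^{\sigma}_{2,1}$ together with the commutator-with-$\ddj$ bound of the type in \cite[Chap.~2]{BCD}, giving a contribution $\lesssim c_j2^{-j\sigma}\|z\|_{L^\infty}\|b\|_{\dot B^{\sigma+1}_{2,1}}$; but since we only want $\nabla L_\ep b$, not $\nabla b$, in the second slot, I would instead move $\nabla L_\ep$ \emph{inside} here as well, writing $\nabla L_\ep T'_z b$ in terms of $T'_z \nabla L_\ep b$ up to a commutator $[\nabla L_\ep, T'_z]b$ that is controlled exactly as in \eqref{com:III}.

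The main term is $\nabla L_\ep[T_b,\ddj]z = \sum_{j'\sim j}\nabla L_\ep[\dot S_{j'-1}b,\ddj]z_{j'}$. Here I combine the two operators: since $z_{j'}$ and $\dot S_{j'-1}b\,z_{j'}$ are localized at frequency $\sim 2^j$, one has $\nabla L_\ep[\dot S_{j'-1}b,\ddj]z_{j'} = \bigl(\nabla L_\ep\ddj\bigr)(\dot S_{j'-1}b\,z_{j'}) - \dot S_{j'-1}b\,\bigl(\nabla L_\ep\ddj\bigr)z_{j'}$, and the operator $\nabla L_\ep\ddj$ has convolution kernel $\widetilde L_{\ep,j}:=\cF^{-1}(i\xi\,\wh L_\ep(\xi)\varphi(2^{-j}\xi))$ for which, by the same computation as for \eqref{eq:hj} (differentiate the symbol, use $\xi_k\partial_\ell\wh L_\ep\lesssim\wh L_\ep$ and $\sup_\ep\|L_\ep\|_{L^1}<\infty$ from \eqref{eq:Kep}), one gets $\|z\widetilde L_{\ep,j}\|_{L^1}\lesssim 1$ uniformly in $\ep$ and $j$. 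Then the mean value formula gives
$$[\dot S_{j'-1}b,\nabla L_\ep\ddj]z_{j'}(x)=\int_{\R^d}\!\!\int_0^1 \widetilde L_{\ep,j}(x-y)\,\nabla\dot S_{j'-1}b(x+\tau(y-x))\cdot(y-x)\,z_{j'}(y)\,d\tau\,dy,$$
so $\|[\dot S_{j'-1}b,\nabla L_\ep\ddj]z_{j'}\|_{L^2}\leq \|y\,\widetilde L_{\ep,j}\|_{L^1}\|\nabla\dot S_{j'-1}b\|_{L^\infty}\|z_{j'}\|_{L^2}$, and $\|y\,\widetilde L_{\ep,j}\|_{L^1}\lesssim 2^{-j}$ again by differentiating the symbol and using \eqref{eq:Kep}. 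Summing over $j'\sim j$ with the weight $2^{j\sigma}$ and using $\|\nabla b\|_{L^\infty}\lesssim\|\nabla b\|_{\dot B^{d/2}_{2,1}}$ (embedding \eqref{eq:keyembedding}) yields $\|\nabla L_\ep[T_b,\ddj]z\|_{L^2}\lesssim c_j2^{-j\sigma}\|\nabla b\|_{\dot B^{d/2}_{2,1}}\|z\|_{\dot B^\sigma_{2,1}}$, which is the first term on the right-hand side of \eqref{com:IV}.

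Collecting the three pieces gives \eqref{com:IV}, with the $\|z\|_{L^\infty}\|\nabla L_\ep b\|_{\dot B^\sigma_{2,1}}$ term arising solely from the $T'_z$ paraproduct estimate, which is why it may be dropped when $-d/2<\sigma\leq d/2$: in that range one bounds $T'_z(\nabla L_\ep b)$ by $\|z\|_{\dot B^{\sigma-d/2}_{\infty,\infty}}\|\nabla L_\ep b\|_{\dot B^{d/2}_{2,1}}\lesssim\|z\|_{\dot B^\sigma_{2,1}}\|\nabla L_\ep b\|_{\dot B^{d/2}_{2,1}}$ and then uses $\|\nabla L_\ep b\|_{\dot B^{d/2}_{2,1}}\lesssim\|\nabla b\|_{\dot B^{d/2}_{2,1}}$ from \eqref{eq:Kep}, absorbing it into the first term. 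I expect the only genuinely delicate point to be the bookkeeping for which operator ($\nabla L_\ep$, $\ddj$, or both) sits where after each Bony splitting — in particular making sure that in the remainder terms one always ends up with $\nabla L_\ep b$ rather than $\nabla b$ in the low-regularity slot, so that the hypothesis $\nabla L_\ep b\in\dot B^\sigma_{2,1}$ (and not the stronger $\nabla b\in\dot B^\sigma_{2,1}$) suffices; everything else is a routine transcription of the kernel estimates already carried out for \eqref{com:IIb} and \eqref{com:III}.
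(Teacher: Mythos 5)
Your treatment of the main term $\nabla L_\ep[T_b,\ddj]z$ is essentially sound (the combined kernel $\widetilde L_{\ep,j}$ of $\nabla L_\ep\ddj$ does satisfy $\|y\,\widetilde L_{\ep,j}\|_{L^1}\lesssim 1$ uniformly, which is all you need; your later claim that this quantity is $\lesssim 2^{-j}$ contradicts your own earlier $\lesssim 1$ and is false — the extra factor $|\xi|\sim 2^j$ in the symbol exactly cancels the $2^{-j}$ one gets for the kernel of $L_\ep\ddj$ in \eqref{eq:hj}). The genuine gap is in the remainder terms. You propose to write $\nabla L_\ep T'_zb = T'_z\nabla L_\ep b + [\nabla L_\ep,T'_z]b$ and to control the commutator "exactly as in \eqref{com:III}". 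This does not go through: the proof of \eqref{com:III} rests on the first-order Taylor expansion of the \emph{localized} kernel $L_{\ep,j}$, for which $\|y\,L_{\ep,j}\|_{L^1}\lesssim 2^{-j}$, and that $2^{-j}$ is what pays for the derivative that the expansion transfers onto the low-frequency factor. Here the operator is $\nabla L_\ep$ with an \emph{unlocalized} kernel, so only $\|y\,\nabla L_\ep\|_{L^1}\lesssim 1$ is available, and the Taylor argument applied to $[\nabla L_\ep,\dot S_{j'+2}z]b_{j'}$ yields $\|\nabla \dot S_{j'+2}z\|_{L^\infty}\|b_{j'}\|_{L^2}\lesssim 2^{j'}\|z\|_{L^\infty}\|b_{j'}\|_{L^2}$, i.e. after summation the forbidden quantity $\|z\|_{L^\infty}\|\nabla b\|_{\dot B^\sigma_{2,1}}$ (or, if you put the gradient bound on $z$ differently, $\|\nabla z\|_{L^\infty}$, which is not among the hypotheses). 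Avoiding precisely this loss — ending up with $\nabla L_\ep b$ rather than $\nabla b$ at regularity $\sigma$ — is the whole point of the lemma, and your sketch does not show how to achieve it; one can rescue the term by redistributing the regularity ($\|\nabla\dot S_{j'+2}z\|_{L^\infty}\lesssim 2^{j'(1+d/2-\sigma)}\|z\|_{\dot B^\sigma_{2,1}}$ against $\|b_{j'}\|_{L^2}\lesssim 2^{-j'(1+d/2)}\|\nabla b\|_{\dot B^{d/2}_{2,1}}$), but this only closes for $\sigma>0$ and is in any case a different argument from the one you invoke.

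For comparison, the paper sidesteps the problem entirely by applying the Leibniz rule \emph{before} any Bony decomposition: $\nabla L_\ep[b,\ddj]z = L_\ep[b,\ddj]\nabla z + L_\ep[\nabla b,\ddj]z$. The first piece is the classical commutator \eqref{com:I} applied to $\nabla z$, composed with the uniformly $L^2$-bounded operator $L_\ep$; the second is expanded as $L_\ep(\nabla b\,z_j)-\ddj L_\ep T'_{\nabla b}z - T_zL_\ep\ddj\nabla b + [T_z,L_\ep\ddj]\nabla b$, where the only term carrying a full derivative of $b$ at regularity $\sigma$ is $T_zL_\ep\ddj\nabla b = T_z\,L_\ep\nabla b_j$ — with $L_\ep$ already sitting on $\nabla b_j$, whence the norm $\|\nabla L_\ep b\|_{\dot B^\sigma_{2,1}}$ — and the residual commutator $[T_z,L_\ep\ddj]\nabla b$ enjoys the localized-kernel gain $\|y\,L_{\ep,j}\|_{L^1}\lesssim 2^{-j}$. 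I would recommend restructuring your proof along these lines, or at minimum replacing the appeal to \eqref{com:III} by an explicit estimate of $[\nabla L_\ep,T'_z]b$ that produces only the admissible norms.
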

\noindent 
{\bf Proof.}
We start with  the decomposition
$$\begin{aligned}
\nabla L_\ep[b,\ddj]z &= L_\ep[b,\ddj]\nabla z+  L_\ep[\nabla b,\ddj]z\\
&= L_\ep[b,\ddj]\nabla z   +  L_\ep (\nabla b \, z_j) - \ddj L_\ep(\nabla b\, z)\\
&=R_j^{11}+R_j^{12}+R_j^{13}+R_j^{14}+R_j^{15}\end{aligned}$$
with $R_j^{11}:= L_\ep[b,\ddj]\nabla z,\quad\!\!
 R_j^{12}:= L_\ep (\nabla b \, z_j),\quad\!\!
  R_j^{13}:=- \ddj L_\ep T'_{\nabla b}z,\quad\!\!
 R_j^{14}:=-T_{z} L_\ep \ddj\nabla b$ and 
 $R_j^{15}:= [T_z,L_\ep\ddj ] \nabla b.$
 \medbreak\noindent
Since $0\leq\wh L_\ep\leq1,$  the commutator estimate \eqref{com:I}  ensures that
$$\|R_j^{11}\|_{L^2}\leq \|[b,\ddj]\nabla z\|_{L^2}
\leq Cc_j 2^{-j\sigma}\|\nabla b\|_{\dot B^{\frac d2}_{2,1}}
\|z\|_{\dot B^{\sigma}_{2,1}}.$$
Next, owing to \eqref{eq:keyembedding},
$$
\|R_j^{12}\|_{L^2}\leq \|\nabla b\, z_j\|_{L^2}\leq \|\nabla b\|_{L^\infty}
\|z_j\|_{L^2}\leq Cc_j 2^{-j\sigma}\|\nabla b\|_{\dot B^{\frac d2}_{2,1}}
\|z\|_{\dot B^{\sigma}_{2,1}}
$$
 and, because  $T':\dot B^{\frac d2}_{2,1}\times\dot B^{\sigma}_{2,1}\to \dot B^{\sigma}_{2,1}$
 for $\sigma>-d/2,$ we have
$$\|R_j^{13}\|_{L^2}\leq c_j2^{-j\sigma}\|T'_{\nabla b}\, z\|_{\dot B^{\sigma}_{2,1}}\leq Cc_j2^{-j\sigma}
\|\nabla b\|_{\dot B^{\frac d2}_{2,1}}\|z\|_{\dot B^{\sigma}_{2,1}}.$$
 Next, since 
 $$R_{j}^{14}=\sum_{|j'-j|\leq1}
 \dot S_{j'-1} z\: L_\ep \dot\Delta_{j'} \nabla b_j,$$
 we have 
 $$
  \|R_{j}^{14}\|_{L^2}\leq C\| z\|_{L^\infty}\|L_\ep\nabla b_j\|_{L^2}\leq Cc_j2^{-j\sigma}\| z\|_{L^\infty}\|L_\ep\nabla b\|_{\dot B^\sigma_{2,1}}.
  $$
  Note that if $\sigma\leq d/2,$ then we also have for $|j'-j|\leq1,$ 
 $$\| \dot S_{j'-1} z\|_{L^\infty}\lesssim 2^{-j(\sigma-d/2)}\|z\|_{\dot B^{\sigma-d/2}_{\infty,1}}
 \lesssim 2^{-j\sigma}\|z\|_{\dot B^{\sigma}_{2,1}} $$
so that 
  $$
  \|R_{j}^{14}\|_{L^2}\leq Cc_j2^{-j\sigma}\|\nabla b\|_{\dot B^{\frac d2}_{2,1}}\|z\|_{\dot B^{\sigma}_{2,1}}.  $$
 The term  $R_j^{15}$ may be treated by a small variation of \eqref{eq:Lepddj}.  We get
$$\|R_{j}^{15}\|_{L^2}\leq Cc_j2^{-j\sigma}\|\nabla b\|_{\dot B^{\frac d2}_{2,1}}\|\nabla z\|_{B^{\sigma-1-\frac d2}_{\infty,1}}.$$
In the end, remembering the embedding $B^{\sigma-1}_{2,1}\hookrightarrow 
B^{\sigma-1-\frac d2}_{\infty,1},$ we obtain  \eqref{com:IV}.

\qed

 {\bf An alternative proof of  \eqref{com:IV}. } 
Inequality \eqref{com:IV}  can be alternatively demonstrated by means of an  integral representation. In contrast to employing the para-decomposition, our approach necessitates the explicit elucidation of the paramount terms that engender limitations on the regularity. This particular scenario demands a more intricate analysis, wherein we focus on a three-dimensional space and impose constraints on the regularity with $\sigma = 5/2$.

Note that $\nabla L_\ep[b,\ddj]\div u=  L_\ep[ \nabla b,\ddj]\div u + L_\ep[b,\ddj] \nabla \div u$. The second part can be easily treated by the commutator rule (\ref{com:I})
$$
    2^{(d/2+1)j}\|L_\ep[b,\ddj] \nabla \div u\|_{L^2} \lesssim
    c_j \|\nabla b\|_{L^\infty} \|\div u\|_{\dot B^{d/2+1}_{2,1}} \with \sum_j c_j =1.
$$
The first part can be seen as follows
$$
   L_\ep[ \nabla b,\ddj]\div u = 
   L_\ep \nabla b \ddj \div u -    L_\ep \ddj (\nabla b \. \div u).
$$
Above, the first term is bounded by
$$
    2^{(d/2+1)j}\|L_\ep \nabla b \ddj \div u\|_{L^2} \lesssim 
    \|\nabla b\|_{L^\infty}
    2^{(d/2+1)j} \|\nabla u_j\|_{L^2}.
$$
Since $(\ddj \nabla b) \div u$ is of a good form and
$$
    2^{(d/2+1)j}\|L_\ep (\ddj \nabla b) \div u\|_{L^2} \lesssim 
    2^{(d/2+1)j}\|\ddj \nabla b\|_{L^2} \|\div u\|_{L^\infty}, 
$$
the most difficult term we  consider in the following form 
$$
    L_\ep \ddj (\nabla b \.\div u) - L_\ep (\ddj \nabla b) \div u.
$$
Let $L_\ep^j = \dot \Delta_j L_\ep$, then we restate the above term
\begin{multline*}
    \int_{\R^d} L^j_\ep (z) \nabla b(x-z) (\div u (x-z) -\div u(x))dz \\=
    \int_{\R^d} L^j_\ep (z) \nabla b(x-z)
    \left[\nabla^2u (x) z + \int_0^1 (\nabla ^2u(x-tz) -\nabla^2u(x)) z\, dt \right]  dz = K_1 +K_2.
\end{multline*}
Let fix $d=3$. In order to get the general case, it is required to apply the induction method to get the bound for arbitrary dimension.
First, we  find the bound for 
$$
    2^{5/2 j} \| K_1\|_{L^2} \leq 2^{5/2 j }\| \int_{\R^3} z L^j_\ep (z) \nabla b(x-z) dz \|_{L^2} \|\nabla^2u\|_{L^\infty} \leq C 2^{5/2 j}\|b_j\|_{L^2} \|\nabla^2u\|_{\dot B^{3/2}_{2,1}}.
$$
Note that by definition
$$
  \int_{\R^3} z L^j_\ep (z) \nabla b(x-z) dz = \int_{\R^3} z L_\ep (z) \nabla \ddj b(x-z) dz,  
$$
hence  by (\ref{eq:Kep})
$$
    \|\int_{\R^3} z L^j_\ep (z) \nabla b(x-z) dz \|_{L^2} \lesssim 
    \| \xi  \partial_{\xi}\hat L(\xi)
    \hat{b_j}\|_{L^2} \lesssim \|b_j\|_{L^2}.
$$

The term $K_2$ still needs to be restated. So (we  use the transform $(t,s)\to (ts,s)$)
\begin{multline*}
    K_2 = \int_{\R^3} z L^j_\ep(z) \nabla b(x-z) \int_0^1 \int_0^1 
    \nabla^3u(x-ts z) tz \, ds dt dz \\
    =\int_{\R^3} z^2 L^j_\ep(z) \nabla b(x-z) [\nabla^3 u(x) +\int_0^1
    (\nabla^3u(x-s z)   - \nabla^3u(x) )\,ds  ]dz = K_{21} + K_{22}.
\end{multline*}
And then 
\begin{multline*}
    2^{5/2 j} \| \int_{\R^3} z^2 L^j_\ep (z) \nabla b(x-z) dz     \nabla^3 u(x)\|_{L^2} \leq  
    2^{5/2 j }
    \|\int_{\R^3} z^2 L^j_\ep (z) \nabla b(x-z) dz \|_{L^6} \|\nabla^3 u\|_{L^3} \\
    \leq C 2^{5/2 j } 2^{-j} \|L_\ep b_j\|_{L^6} \|\nabla^3u\|_{L^3} \leq
    C 2^{5/2 j} \| L_\ep b_j\|_{L^2} \|\nabla^3 u \|_{B^{1/2}_{2,1}}.
\end{multline*}
Note the above case requires the highest regularity in both terms.

Term $K_{22}$ requires some more care. In an direct way we get
\begin{multline*}
    2^{5/2 j} \| \int_{\R^3} z^2 L^j_\ep (z) \nabla b(x-z) 
    \int_0^1 (\nabla^3u(x-s z) -\nabla^3 u (x))\, ds dz\|_{L^2} \\ \leq 
    2^{5/2 j} \| \int_{\R^3} z^2 L^j_\ep (z) \nabla b(x-z) 
    \int_0^1 \frac{(\nabla^3u(x-s z) -\nabla^3 u (x))}{(s|z|)^{1/2}}  s^{1+1/2} |z|^{1/2}\,\frac{|z|ds}{s|z|}  dz\|_{L^2} \\ \leq 
    C 2^{5/2 j} \| z^{2+1/2} L^j_\ep \|_{L^1} 
    \|\nabla b\|_{L^\infty} 
   \sup_{z\in \R^3} \int_0^\infty \frac{\| \nabla^3u (x - h \hat{e}_z) -
    \nabla^3 u(x)\|_2} {h^{1/2}} \frac{dh}{h}\\ \leq 
    C \|\nabla b\|_{L^\infty} \|\nabla^3 u\|_{B^{1/2}_{2,1}}.
\end{multline*}
By the assumption (\ref{eq:Kep}) we easily deduce that
$$ 2^{5/2 j}\|z^{5/2} L_\ep^j\|_{L^1} \leq \mbox{uniformly  bounded in} j \andf \ep.$$
The right-hand side is independent of $j$. The $\ell^1$ summability is required. So we proved the existence of a map from 
$B^{1/2}_{2,1}(\R^3) \to B^{5/2}_{2,\infty}(\R^3)$, but it is not enough. 
Fortunately we can use interpolation. Note that $1/2$ can be replaced by any $\alpha$ close to $1/2$, a bit bigger and a bit smaller, then we get the map
    $T: \dot B^{\alpha}_{2,1}(\R^3) \to \dot B^{\alpha}_{2,\infty}(\R^3),$
so then
\begin{equation*}
    T: \dot B^{1/2}_{2,1}(\R^3)= (\dot B^{1/2-\sigma}_{2,1}(\R^3);
    \dot B^{1/2 +\sigma}_{2,1} (\R^3))_{1/2,1} \to 
(\dot B^{1/2-\sigma}_{2,\infty}(\R^3);
    \dot B^{1/2 +\sigma}_{2,\infty}(\R^3))_{1/2,1} = 
    \dot B^{1/2}_{2,1}(\R^3)
\end{equation*}
with the suitable desired estimates. We are done.  This approach takes more space, but in some special cases can deliver faster answers concerning the limitation on the required regularity. \qed

\begin{lem} There exists a constant $C$ independent of $j\in\Z$ and $\ep>0$
such that the following inequality holds:
\begin{equation}\label{com:V}
\|[L_\ep,c]\ddj z\|_{L^2} \leq C2^{-j}\|\nabla c\|_{L^\infty}\|\ddj z\|_{L^2}.
\end{equation}
\end{lem}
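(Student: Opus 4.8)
The plan is to decompose the factor $c$ into its low- and high-frequency parts relative to the dyadic scale $2^j$ of $\ddj z$, and to treat the two resulting pieces by completely different, elementary arguments. I would write $c=\dot S_{j-N_0}c+\widetilde c_j$, where $N_0$ is a fixed integer chosen so that $\dot S_{j-N_0}c$ is spectrally supported in $\{|\xi|\le 2^{j-2}\}$ and $\widetilde c_j:=(\Id-\dot S_{j-N_0})c=\sum_{k\ge j-N_0}\ddk c$. The whole gain $2^{-j}$ originates in the elementary bound $\|\ddk c\|_{L^\infty}\lesssim 2^{-k}\|\nabla c\|_{L^\infty}$ (Bernstein's inequality applied to $\ddk\nabla c$), which upon summation over $k\ge j-N_0$ gives $\|\widetilde c_j\|_{L^\infty}\lesssim 2^{-j}\|\nabla c\|_{L^\infty}$.

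For the high-frequency piece there is no commutator structure to exploit: one simply writes $[L_\ep,\widetilde c_j]\ddj z=L_\ep(\widetilde c_j\,\ddj z)-\widetilde c_j\,L_\ep(\ddj z)$ and, using that $L_\ep$ is a contraction on $L^2$ (since $0\le\wh L_\ep\le1$), bounds this by $2\|\widetilde c_j\|_{L^\infty}\|\ddj z\|_{L^2}\lesssim 2^{-j}\|\nabla c\|_{L^\infty}\|\ddj z\|_{L^2}$. For the low-frequency piece $[L_\ep,\dot S_{j-N_0}c]\ddj z$ I would observe that both $\ddj z$ and the product $\dot S_{j-N_0}c\,\ddj z$ are spectrally localized in the annulus $\{2^{j-2}\le|\xi|\le 2^{j+2}\}$, so that $L_\ep$ acts on each of them through the truncated kernel $L_\ep^{(j)}:=\cF^{-1}\bigl(\wh L_\ep\,\widetilde\varphi(2^{-j}\cdot)\bigr)$, where $\widetilde\varphi$ is a fixed smooth function supported in an annulus and equal to $1$ on $\{1/4\le|\xi|\le4\}$. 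This collapses the commutator to $\int_{\R^d}L_\ep^{(j)}(y)\,\bigl(\dot S_{j-N_0}c(x-y)-\dot S_{j-N_0}c(x)\bigr)\,(\ddj z)(x-y)\,dy$; a first-order Taylor expansion of $\dot S_{j-N_0}c$ about $x$ together with the Minkowski and Young inequalities then bounds its $L^2$ norm by $\|\nabla c\|_{L^\infty}\bigl(\int_{\R^d}|y|\,|L_\ep^{(j)}(y)|\,dy\bigr)\|\ddj z\|_{L^2}$, after noting that $\|\nabla\dot S_{j-N_0}c\|_{L^\infty}\lesssim\|\nabla c\|_{L^\infty}$.

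The only substantial input — and essentially the only place where the structural hypotheses \eqref{eq:Kep} are used — is the uniform bound $\sup_{\ep,j}\,2^{j}\int_{\R^d}|y|\,|L_\ep^{(j)}(y)|\,dy<\infty$. I would not redo any kernel analysis for this: writing $\widetilde\varphi(2^{-j}\cdot)$ as a finite sum of cutoffs $\varphi(2^{-(j+k)}\cdot)$ with $|k|$ bounded by an absolute constant, one has $L_\ep^{(j)}=\sum_{k}L_{\ep,j+k}$ in the notation of the proof of \eqref{com:IIb}, and the desired estimate then follows at once from \eqref{eq:hj}. Assembling the two pieces yields \eqref{com:V}. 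I expect the only delicate point to be purely bookkeeping: tracking the frequency supports carefully enough (hence the shift by $N_0$) to justify replacing $L_\ep$ by $L_\ep^{(j)}$ simultaneously on $\ddj z$ and on the product $\dot S_{j-N_0}c\,\ddj z$.
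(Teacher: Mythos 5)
Your proof is correct and follows essentially the same route as the paper's: the same low/high frequency splitting of $c$ at the scale $2^j$, the same crude $L^\infty$ bound $\|(\Id-\dot S_{j-N_0})c\|_{L^\infty}\lesssim 2^{-j}\|\nabla c\|_{L^\infty}$ combined with the $L^2$-boundedness of $L_\ep$ for the high-frequency part, and the same mean-value argument for the low-frequency commutator, resting on the uniform bound \eqref{eq:hj} for $2^{j'}\|z\,L_{\ep,j'}\|_{L^1}$. The only differences (the shift $N_0$ in the cut-off and writing the second commutator out as two products) are cosmetic.
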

{\bf Proof.} It is based on the decomposition
\begin{equation}\label{eq:decompo}[L_\ep,c]\ddj z = [L_\ep,\dot S_{j-1}c]\ddj z
+L_\ep\bigl((\Id-\dot S_{j-1})c\, \ddj z\bigr)
-(\Id-\dot S_{j-1})c\, L_\ep\ddj z,
\end{equation}
and on the fact that, in light of the properties of localization of 
$\ddj$ and $\dot S_{j-1},$ we have
$$[L_\ep,\dot S_{j-1}c]\ddj z=\sum_{j'\sim j}\,[L_\ep\dot\Delta_{j'},\dot S_{j-1}c]\ddj z.$$
Hence, \eqref{eq:hj0} and \eqref{eq:hj} guarantee that 
$$\|L_\ep,\dot S_{j-1}c]\ddj z\|_{L^2}\leq 2^{-j}\|\nabla\dot S_{j-1}c\|_{L^\infty}\|\ddj z\|_{L^2}$$
and, since
$$\|(\Id-\dot S_{j-1})c\|_{L^\infty} \leq C2^{-j}\|\nabla c\|_{L^\infty},$$
the other two terms of \eqref{eq:decompo} also satisfy 
the desired inequality.
\qed

\begin{lem}
We have for $\sigma\in(-d/2-1, d/2+1],$
\begin{multline}\label{eq:Rj2a}
\| \nabla L_\ep\ddj(v\cdot\nabla a)-v\cdot\nabla (\nabla  L_\ep \ddj a)\|_{L^2}\\
\leq Cc_j2^{-j\sigma}
\bigl(\|a\|_{\dot B^{\sigma}_{2,1}}\|v\|_{\dot B^{\frac d2+2}_{2,1}}
+\|v\|_{\dot B^{\frac d2+1}_{2,1}}
\|\nabla L_\ep a\|_{\dot B^{\sigma}_{2,1}}\bigr)\cdotp
\end{multline}
\end{lem}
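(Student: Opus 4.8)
The plan is to prove \eqref{eq:Rj2a} by rewriting the left-hand side as a sum of commutator pieces that can each be estimated via the integral representation used throughout this appendix, following the same mean-value-formula strategy as in the proof of \eqref{com:IV}. First I would introduce the localized kernel $L_{\ep,j}:=\dot\Delta_jL_\ep$ and use Bony's decomposition \eqref{eq:bony} to split $v\cdot\nabla a$ into paraproduct terms. Writing $v\cdot\nabla a=T_v\cdot\nabla a+T'_{\nabla a}\cdot v=T_{v^m}\partial_m a+\partial_m(T'_{a}v^m)-T'_{\partial_m a}v^m$ modulo the usual rearrangements, I would then track how $\nabla L_\ep\dot\Delta_j$ interacts with each term. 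The benign pieces (those where $\nabla L_\ep\dot\Delta_j$ falls on a function already of the right frequency size, so that one can use \eqref{eq:Kep} and Bernstein) give bounds of the form $c_j2^{-j\sigma}\|v\|_{\dot B^{d/2+1}_{2,1}}\|\nabla L_\ep a\|_{\dot B^\sigma_{2,1}}$ directly via the paraproduct continuity results of \cite[Chap.~2]{BCD}.

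The heart of the matter is the diagonal term $\sum_{j'\sim j}[\nabla L_\ep\dot\Delta_j,\dot S_{j'-1}v^m]\partial_m a_{j'}$, together with the analogous piece coming from the transport structure that must cancel against $v\cdot\nabla(\nabla L_\ep\dot\Delta_j a)$. For this I would apply the mean value formula \emph{to second order} — exactly as in the $K_1$, $K_2$, $K_{21}$, $K_{22}$ decomposition of the alternative proof of \eqref{com:IV} — expanding $\dot S_{j'-1}v^m(x)-\dot S_{j'-1}v^m(y)$ to obtain a first-order term controlled by $\|\nabla v\|_{L^\infty}$ paired against the kernel moment $\|zL_{\ep,j}\|_{L^1}\lesssim 2^{-j}$ from \eqref{eq:hj}, plus a remainder term controlled by $\|\nabla^2v\|_{L^\infty}$ paired against $\|z\otimes z\,L_{\ep,j}\|_{L^1}$. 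The key new kernel estimate needed is
\begin{equation*}
\sup_{\ep>0,\,j\in\Z} 2^{2j}\|(z\otimes z)\,L_{\ep,j}\|_{L^1}<\infty,
\end{equation*}
which follows from the third condition in \eqref{eq:Kep} (the $(z\otimes z)\nabla^2L_\ep\in L^1$ bound and the $\xi_k\partial_\ell\wh L_\ep\lesssim\wh L_\ep$ estimate) by the same Fourier-side manipulation that gave \eqref{eq:hj}: one writes $\cF((z\otimes z)L_{\ep,j})$ in terms of $\nabla^2\varphi(2^{-j}\cdot)$, $\xi\cdot\nabla\wh L_\ep$ and $\xi\otimes\xi:\nabla^2\wh L_\ep$ with appropriate $2^{-2j}$ factors. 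This is why the statement requires \emph{two} extra derivatives of $v$, i.e.\ $v\in\dot B^{d/2+2}_{2,1}$, whereas the classical Euler commutator only needs one.

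I expect the main obstacle to be the bookkeeping in the second-order Taylor remainder: after the change of variables $(t,s)\mapsto(ts,s)$ one is left with a term of the shape $\int z^2L_{\ep,j}(z)\,\nabla v(x-z)\int_0^1(\nabla^2\cdots)(x-sz)\,ds\,dz$, and controlling it in $L^2$ with the right power of $2^j$ requires either the $L^\infty$ bound $\|\nabla^2v\|_{L^\infty}\lesssim 2^{-j(\sigma+1)}\cdots$ together with the kernel moment, or — when $\sigma$ is large — an interpolation argument exactly of the type carried out at the end of the alternative proof of \eqref{com:IV} (proving a map $\dot B^\alpha_{2,1}\to\dot B^\alpha_{2,\infty}$ for $\alpha$ near the critical value and interpolating up to $\ell^1$ summability). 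Once all pieces are assembled and the $2^{j\sigma}$-weighted $\ell^1$ sum over $j$ is taken, collecting the two types of bounds yields precisely the right-hand side of \eqref{eq:Rj2a}, with the first term absorbing all contributions where derivatives land on $v$ at top order and the second term absorbing those where $\nabla L_\ep a$ carries the $\sigma$-regularity. The constraint $\sigma\in(-d/2-1,d/2+1]$ is dictated by the range of validity of the paraproduct estimates used for the off-diagonal terms.
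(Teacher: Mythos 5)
Your architecture matches the paper's: Bony decomposition, paraproduct continuity for the off-diagonal pieces (which is indeed what dictates $\sigma\in(-d/2-1,d/2+1]$), and a second-order Taylor expansion of $\dot S_{j'-1}v$ in the diagonal commutator, with the remainder paired against $\|\nabla^2 v\|_{L^\infty}$ and a second kernel moment — whence the two derivatives of $v$ on the right-hand side. The second-order part is essentially correct, up to the detail that the kernel of $\nabla L_\ep\ddj$ is $\nabla L_{\ep,j}$, so the moment actually required is $\|(z\otimes z)\nabla L_{\ep,j}\|_{L^1}\lesssim 2^{-j}$, deduced from the hypothesis $\|(z\otimes z)\nabla^2L_\ep\|_{L^1}<\infty$ in \eqref{eq:Kep}, rather than a moment of $L_{\ep,j}$ itself.

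The genuine gap is in the first-order term of the Taylor expansion. As you describe it — $\|\nabla v\|_{L^\infty}$ paired with a kernel moment and (implicitly) a norm of $\nabla a_{j'}$ — Young's inequality can only produce $\|\nabla v\|_{L^\infty}\|\nabla a_{j'}\|_{L^2}$, hence after summation $\|v\|_{\dot B^{\frac d2+1}_{2,1}}\|a\|_{\dot B^{\sigma+1}_{2,1}}$. This is strictly stronger than what \eqref{eq:Rj2a} asserts, which features only $\|\nabla L_\ep a\|_{\dot B^{\sigma}_{2,1}}$; and in the application ($\sigma=d/2+1$) the norm $\|a\|_{\dot B^{\frac d2+2}_{2,1}}$ is \emph{not} controlled uniformly in $\ep$ — obtaining the $L_\ep$-smoothed quantity is the entire point of the lemma. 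The missing idea is to transfer the convolution by $L_\ep$ from the kernel onto $a_{j'}$ \emph{before} applying Young. Writing $L_{\ep,j}=L_\ep\star h_j$ with $h_j=\cF^{-1}(\varphi(2^{-j}\cdot))$, the paper uses the identity
$$z\,\partial_kL_{\ep,j}\star\partial_\ell a_{j'}=(zh_j)\star L_\ep\partial_k\partial_\ell a_{j'}-\partial_k(zh_j)\star\partial_\ell L_\ep a_{j'},$$
so that both pieces are bounded by exact-scaling moments of $h_j$ (namely $\|zh_j\|_{L^1}=2^{-j}\|zh_0\|_{L^1}$ and $\|\partial_k(zh_j)\|_{L^1}=\|\partial_k(zh_0)\|_{L^1}$) times $\|L_\ep\nabla a_{j'}\|_{L^2}$. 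Without this transfer the first-order term does not close, and your interpolation remark at the end does not repair it, since the loss is in the $\ep$-dependence of the norm on $a$, not in the $\ell^1$ summability over $j$.
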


{\bf Proof.}
Using again Bony's decomposition \eqref{eq:bony} and the fact that   $\dot S_{j'-1}v\cdot\nabla a_{j'}$ is localized in an annulus of size $2^{j'},$ we may write 
$$
\begin{aligned}
\nabla L_\ep\ddj (v\cdot\nabla a)&=\nabla L_\ep \ddj T'_{\nabla a}\cdot v+\nabla L_\ep \ddj T_v\cdot\nabla a\\
&=\nabla L_\ep \ddj T'_{\nabla a}\cdot v+\nabla L_\ep \ddj \sum_{j'\sim j} \dot S_{j'-1} v \cdot \nabla a_{j'}\\
&=R_j^{21}+R_j^{22}+R_j^{23}+R_j^{24}+v\cdot\nabla(\nabla L_\ep a_j)\end{aligned}
$$
$$\begin{aligned}
\with&R_j^{21}:=\nabla L_\ep T'_{\nabla a}\cdot v,\qquad R_j^{22}:=\nabla L_\ep \ddj \sum_{j'\sim j} (\dot S_{j'-1}-\dot S_{j-1})v \cdot \nabla a_{j'},\\ 
&R_j^{23}:= \sum_{j'\sim j}[\nabla L_\ep \ddj,\dot S_{j-1}v] \cdot \nabla a_{j'}\andf
R_j^{24}:=(\dot S_{j-1}-\Id)v \cdot(\nabla(\nabla L_\ep a_j)).\end{aligned}$$
For  $R_j^{21},$  we use that  $T':\dot B^{\sigma-1}_{2,1}\times \dot B^{\frac d2+2}_{2,1}\to \dot B^{\sigma+1}_{2,1}$ for $-d/2-1<\sigma\leq d/2+1$ and that 
$\nabla L_\ep:\dot B^{\sigma+1}_{2,1}\to \dot B^{\sigma}_{2,1}$ uniformly with respect to $\ep$
to  get
\begin{equation}\label{eq:Rj21}
\|R_j^{21}\|_{L^2}
\leq Cc_j2^{-j\sigma}\|\nabla a\|_{\dot B^{\sigma-1}_{2,1}}\|v\|_{\dot B^{\frac d2+2}_{2,1}}.
\end{equation}
Next, by Bernstein inequality and the fact that $0\leq\wh L_\ep\leq1,$
we have
$$\begin{aligned}
\|R_j^{22}\|_{L^2}&\lesssim 2^j \sum_{j''\sim j'\sim j}
2^{-2j''-j'(\sigma-1)}\bigl(2^{2j''}\|v_{j''}\|_{L^\infty}\bigr) \bigl(2^{j'(\sigma-1)}\|\nabla a_{j'}\|_{L^2}\bigr)\\
&\lesssim 2^{-j\sigma} \|v\|_{\dot B^2_{\infty,\infty}}
\sum_{j'\sim j}\bigl(2^{j'(\sigma-1)}\|\nabla a_{j'}\|_{L^2}\bigr)\cdotp
\end{aligned}$$
Hence we have, owing to embedding $\dot B^{\frac d2+2}_{2,1}\hookrightarrow \dot B^2_{\infty,\infty},$
\begin{equation}\label{eq:Rj22}
\|R_j^{22}\|_{L^2} \leq Cc_j2^{-j\sigma}\|v\|_{\dot B^{\frac d2+2}_{2,1}}
\|\nabla a\|_{\dot B^{\sigma-1}_{2,1}}.\end{equation}
To bound  $R_j^{24},$ we use the fact that
$$
\|(\dot S_{j-1}-\Id)v\|_{L^\infty}\lesssim 2^{-j}
 \|v\|_{\dot B^1_{\infty,\infty}}.$$
Hence, combining with  the  embedding $\dot B^{\frac d2+1}_{2,1}\hookrightarrow \dot B^1_{\infty,\infty}$ and Bernstein inequality, 
\begin{equation}\label{eq:Rj24}
\|R_j^{24}\|_{L^2}\leq C\|v\|_{\dot B^{\frac d2+1}_{2,1}}
\|\nabla L_\ep a_j\|_{L^2}.
\end{equation}
To handle  $R_j^{23},$ we have to go to the
second order in the Taylor expansion. Using again the notation $L_{\ep,j}=\cF^{-1}(\wh L_\ep\,\varphi(2^{-j}\cdot)),$
we write that for all $k\in\{1,\cdots,d\},$ we have, with the summation convention,
$$\displaylines{
[\partial_kL_\ep\ddj,\dot S_{j-1}v^\ell]\partial_\ell a_{j'}
= R_{jj'k}^{231}+ R_{jj'k}^{232}\cr
\with R_{jj'k}^{231}(x):=
\int_{\R^d}\partial_k L_{\ep,j}(y-x)\: (y-x)\cdot \nabla\dot S_{j-1}v^\ell(x)
\:\partial_\ell a_{j'}(y)\,dy\andf\cr
R_{jj'k}^{232}(x):=\int_{\R^d}\biggl(\int_0^1(1-\tau)
D^2\dot S_{j-1}v^\ell(x+\tau(y-x))(y-x,y-x)\,d\tau\biggr)
\partial_k L_{\ep,j}(x-y)\:\partial_\ell a_{j'}(y)\,dy.}$$
First, by using H\"older inequality, we have
$$\|R_{jj'k}^{231}\|_{L^2}\leq 
\|\nabla\dot S_{j-1}v^\ell\|_{L^\infty}
\|z\partial_k L_{\ep,j}\star\partial_\ell a_{j'}\|_{L^2}.$$
Denoting $h_j:=\cF^{-1}(\varphi(2^{-j}\cdot)),$ we have the identity
$$z\partial_{z_k}(L_\ep\star h_j)= \partial_{z_k}L_\ep\star (z h_j)
- L_\ep\star \partial_{z_k}(z h_j)$$
and thus 
$$z \partial_k L_{\ep,j}\star \partial_\ell a_{j'}
= (z h_j)\star L_\ep\partial_k\partial_\ell a_{j'}
- \partial_k(z h_j)\star \partial_\ell L_\ep a_{j'}.$$
Since 
$$\|zh_j\|_{L^1}=2^{-j}\|z h_0\|_{L^1}\andf 
\|\partial_k(z h_j)\|_{L^1}= \|\partial_k(z h_0\|_{L^1},$$
we deduce (using once Bernstein inequality) that
\begin{equation}\label{eq:Rj231}
   \|R_{jj'k}^{231}\|_{L^2}\lesssim \|\nabla v\|_{L^\infty}\|L_\ep\nabla a_{j'}\|_{L^2}. \end{equation}
For the other term, we have
$$\|R_{jj'k}^{232}\|_{L^2}\lesssim \|\nabla^2 v\|_{L^\infty}
\|(z\otimes z)\nabla L_{\ep,j}\|_{L^1}\|\nabla a_j\|_{L^2},$$
and one can show that
$$\|(z\otimes z)\nabla L_{\ep,j}\|_{L^1}\lesssim 2^{-j}
\|(z\otimes z)\nabla^2 L_{\ep}\|_{L^1}.$$
Indeed, if we set $\wt\varphi(\xi)=-i\xi |\xi|^{-2}\varphi(\xi)$
and $\wt h_0:=\cF^{-1}\wt\varphi,$ then $h_0=\div \wt h_0,$
and thus $\wt h_j=2^{-j}\div\wt h_j$ for all $j\in\Z.$
Consequently, we have
$$(z\otimes z)\nabla L_{\ep,j}= 2^{-j}(z\otimes z)(\Delta L_\ep \star  \wt h_j).$$ 
Hence 
\begin{equation}\label{eq:Rj232}\|R_{jj'k}^{232}\|_{L^2}\lesssim \|\nabla^2 v\|_{L^\infty}\|a_{j'}\|_{L^2}.\end{equation}
Putting \eqref{eq:Rj231} and \eqref{eq:Rj232} together yields
\begin{equation}\label{eq:Rj23}
\|R_j^{23}\|_{L^2}
\leq C\sum_{j'\sim j} 
\bigl(\|\nabla v\|_{L^\infty}\|L_\ep\nabla a_{j'}\|_{L^2}
+\|\nabla^2 v\|_{L^\infty}\|a_{j'}\|_{L^2}\bigr).
\end{equation}
Hence, one can conclude from  \eqref{eq:Rj21},
\eqref{eq:Rj22}, \eqref{eq:Rj24} and \eqref{eq:Rj23} that \eqref{eq:Rj2a} holds true. 
\qed 


\subsection{The general pressure case}\label{s:pressure}

Here we  explain how to close the estimates for all time in the general pressure case, 
that is for System \eqref{Kepsilon-PP}.
Denoting $a:=\rho-1,$ this corresponds to System \eqref{eq:eulerconvbis} with $v=u,$ $b=a$ and $c=F(K_\ep a)$
with $F(z):= \cN(1+z)-1.$
We assume that\footnote{The second assumption can be achieved after renormalization.} 
\begin{equation}\label{eq:F}F(0)=0\andf F'(0)=1. \end{equation}
We plan to use  Inequality \eqref{eq:linear} with $\sigma=d/2+1.$ 
Note that, at some point, we will have to bound the $L^\infty$ norm of $c_t+\div((1+c)v)$ with $c=F(K_\ep a).$
To do this, we observe from the first equation of \eqref{Kepsilon-PP} that
$$\partial_t(K_\ep(a))+\div\bigl((1\!+\!K_\ep a)u\bigr) 
= \cR_\ep:=  K_\ep a \,\div u + \sum_j [u^j,\partial_j K_\ep]a +(\Id-K_\ep)\div u,$$
whence 
$$\partial_t(F(K_\ep a))+\div\bigl((1\!+\!F(K_\ep a))u\bigr) = \bigl(1\!+\! F(K_\ep a)-(1\!+\!K_\ep a)F'(K_\ep a)\bigr)\div u +F'(K_\ep a) \cR_\ep. $$
Under condition \eqref{eq:smallbc}  with $b=a$ and thanks to hypothesis \eqref{eq:F}, it is obvious that
$$\|\bigl(1\!+\! F(K_\ep a)-(1\!+\!K_\ep a)F'(K_\ep a)\bigr)\div u\|_{L^\infty}\lesssim \|\div u\|_{L^\infty}\|a\|_{L^\infty}.$$
Next, using first order Taylor formula, we readily get
$$\Bigl\|\sum_j [u^j,\partial_j K_\ep]a\Bigr\|_{L^\infty}\leq \|z\nabla K_\ep\|_{L^1} \|\nabla u\|_{L^\infty}\|a\|_{L^\infty}.$$
Hence, keeping  Assumption \eqref{eq:Kep} in mind and assuming e.g. that $|a|\leq 1/4$, we conclude that 
\begin{equation}\label{eq:ct}
\bigl\|\partial_t(K_\ep(a))+\div\bigl((1\!+\!K_\ep a)u\bigr) \bigr\|_{L^\infty}\lesssim\|\nabla u\|_{L^\infty}.
\end{equation}
Now, denoting $X:=X^{\frac d2+1}_{a,u}$ and $H:=H^{\frac d2+1}_{a,u},$ using \eqref{eq:ct}
and observing that in the case $v=u$ and $b=a$ all the terms in lines two and 
three are of type  $\|\nabla u\|_{\dot B^{\frac d2}_{2,1}\cap
\dot B^{\frac d2+1}_{2,1}}X,$ Inequality \eqref{eq:linear} with $\sigma=d/2+1$ reduces to 
\begin{multline*}   
X(t)+\int_0^tH\,d\tau \lesssim X(0)+\int_0^t \|\nabla u\|_{\dot B^{\frac d2}_{2,1}\cap
\dot B^{\frac d2+1}_{2,1}}X\,d\tau\\
 +\!\int_0^t\!\Bigl(\|F(K_\ep a)\|_{\dot B^{\frac d2}_{2,1}}
 \bigl(\|\nabla L_\ep a^h\|_{\dot B^{\frac d2+1}_{2,1}} +\|\nabla K_\ep a^\ell\|_{\dot B^{\frac d2+1}_{2,1}}
 +\|\nabla^2 K_\ep a^\ell\|_{\dot B^{\frac d2+1}_{2,1}}\bigr) \\
 +\|F(K_\ep a)\|_{\dot B^{\frac d2+1}_{2,1}}\bigl(\|\nabla L_\ep a^h\|_{\dot B^{\frac d2}_{2,1}} +\|\nabla K_\ep a^\ell\|_{\dot B^{\frac d2}_{2,1}} +\|\nabla^2 K_\ep a^\ell\|_{\dot B^{\frac d2}_{2,1}}\bigr)\\
 +\|F(K_\ep a)\|_{\dot B^{\frac d2+1}_{2,1}}\|(L_\ep a,\nabla L_\ep a)\|_{\dot B^{\frac d2+1}_{2,1}}
  +\|F(K_\ep a)\|_{\dot B^{\frac d2+2}_{2,1}}\|\nabla L_\ep a\|_{\dot B^{\frac d2}_{2,1}}\Bigr)d\tau.\end{multline*}
Since $F(0)=0,$ the right-hand side may be simplified thanks to the following composition inequality that is
valid whenever $\|z\|_{L^\infty}$ is small enough and $s>0$:
$$\|F(z)\|_{\dot B^s_{2,1}}\lesssim \|z\|_{\dot B^s_{2,1}}.$$
In the end, after a few simplifications, we discover that
\begin{multline*}   
X(t)+\int_0^tH\,d\tau \lesssim X(0)+\int_0^t \|\nabla u\|_{\dot B^{\frac d2}_{2,1}\cap
\dot B^{\frac d2+1}_{2,1}}X\,d\tau\\
 +\!\int_0^t\!\Bigl(\|K_\ep a\|_{\dot B^{\frac d2}_{2,1}}
 \bigl(\|\nabla L_\ep a^h\|_{\dot B^{\frac d2+1}_{2,1}} +\|\nabla K_\ep a^\ell\|_{\dot B^{\frac d2+1}_{2,1}}
 +\|\nabla^2 K_\ep a^\ell\|_{\dot B^{\frac d2+1}_{2,1}}\bigr) \\
 +\|K_\ep a\|_{\dot B^{\frac d2+1}_{2,1}}\bigl(\|L_\ep a\|_{\dot B^{\frac d2+1}_{2,1}} +\|\nabla K_\ep a^\ell\|_{\dot B^{\frac d2+1}_{2,1}}\bigr)\\
 +\|K_\ep a\|_{\dot B^{\frac d2+1}_{2,1}}\|\nabla L_\ep a\|_{\dot B^{\frac d2+1}_{2,1}}
  +\|K_\ep a\|_{\dot B^{\frac d2+2}_{2,1}}\|\nabla L_\ep a\|_{\dot B^{\frac d2}_{2,1}}\Bigr)d\tau.\end{multline*}
We observe that all the products  in the integrals of the right-hand side may be bounded
by $HX$ except, maybe,  
$$\|K_\ep a\|_{\dot B^{\frac d2+1}_{2,1}}^\ell\|L_\ep a\|_{\dot B^{\frac d2+1}_{2,1}}^\ell
\andf \|K_\ep a\|_{\dot B^{\frac d2+1}_{2,1}}^\ell\|\nabla L_\ep a\|_{\dot B^{\frac d2+1}_{2,1}}^\ell.$$ 
However, by Cauchy-Schwarz inequality in the Fourier space and the fact that $K_\ep =L_\ep\ast L_\ep$, we notice that
$$\begin{aligned}\|K_\ep a\|_{\dot B^{\frac d2+1}_{2,1}}^\ell&\leq \|L_\ep a\|_{\dot B^{\frac d2+1}_{2,1}}^\ell\leq 
\sqrt{\|a\|_{\dot B^{\frac d2}_{2,1}}^\ell\|\nabla^2 K_\ep a\|_{\dot B^{\frac d2}_{2,1}}^\ell}\leq \sqrt{XH}\\
\|\nabla L_\ep a\|_{\dot B^{\frac d2+1}_{2,1}}^\ell&\leq 
\sqrt{\|a\|_{\dot B^{\frac d2+1}_{2,1}}^\ell\|\nabla^2 K_\ep a\|_{\dot B^{\frac d2+1}_{2,1}}^\ell}\leq \sqrt{XH}.\end{aligned}$$
Hence, we conclude that for some universal constant $C\geq1,$ we have for all $t\geq0$
$$X(t)+\int_0^t H\,d\tau \leq C\biggl(X(0)+\int_0^t HX\,d\tau\biggr)\cdotp$$
Now, provided $2C^2 X(0)<1,$ we get the following global-in-time and uniform in $\ep$ control: 
$$X(t)+\frac12\int_0^t H\,d\tau \leq CX(0).$$
Granted with  the above a priori estimate, remembering Remark \ref{r:fundamental}
and mimicking the proof of Theorem \eqref{thm:GWP}, 
one ends up with the following global uniform  well-posedness result for 
System \eqref{Kepsilon-PP}.
\begin{thm}
Let $\cN$ be any smooth  function defined  
on some neighborhood of $1$ and such that $\cN(1)=\cN'(1)=1.$
Assume that $d\geq2$ and take initial data $\rho_0=1+a_0$ and $u_0$
such that  
$$u_0\in \dot B^{\frac d2}_{2,1}\cap \dot B^{\frac d2+2}_{2,1},
\quad a_0\in \dot B^{\frac d2-1}_{2,1}\cap \dot B^{\frac d2+1}_{2,1}\andf
\nabla^2L_\ep a_0 \in \dot B^{\frac d2}_{2,1}.$$
There exists an absolute  positive constant $\alpha_0$ such that if 
\begin{equation*}
\|u_0\|_{\dot B^{\frac d2+1}_{2,1}\cap \dot B^{\frac d2+2}_{2,1}}
+\|a_0\|_{\dot B^{\frac d2}_{2,1}\cap \dot B^{\frac d2+1}_{2,1}}
+\|\nabla^2L_\ep a_0 \|_{\dot B^{\frac d2}_{2,1}}\leq\alpha_0,
\end{equation*}
then System \eqref{Kepsilon-PP} with $\f=1$ supplemented with initial data 
$(\rho_0,u_0)$ admits a unique global classical
solution $(\rho,u)$ such that  $(a,u)$ with 
$a:=\rho-1$  belongs to the space $E_{K_\ep}^{\frac d2+1}$
defined in \eqref{def:E}. Furthermore, $a\in\cC(\R_+;\dot B^{\frac d2-1}_{2,1})$
and Inequality \eqref{est:1} is satisfied.
%
\end{thm}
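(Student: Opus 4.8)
The plan is to combine the closed a priori estimate just derived for \eqref{Kepsilon-PP} with the construction scheme of Theorem~\ref{thm:GWP}. Setting $a:=\rho-1$ and $c:=F(K_\ep a)$ with $F(z):=\cN(1+z)-1$ — so that $F(0)=0$ and $F'(0)=1$, since $\cN(1)=\cN'(1)=1$ — and using $K_\ep\ast 1=\wh K_\ep(0)=1$, System~\eqref{Kepsilon-PP} with $\f=1$ is exactly \eqref{eq:eulerconvbis} taken in the quasilinear regime $v=u$, $b=a$, $c=F(K_\ep a)$, $f=g=0$. The argument preceding the statement shows that any smooth enough solution with $|a|\le 1/4$ satisfies $X(t)+\int_0^t H\,d\tau\le C\bigl(X(0)+\int_0^t HX\,d\tau\bigr)$ with $X:=X^{\frac d2+1}_{a,u}$, $H:=H^{\frac d2+1}_{a,u}$ and $C$ independent of $\ep$; hence, if $\alpha_0$ is chosen small enough that $2C^2X(0)<1$, a standard bootstrap gives the global bound $X(t)+\frac12\int_0^t H\,d\tau\le CX(0)$, which is precisely \eqref{est:1}.

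Granted this estimate, I would reproduce Steps~1--5 of the proof of Theorem~\ref{thm:GWP} almost verbatim, under the same mild assumption $\nabla K_\ep\in\cL(H^s;H^{s+1})$ (recall \eqref{eq:Mep}). The only genuine change is in the local construction of smooth solutions: one solves the damped Burgers equation $u_t+u\cdot\nabla u+u=-\cN(K_\ep\wt a)\nabla K_\ep\wt a$ coupled with $a_t+\div((1+a)u)=0$ by the same fixed-point argument as in Steps~1--2. Because $\cN$ is smooth near $1$, $\nabla K_\ep$ is smoothing, and $K_\ep\wt a$ stays small on a short time interval, composition and product estimates in Sobolev spaces make $\cN(K_\ep\wt a)\nabla K_\ep\wt a$ a smooth source term, so the local existence and the blow-up criterion $T^*<\infty\Rightarrow\int_0^{T^*}(\|a\|_{L^\infty}+\|\nabla u\|_{L^\infty})\,dt=\infty$ carry over unchanged. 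Feeding the a priori bound into this criterion yields global smooth solutions $(a^{(n)},u^{(n)})$ for regularized data, bounded in $E^{\frac d2+1}_{K_\ep}$ uniformly in $n$ and $\ep$, and the usual weak-$*$/local-strong compactness argument of Step~5 produces a solution $(a,u)\in E^{\frac d2+1}_{K_\ep}$ of \eqref{Kepsilon-PP} obeying \eqref{est:1}.

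The step I expect to be the main obstacle is the upgrade $a\in\cC(\R_+;\dot B^{\frac d2-1}_{2,1})$, which is exactly why the statement carries the extra hypothesis $a_0\in\dot B^{\frac d2-1}_{2,1}$. Transporting $a$ at regularity $d/2-1$ through $a_t+u\cdot\nabla a=-(1+a)\div u$ requires $u$ and $\nabla K_\ep a$ to be time-integrable at the level $\dot B^{\frac d2-1}_{2,1}$, and estimate \eqref{eq:LF10} does not supply this once $c=F(K_\ep a)$: its last term $\int_0^t\|c\|_{\dot B^{\frac d2}_{2,1}}\|\nabla K_\ep a\|_{\dot B^{\sigma-1}_{2,1}}\,d\tau$ has the prefactor $\|c\|_{\dot B^{\frac d2}_{2,1}}\lesssim\|a\|_{\dot B^{\frac d2}_{2,1}}$, which is merely bounded in time. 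I would circumvent this exactly as in Remark~\ref{r:fundamental}: use \eqref{eq:LF9} at regularity level $\sigma-1$ rather than \eqref{eq:LF10}, which does absorb the offending linear terms and gives $u$ and $\nabla K_\ep a$ in $L^1(\R_+;\dot B^{\frac d2-1}_{2,1})$; then the Fourier Cauchy--Schwarz bound $\|L_\ep a_j\|_{L^2}\le\|a_j\|_{L^2}^{1/2}\|K_\ep a_j\|_{L^2}^{1/2}$ upgrades this to $L_\ep a\in L^2(\R_+;\dot B^{\frac d2-1}_{2,1})$. Combined with the $L^1$-in-time control of $\nabla^2 K_\ep a$ built into $E^{\frac d2+1}_{K_\ep}$, this places $(1+a)\div u$ in $L^1(\R_+;\dot B^{\frac d2-1}_{2,1})$ and $\nabla u$ in $L^1(\R_+;\dot B^{\frac d2}_{2,1})$, so the standard transport theory delivers the claimed time continuity of $a$.

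Uniqueness, finally, follows as in the uniqueness part of Theorem~\ref{thm:GWP}: for two solutions the difference $(\da,\du)$ solves a system of type \eqref{eq:eulerconv} with $v=u_1$, $b=a_1$, whose source terms now also contain $F(K_\ep a_2)-F(K_\ep a_1)$; since $F$ is smooth and the $K_\ep a_i$ are small, this term is handled by the mean value theorem and the product law \eqref{eq:prod1} with a prefactor that is locally integrable in time, so \eqref{eq:linear} at $\sigma=d/2$ together with Gronwall's lemma forces $\da\equiv\du\equiv0$ — and, more generally, yields stability with respect to the data. Apart from this last time-integrability difficulty, which Remark~\ref{r:fundamental} removes, no new ingredient beyond Sections~\ref{s:linear} and \ref{s:GWP} is needed.
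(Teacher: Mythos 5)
Your proposal is correct and follows essentially the same route as the paper: the closed a priori estimate derived just before the statement, followed by the existence/uniqueness scheme of Theorem \ref{thm:GWP} with the source term $\cN(K_\ep\ast\rho)\nabla K_\ep\ast\rho$ handled by composition estimates, and the time-integrability issue at low regularity resolved exactly as in Remark \ref{r:fundamental} (which is also why the paper, like you, imposes $a_0\in\dot B^{\frac d2-1}_{2,1}$). The paper leaves the existence, uniqueness and continuity details to the reader, and the ones you supply are consistent with its intent; the only quibble is that applying \eqref{eq:LF9} at level $\sigma-1=d/2$ actually yields $u,\nabla K_\ep a\in L^1(\R_+;\dot B^{\frac d2}_{2,1})$ rather than $\dot B^{\frac d2-1}_{2,1}$, which is harmless for your subsequent use.
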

In this general pressure setting, it  is also possible to prove  convergence to the compressible Euler
System \eqref{Euler-P} and  asymptotic results when the friction coefficient $\f$ tends to $\infty,$
in the spirit of Theorems \ref{thm:conv}, \ref{thm:fric1},  \ref{thm:fric2} and \ref{thm:fric3}. 
The details are left to the reader.


\subsection{From the micro to the macro scale}

In this part we aim at sketching the connection between \eqref{eq:boltz} and \eqref{Kepsilon}. 
In case the number $N$ of particles is large in \eqref{eq:boltz}, it is customary to treat the distribution of particles in terms of  measures. By  performing the so-called mean field limit, we are led to the 
following kinetic  equation:
\begin{equation}\label{eq:kin}
f_t +v\cdot \nabla_x f + \div_v (F(f)f)=0
\end{equation}
where, for some suitable  kernel $K_\ep,$
\begin{equation*}
    F(f)(t,x,v)= \f v +\int_{\R^d}
    \nabla K_\ep(x-y)f(t,y,w)\,dy.
\end{equation*}
Note  that the solution to \eqref{eq:boltz} may be seen as 
a \emph{measure solution} to \eqref{eq:kin}. Indeed, 
 the weak formulation of \eqref{eq:kin} reads, 
for all test function $\phi \in \mathcal{D}(\R^d \times [0,T)),$ 
\begin{equation*}
    \int_0^T \int_{\R^d}\! \int_{\R^d} 
    f (\partial_t + v\cdot \nabla_x + F(f)\nabla_v) \phi \,dx\,dv\,dt =- \int_{\R^d} \int_{\R^d} f_0(x,v)\phi(0,x,v)\,dx\,dv.
\end{equation*}
Hence, if we set $$f := \sum_k  \delta_{x_k(t)} \otimes 
\delta_{v_k(t)},$$
then we have \begin{multline*}
\sum_k     \int_0^T \int_{\R^d}\! \int_{\R^d} 
    \left(\partial_t + v_k(t)\cdot \nabla_x + F(\sum_l \delta_{x_l(t)} \otimes 
\delta_{v_l(t)})\nabla_v \right) \phi(x_k(t),v_k(t))\,dx\,dv
\\
=- \int_{\R^d} \int_{\R^d} f_0(x,v)\phi(0,x,v)\,dx\,dv,
\end{multline*}
since 
\begin{equation*}
    \frac{d}{dt} \phi(t, x_k(t),v_k(t))=\partial_t \phi+
    \frac{dx_k}{dt} \cdot \nabla_x \phi + \frac{dv_k}{dt} \cdot \nabla_v \phi.
\end{equation*}
Next,  let us  explain  how \eqref{Kepsilon} can be obtained from \eqref{eq:kin}.
The idea is to assume that we are in  the \emph{mono-kinetic regime}, namely
\begin{equation}\label{mono-kin}
    f(t,x,v) = \rho(t,x) \otimes \delta_{v=u(t,x)}
\end{equation}
for some nonnegative function $\rho(t,x)$ and vector-field $u(t,x).$ In other words,  all the particles at point $x$ at time  $t$ have the same velocity $u(t,x),$ and their density is $\rho(t,x)$. Then, first integrating over  the $v$-coordinate the equation (\ref{eq:kin}) we obtain the simple continuity law
\begin{equation}\label{eq:kin1}
    \rho_t + \div(\rho u) =0.
\end{equation}
Second,  multiplying \eqref{eq:kin} by $v$  and  integrating over $v$ gives:
$$\begin{aligned}
    0&=\int_{\R^d} vf_t\, dv  + \int_{\R^d} v \otimes v\cdot \nabla_x f \,dv - \int_{\R^d} v \,\div_v ((\f v + \nabla K_\ep \ast_x \rho)f) \,dv
    \\&=
    \frac{d}{dt} \int_{\R^d} vf\, dv  + \div_x \int_{\R^d} v \otimes v \, f \,dv + d\int_{\R^d}  (\f v + \nabla K_\ep \ast_x \rho \, f)  \,dv.
\end{aligned}$$
Taking into account the ansatz \eqref{mono-kin} we obtain then
\begin{equation}\label{eq:kin2}
    \partial_t(\rho u) + \div (\rho u \otimes u)+ d\f \rho u + d\rho \nabla K_\ep \ast \rho=0.
\end{equation}
After a suitable rescaling of the constant parameters, using \eqref{eq:kin1} to (\ref{eq:kin2}) we obtain the original system \eqref{Kepsilon}.


\bigbreak

\noindent
\textbf{Acknowledgement:} The paper has been partly supported by the Polish National Science Centre’s Grant No. 2018/30/M/ST1/00340 (HARMONIA).


\end{document}